\newtheorem{theorem}{Theorem}[section]
\newtheorem{lemma}[theorem]{Lemma}
\newtheorem{proposition}[theorem]{Proposition}
\newtheorem{corollary}[theorem]{Corollary}
\theoremstyle{definition}
\newtheorem{definition}[theorem]{Definition}
\newtheorem{remark}[theorem]{Remark}
\newcommand{\IR}{\mathbb{R}}
\newcommand{\IC}{\mathbb{C}}
\newcommand{\IN}{\mathbb{N}}
\newcommand{\IZ}{\mathbb{Z}}
\newcommand{\Lop}{{\mathcal{L}}}
\newcommand{\Isom}{{\mathrm{Isom}}}
\newcommand{\Sec}{\mathrm{S}}
\newcommand{\cX}{\mathcal{X}}
\newcommand{\cA}{\mathcal{A}}
\newcommand{\cC}{\mathcal{C}}
\newcommand{\cE}{\mathcal{E}}
\newcommand{\cL}{\mathcal{L}}
\newcommand{\fa}{\mathfrak{a}}
\newcommand{\cS}{\mathcal{S}}
\newcommand{\cR}{\mathcal{R}}
\newcommand{\fu}{\mathfrak{u}}
\newcommand{\ff}{\mathfrak{f}}
\newcommand{\ba}{{\bar a}}
\newcommand{\bb}{{\bar b}}
\newcommand{\bbf}{{\bar f}}
\newcommand{\bg}{{\bar g}}
\newcommand{\bh}{{\bar h}}
\newcommand{\bu}{{\bar u}}
\newcommand{\bv}{{\bar v}}
\def\da{\delta\!a}
\def\db{\delta\!b}
\def\df{\delta\!f}
\def\dg{\delta\!g}
\def\dh{\delta\!h}
\def\du{\delta\!u}
\def\dv{\delta\!v}
\renewcommand{\L}{\mathrm{L}}
\newcommand{\B}{\mathrm{B}}
\newcommand{\E}{\mathrm{E}}
\newcommand{\F}{\mathrm{F}}
\newcommand{\C}{\mathrm{C}}
\newcommand{\W}{\mathrm{W}}
\renewcommand{\d}{\mathrm{d}}
\newcommand{\e}{\mathrm{e}}
\newcommand{\eps}{\varepsilon}
\newcommand{\wt}{\widetilde}
\DeclareMathOperator{\divergence}{div}
\DeclareMathOperator{\diva}{div_A}
\DeclareMathOperator{\curl}{curl}
\DeclareMathOperator{\adj}{adj}
\renewcommand{\Re}{\operatorname{Re}}
\DeclareMathOperator{\Id}{Id}
\newcommand{\dom}{\mathcal{D}}
\newcommand{\with}{\quad\hbox{with}\quad}
\newcommand{\andf}{\quad\hbox{and}\quad}
\numberwithin{equation}{section}
\title[The compressible Navier--Stokes system in critical spaces]
{Critical regularity issues for  the compressible Navier--Stokes system in 
bounded domains}
\author{Rapha\"el Danchin}
\author{Patrick Tolksdorf}
\address{Universit\'e Paris-Est, LAMA, UMR 8050, 61 Avenue du G\'en\'eral de Gaulle, 94010 Cr\'eteil, Cedex, France}
\address{Institut f\"ur Mathematik, Johannes Gutenberg-Universit\"at Mainz, Staudingerweg 9, 55099 Mainz, Germany}
\email{danchin@univ-paris12.fr}
\email{tolksdorf@uni-mainz.de}
\thanks{}
\date{\today}
\begin{document}
\begin{abstract}
We are concerned with the barotropic compressible Navier--Stokes system
in a bounded domain of $\IR^d$ (with $d\geq2$).  In a  \emph{critical regularity setting}, 
we establish local well-posedness 
for large data with no vacuum and global well-posedness for small 
perturbations of a stable constant equilibrium state.

Our results rely on new maximal regularity estimates - of independent interest - for the 
semigroup of  the Lam\'e operator, and of 
the linearized compressible Navier--Stokes equations. 
\end{abstract}
\maketitle

\section{Introduction}

We are concerned with the following \emph{barotropic compressible Navier--Stokes system}
in a $\cC^\infty$ 
 bounded domain $\Omega$ of $\IR^d$, $d \geq 2$:
\begin{equation}\label{eq:CNS}
\left\{\begin{aligned}
&\partial_t\rho+ \divergence(\rho u)=0\quad&\hbox{in }\ \IR_+\times\Omega,\\
&\partial_t(\rho u) +\divergence(\rho u\otimes u) -2\divergence(\mu D(u))-\nabla(\lambda\divergence u)
+\nabla P=0&\quad\hbox{in }\ \IR_+\times\Omega,\\
&u=0&\quad\hbox{on }\ \IR_+\times\partial\Omega,\\
&(\rho,u)|_{t=0}=(\rho_0,u_0)&\quad\hbox{in }\ \Omega.
\end{aligned}\right.
\end{equation}
The unknowns are the (scalar nonnegative) density $\rho=\rho(t,x)$
and the vector-field $u=u(t,x).$ 
The notation $D(u)$ stands for the symmetric part of the Jacobian matrix of $u.$
The viscosity coefficients $\lambda$ and $\mu$ are  smooth functions 
of $\rho$ satisfying $\mu>0$ and  $\lambda+2\mu>0.$
We shall often assume  (with no loss of generality)
that  the average value of the density on $\Omega,$ a conserved quantity,  is  equal to  $1.$ 
\medbreak
The mathematical study of the Cauchy problem (or initial boundary value problem)
for the compressible Navier--Stokes system  has been initiated sixty years
ago with the pioneering works by J.\@ Serrin~\cite{Serrin} and J.\@ Nash~\cite{Nash}
who established the local-in-time existence and uniqueness of classical solutions. 
In the case $\Omega=\IR^3,$ the global existence  of strong solutions 
with Sobolev regularity has been first proved by A.\@ Matsumura and T.\@ Nishida 
\cite{Matsumura_Nishida}, for small perturbations 
of a constant state $(\rho,u)=(\bar\rho,0)$ under the stability condition $P'(\bar\rho)>0.$
The proof was based on subtle energy estimates that enabled the authors
to pinpoint some $\L^2$-in-time integrability for both the density and the velocity, 
as well as algebraic time decay estimates. 

With completely different methods based  on parabolic maximal regularity in the framework of Lebesgue spaces, local existence has been established by V.\@ Solonnikov~\cite{Solonnikov}
for general data with no vacuum (see also the more recent work by  
the first author~\cite {Danchin} where critical regularity is almost achieved)
as well as global existence for small perturbations of $(\bar\rho,0)$
(see
~\cite{MZ02}, and~\cite{Kotschote}). 
\medbreak
In the present paper, we want to  recover the classical results of strong solutions 
 for~\eqref{eq:CNS} in the bounded domain case  \emph{within a critical regularity setting}, that is,
 in functional spaces that are invariant by the following rescaling for all $\ell>0$:
 \begin{equation}\label{eq:scaling} \bigl(\rho_0(x),u_0(x)\bigr)
\leadsto \bigl(\rho_0(\ell x),\ell u_0(\ell x)\bigr)\andf  \bigl(\rho(t,x),u(t,x)\bigr)\leadsto 
\bigl(\rho(\ell^2t,\ell x),\ell u(\ell^2t,\ell x)\bigr)\cdotp
\end{equation}
Observe that the above rescaling  leaves the whole system 
invariant, up to a change of the pressure term (provided the fluid domain is dilated accordingly,
of course). As first noticed by H.\@ Fujita and T.\@ Kato in~\cite{FK} for the incompressible
Navier--Stokes equations, working in scaling invariant spaces is the key to getting
optimal well-posedness results.  
\medbreak
Our main goal here  is to prove the following type of statements:
\begin{itemize}
\item local well-posedness for  general  data having critical regularity
and such that $\rho_0>0$; 
\item if, in addition, $P'(1)>0,$ global well-posedness 
for data $(\rho_0,u_0)$ that are small perturbations of $(1,0)$ (for some  norm
having the invariance of the first part of~\eqref{eq:scaling}). 
\end{itemize}
When the fluid domain is  the whole space,
a number of results in that spirit have been 
established, and the critical norms are  always
      built upon homogeneous Besov spaces \emph{with last index equal to~$1$.} 
 More precisely,  it has been first observed in~\cite{Danchin0}
      that one can take any data such that  $\rho_0-1$ is small in
        $\dot \B^{d/2-1}_{2,1}(\IR^d)\cap \dot \B^{d/2}_{2,1}(\IR^d),$
        and $u_0$ is small in $\dot \B^{d/2-1}_{2,1}(\IR^d).$
        Later works (see, e.g.,~\cite{Charve_Danchin},~\cite{Chen_Miao_Zhang})
        pointed out that it is actually enough  to  assume  the high frequencies of 
     the data to be in the larger space $\dot \B^{d/p}_{p,1}(\IR^d)\times \dot \B^{d/p-1}_{p,1}(\IR^d)$
     for some $p$ in the range $\bigl(2,\min(4,\frac{2d}{d-2})\bigr)\cdotp$ 
          \medbreak
         Here we aim at extending those results to the
     physically relevant case where the fluid domain is bounded and the velocity
     vanishes at the boundary. Compared to works in the whole space, 
     the expected  difficulty  is that one can no longer  
      use  techniques based on the Fourier transform  to investigate~\eqref{eq:CNS} 
     (in particular, global results of~\cite{Danchin0} were based  on a decomposition 
     into low and high frequencies of the solution). 
      Whether one can adapt those techniques to the domain case
      is unclear.      
     In the present paper, we focus on the bounded domain case which is expected
     to be easier than the unbounded domain case since, somehow, 
     low frequencies do not  exist 
    (therefore,  prescribing different regularity for low and high frequencies is irrelevant). 
      
 Since the linearized compressible Navier--Stokes system may be associated to 
 an analytic semigroup in suitable functional spaces, using maximal $\L^q$-regularity 
 seems to be an acceptable substitute to Fourier analysis. However, as already pointed out
 in previous works (see, e.g.,~\cite{Danchin}), reaching critical regularity 
 within the classical theory would require \emph{maximal $\L^1$-regularity}, 
 which is false in the setting of Lebesgue or
 Sobolev spaces for instance.
\smallbreak 
 For the reader's convenience, let us  briefly recall what maximal regularity is. 
Let $X$ be a Banach space and  $- A : \dom(A) \subset X \to X,$  the generator of a bounded analytic semigroup $(T(t))_{t\geq0}$ on  $X.$ Consider for $f \in \L^q (\IR_+ ; X)$, $1 \leq q \leq \infty$, the abstract Cauchy problem
\begin{align*}
\left\{ \begin{aligned}
 u^{\prime} (t) + A u(t) &= f (t) \qquad (t > 0), \\
 u(0) &= 0.
\end{aligned} \right.
\end{align*} 
By virtue of~\cite[Prop.~3.1.16]{Arendt_Batty_Hieber_Neubrander} the unique mild solution to this problem is given by the variation of constants formula
\begin{align*}
 u(t) = \int_0^t T(t-\tau) f(\tau) \; \d\tau \qquad (t > 0).
\end{align*}
We say that $A$ has \emph{maximal $\L^q$-regularity} if, for every $f \in \L^q (\IR_+ ; X),$ it holds for almost every  $t > 0$ that $u(t) \in \dom(A),$ and $A u \in \L^q (\IR_+ ; X)$. Notice that in this case also $u^{\prime} \in \L^q (\IR_+ ; X)$ and that the closed graph theorem implies the existence of a constant $C > 0$ such that for all $f \in \L^q (\IR_+ ; X)$ it holds
\begin{align*}
 \| u^{\prime} , A u \|_{\L^q (\IR_+ ; X)} \leq C \| f \|_{\L^q (\IR_+ ; X)}.
\end{align*}
See the monographs of Denk, Hieber, and Pr\"uss~\cite{Denk_Hieber_Pruess} and of Kunstmann and Weis~\cite{Kunstmann_Weis} for further information. Our aim here is to adapt  an argument of real interpolation that originates from  
Da Prato-Grisvard's work in~\cite{DaPrato-Grisvard} so as to
reach the endpoint $q=1$ that turns out to be the key to proving global-in-time
results in critical regularity framework (in this respect, see also our recent paper \cite{DHMT}). 

We perform the analysis first  for the semigroup associated to the Lam\'e operator 
(namely the linearization of the velocity equation if neglecting the pressure term), 
so as to get a local well-posedness result for general data
with critical regularity, then  for the linearization of the whole system~\eqref{eq:CNS} about $(\rho,u)=(1,0)$ to obtain a global result. 
 
Back to the nonlinear system, one cannot just push all nonlinear terms 
to the right-hand side and  bound them according to Duhamel's formula, though. 
The troublemaker is the convection 
term in the density equation, namely $u\cdot\nabla\rho,$ that causes
a loss of one derivative (this reflects the fact that the system under
consideration is partly hyperbolic). The way to overcome the difficulty is well-known:
it is called Lagrangian coordinates. Indeed, if rewriting 
\eqref{eq:CNS} in Lagrangian coordinates, then one just has to consider
the evolution equation for the velocity which is of parabolic type. 
Therefore, not only the loss of derivative may be avoided, 
but also the solution may be obtained (either locally for large data, or globally for small data) 
by means of the contraction mapping argument in Banach spaces. 
\medbreak
Let us now come to  the main results of the paper. 
\begin{theorem}\label{Thm:local}
Assume that $\Omega$ is a smooth bounded domain of $\IR^d$
($d\geq2$) and let $p$ be in $(d-1,2d).$
Then,  for all initial densities $\rho_0\in\B^{d/p}_{p,1}(\Omega),$
 positive and bounded away from zero,  and all $u_0\in\B^{d/p-1}_{p,1}(\Omega;\IR^d),$ 
System~\eqref{eq:CNS} admits a unique  solution $(\rho,u)$
on some nontrivial time interval $[0,T],$  such that 
$(\rho,u)\in\cC_b([0,T];\B^{d/p}_{p,1} (\Omega)\times \B^{d/p-1}_{p,1} (\Omega ; \IR^d))$ and
$$
 (\rho , u) \in \W^{1 , 1} (\IR_+ ; \B^{{d}/{p}}_{p,1} (\Omega) \times\B^{{d}/{p} - 1}_{p,1} (\Omega ; \IR^d)) \cap \L^1 (\IR_+ ; \B^{{d}/{p}}_{p,1} (\Omega)\times\B^{{d}/{p} + 1}_{p,1} (\Omega ; \IR^d))\cdotp $$
Furthermore, $\inf_{(t,x)\in[0,T]\times\Omega} \:\rho(t,x)>0$
and the average of $\rho$ is time independent. 
\end{theorem}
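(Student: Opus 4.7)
The plan is to reformulate System~\eqref{eq:CNS} in Lagrangian coordinates, which turns the transport equation for $\rho$ into an algebraic relation and leaves a single quasilinear parabolic equation for the velocity. Writing $X_u(t,y) := y + \int_0^t u(\tau, X_u(\tau,y))\,\d\tau$, $\bar u(t,y) := u(t,X_u(t,y))$ and $\bar \rho(t,y) := \rho(t,X_u(t,y))$, mass conservation gives the explicit formula $\bar\rho(t,y) = \rho_0(y)/J_{\bar u}(t,y)$ with $J_{\bar u} := \det(\nabla_y X_{\bar u})$. Plugging this into the momentum equation produces an equation of schematic form
\begin{equation*}
\rho_0\,\partial_t\bar u - 2\divergence(\mu_0 D(\bar u)) - \nabla(\lambda_0\divergence\bar u) = F(\bar u),\qquad \bar u|_{\bd\Omega}=0,\quad \bar u|_{t=0}=u_0,
\end{equation*}
where $\mu_0 := \mu(\rho_0)$, $\lambda_0 := \lambda(\rho_0)$, and $F(\bar u)$ gathers the pressure term $-\nabla P(\rho_0/J_{\bar u})$, the discrepancy between Lagrangian and Eulerian differential operators encoded in $\nabla X_{\bar u}-\Id$, and the perturbation of the viscous coefficients caused by replacing $\bar\rho$ by $\rho_0$.

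I would next plug this system into the maximal $\L^1$-regularity framework for the variable-coefficient Lam\'e operator with density $\rho_0$, whose solvability in $\B^{d/p-1}_{p,1}(\Omega;\IR^d)$ is the content of the semigroup results announced in the introduction. Denoting by $\cE_T$ the space of vector fields with $v\in\cC_b([0,T];\B^{d/p-1}_{p,1})\cap\W^{1,1}(0,T;\B^{d/p-1}_{p,1})\cap\L^1(0,T;\B^{d/p+1}_{p,1})$, the linear Cauchy problem with datum $u_0\in\B^{d/p-1}_{p,1}$ and source in $\L^1(0,T;\B^{d/p-1}_{p,1})$ admits a unique solution in $\cE_T$, with a time-independent bound for the semigroup part (via the real-interpolation identification of the trace space of $\cE_T$ at $t=0$) and a standard Duhamel bound for the source.

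The solution of the nonlinear problem is then produced as the unique fixed point of $\Phi:\bar u\mapsto v$, where $v$ solves the linearization above with source $F(\bar u)$, on a small ball of $\cE_T$ centered at the free evolution of $u_0$. Three families of estimates drive the analysis: the algebra property of $\B^{d/p}_{p,1}(\Omega)$ together with the product law $\B^{d/p}_{p,1}\cdot\B^{d/p-1}_{p,1}\hookrightarrow\B^{d/p-1}_{p,1}$, both valid for $p\in(d-1,2d)$; composition estimates $\|G(h)\|_{\B^{d/p}_{p,1}}\leq C(\|h\|_{\L^\infty})\|h\|_{\B^{d/p}_{p,1}}$ for smooth $G$ vanishing at $0$, combined with $\B^{d/p}_{p,1}\hookrightarrow\L^\infty$; and stability of $X_{\bar u}$ and of $(\nabla X_{\bar u})^{-1}-\Id$ in $\cC_b([0,T];\B^{d/p}_{p,1})$ in terms of $\int_0^T\|\nabla\bar u\|_{\B^{d/p}_{p,1}}\,\d\tau$. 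Since $\L^1(0,T;\B^{d/p+1}_{p,1})\hookrightarrow\L^1(0,T;\B^{d/p}_{p,1})$, the latter is small for $T$ small, and each piece of $F(\bar u)$ can be bounded in $\L^1(0,T;\B^{d/p-1}_{p,1})$ by a quantity going to $0$ as $T\to 0$. This yields both self-mapping and contraction on a suitable ball. The fixed point $\bar u$ determines $X_{\bar u}$, then $\bar\rho=\rho_0/J_{\bar u}$, and, by Eulerian push-forward through the diffeomorphism $X_{\bar u}(t,\cdot)$, the desired $(\rho,u)$; the lower bound $\inf\rho>0$ and the conservation of the average follow from $\bar\rho=\rho_0/J_{\bar u}$ with $J_{\bar u}$ close to $1$ and $\int_\Omega \rho\,\d x=\int_\Omega \rho_0\,\d y$ by the change of variables.

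The hard part is that the critical $\L^1$-in-time setting leaves no margin to absorb derivatives: the source $F(\bar u)$ must be bounded in the very same space $\L^1(0,T;\B^{d/p-1}_{p,1})$ in which the linear theory produces $\partial_t\bar u$ and the top-order spatial derivatives, so each composition with $\rho_0/J_{\bar u}$, each geometric correction coming from Lagrangian differentiation, and each product involving $\nabla\bar u$ must be controlled using the sharp paraproduct and composition laws in Besov spaces at the critical index $d/p$. The range $p\in(d-1,2d)$ is precisely what makes all these estimates simultaneously consistent while preserving the embedding $\B^{d/p}_{p,1}(\Omega)\hookrightarrow\L^\infty(\Omega)$ needed for smooth composition; conversely, the availability of the endpoint $\L^1$ time integrability, supplied by the Da Prato--Grisvard-type interpolation argument outlined above, is what allows critical Besov regularity in space to propagate without logarithmic losses and thus closes the contraction.
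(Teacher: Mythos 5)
Your overall route coincides with the paper's: recast \eqref{eq:CNS} in Lagrangian coordinates, write the momentum equation as ${\bf L}_{\rho_0}\bar u = F(\bar u)$ with $\rho_0,\mu_0=\mu(\rho_0),\lambda_0=\lambda(\rho_0)$ as variable leading coefficients, close a contraction on a small ball around the free evolution, and convert back to Eulerian variables using smallness of $\int_0^T\|\nabla\bar u\|_{\B^{d/p}_{p,1}}\d t$ for the change of coordinates to be a diffeomorphism; the conservation of the average and the lower bound on $\rho$ are handled exactly as in the paper.

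There is, however, a genuine gap where you write that solvability of the variable-coefficient Lam\'e problem in $\L^1(0,T;\B^{d/p-1}_{p,1})$ ``is the content of the semigroup results announced in the introduction.'' Those semigroup results (Section~\ref{s:lame}, culminating in Corollary~\ref{c:lame}) concern only the \emph{constant}-coefficient operator $-\mu\Delta-\mu'\nabla\divergence$. For an initial density whose oscillation is not small, the coefficients $\rho_0,\mu(\rho_0),\lambda(\rho_0)$ are far from constants, and maximal $\L^1$-regularity for the operator $u\mapsto\rho_0\partial_tu-2\divergence(\mu_0 D(u))-\nabla(\lambda_0\divergence u)$ does not follow from the constant-coefficient theory or from Da~Prato--Grisvard as stated; one would at least need to identify the domain of the variable-coefficient operator and its interpolation scales, which is not done. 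The paper handles this with a separate and substantial result, Proposition~\ref{c:lamebis}, proved by a localization argument: a finite covering of $\overline\Omega$ by balls of radius $\delta$ on which the coefficients oscillate by at most $\eps$ in $\B^{d/p}_{p,1}$, a partition of unity $(\phi_k)$, application of the constant-coefficient Corollary~\ref{c:lame} to each $\phi_k u$, absorption of the commutator and oscillation errors via Proposition~\ref{p:product} and an interpolation inequality trading $\B^{s+1}_{p,1}$ for $\B^s_{p,1}$ and $\B^{s+2}_{p,1}$, and finally Gr\"onwall's lemma to close the a~priori estimate; existence is then obtained by a continuity (homotopy) method in $\theta$, not by exhibiting a semigroup. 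Without this step your fixed-point scheme cannot be launched for densities with large variations, which is precisely the generality asserted in Theorem~\ref{Thm:local}. (For the easy case $\|\rho_0-1\|_{\B^{d/p}_{p,1}}$ small the paper in Proposition~\ref{p:local} avoids the issue by using the \emph{constant}-coefficient operator and putting $(1-\rho_0)\partial_t\bar u$ into the source; your argument would go through there.)
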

Proving a  global result for small perturbations of a stable constant state 
is based on  maximal regularity estimates for the linearized compressible Navier--Stokes system
(where $\mu'=\lambda+\mu$):
\begin{equation}\label{eq:LCNS}
\left\{\begin{aligned}
&\partial_ta+ \divergence u=f\quad&\hbox{in }\ \IR_+\times\Omega,\\
&\partial_t u- \mu \Delta u - \mu' \nabla\divergence u
+\nabla a=g&\quad\hbox{in }\ \IR_+\times\Omega,\\
&u=0&\quad\hbox{on }\ \IR_+\times\partial\Omega,\\
&(a,u)|_{t=0}=(a_0,u_0)&\quad\hbox{in }\ \Omega.
\end{aligned}\right.
\end{equation}
The following statement extends the work by 
P.B.\@ Mucha and W.\@ Zaj\c aczkowski~\cite{MZ02} to  the endpoint case where 
the time Lebesgue exponent is equal to $1,$
and also provides exponential decay for the solutions of the system.  

\begin{theorem}\label{thm:linear}
Take initial data  $(a_0,u_0)$ in $\B^{s+1}_{p,1} (\Omega)\times\B^{s}_{p,1}(\Omega ; \IR^d)$ and source terms $(f,g)$ in $\L^1(\IR_+;\B^{s+1}_{p,1} (\Omega)\times\B^{s}_{p,1} (\Omega ; \IR^d))$
with $(s,p)$ satisfying 
$$1<p<\infty\andf\max\biggl(\frac1p,\frac dp-\frac d2\biggr)-1<s<\frac1p\cdotp$$
Assume also that the average of $a_0$ and of $f(t)$ (for a.e.\@ $t>0$) is zero.
Then, System~\eqref{eq:LCNS} has a unique global 
solution $(a,u)$ in the space 
\begin{align}\label{eq:Ep}
\E_p^s:= \W^{1 , 1} (\IR_+ ; \B^{s+1}_{p,1} (\Omega) \times\B^{s}_{p,1} (\Omega ; \IR^d)) \cap \L^1 (\IR_+ ; \B^{s+1}_{p,1} (\Omega)\times\B^{s + 2}_{p,1} (\Omega ; \IR^d))\cdotp
\end{align}
Additionally,  there exist two positive constants $c$ and $C$ depending only on  $p,$ $\Omega,$
$\mu$, and $\mu'$ such that if $\e^{ct}(f,g) \in \L^1(\IR_+;\B^{s+1}_{p,1}(\Omega)\times\B^{s}_{p,1}(\Omega ; \IR^d))$, then
\begin{equation}\label{eq:decay}
\|\e^{ct}(a,u)\|_{\E^s_p}\leq C\biggl(\|(a_0,u_0)\|_{\B^{s+1}_{p,1}\times\B^{s}_{p,1}} 
+ \|\e^{ct}(f,g)\|_{\L^1(\IR_+;\B^{s+1}_{p,1}\times\B^{s}_{p,1})}\biggr)\cdotp\end{equation} 
\end{theorem}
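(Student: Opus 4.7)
The plan is to reformulate \eqref{eq:LCNS} as an abstract Cauchy problem $\partial_t U + \mathcal{A}U = F$, with $U = (a,u)$, $F = (f,g)$, and
$$\mathcal{A}(a,u) := \bigl(\divergence u,\ -\mu\Delta u - \mu'\nabla\divergence u + \nabla a\bigr),$$
then combine analyticity and exponential stability of the associated semigroup with a Da Prato--Grisvard real interpolation argument to reach the endpoint $q=1$ in the Besov scale. The Dirichlet condition on $u$ gives $\int_\Omega \divergence u = 0$, so with $\int a_0 = \int f(t) = 0$ by hypothesis the mean-zero constraint on $a$ is preserved by the flow, and one may restrict to the subspace of zero-mean first components. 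On the Lebesgue base $Y_p := \L^p_0(\Omega) \times \L^p(\Omega; \IR^d)$, with domain $\dom(\mathcal{A}) := (\W^{1,p}(\Omega) \cap \L^p_0) \times (\W^{2,p} \cap \W^{1,p}_0)(\Omega; \IR^d)$, prior work of Mucha--Zaj\c aczkowski and Kotschote (or a direct resolvent analysis building on the Lam\'e semigroup developed earlier in this paper) yields that $-\mathcal{A}$ generates a bounded analytic semigroup and has maximal $\L^q$-regularity for every $q \in (1, \infty)$.

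Exponential decay is established next. Since $\Omega$ is bounded, $\mathcal{A}$ has compact resolvent and thus discrete spectrum. If $(a,u) \in \dom(\mathcal{A})$ satisfies $\mathcal{A}(a,u) = 0$, then $\divergence u = 0$ and $-\mu\Delta u + \nabla a = 0$; pairing against $u$ in $\L^2$ (made possible by Sobolev embedding when $p<2$) and integrating by parts using $\divergence u = 0$ and $u|_{\partial\Omega} = 0$, one obtains $\mu \|\nabla u\|_{\L^2}^2 = 0$, whence $u = 0$, then $\nabla a = 0$, and finally $a = 0$ by mean-zero. Combined with sectoriality this forces $\sigma(\mathcal{A}) \subset \{\Re \lambda \geq 2c\}$ for some $c > 0$, so $\|\e^{-t\mathcal{A}}\|_{Y_p \to Y_p} \lesssim \e^{-2ct}$. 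To pass to maximal $\L^1$-regularity in Besov spaces, I would invoke the Da Prato--Grisvard principle: on each real interpolation space $D_\mathcal{A}(\theta, 1) := (Y_p, \dom(\mathcal{A}))_{\theta, 1}$, $\mathcal{A}$ still has maximal $\L^1$-regularity, and the target identifications
$$D_\mathcal{A}(\theta, 1) \simeq (\B^{s+1}_{p,1}(\Omega) \cap \L^p_0) \times \B^s_{p,1}(\Omega; \IR^d),$$
$$\{V \in \dom(\mathcal{A}) : \mathcal{A}V \in D_\mathcal{A}(\theta, 1)\} \simeq (\B^{s+1}_{p,1} \cap \L^p_0) \times (\B^{s+2}_{p,1} \cap \W^{1,p}_0)$$
translate this abstract statement directly into the claim about $\E_p^s$. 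The upper bound $s < 1/p$ ensures no Dirichlet trace is imposed on $a$, while $s > \max(1/p, d/p-d/2) - 1$ both accommodates the coupling terms in the Besov scale and secures the Sobolev embedding into $\L^2$ used above.

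The weighted estimate \eqref{eq:decay} then follows by setting $\bar U := \e^{ct}U$: this solves \eqref{eq:LCNS} with $\mathcal{A}$ replaced by $\mathcal{A} - c$ and source $\e^{ct}F$, and for $c$ strictly smaller than the spectral bound of $\mathcal{A}$ the shifted operator remains bounded analytic and enjoys the same Besov-scale maximal regularity, so the estimate transfers to $\bar U$. The main obstacle I anticipate lies in the interpolation identification above: because \eqref{eq:LCNS} is of mixed hyperbolic--parabolic type, with $a$ and $u$ living on different regularity scales, abstract real interpolation applied to the na\"\i ve base/domain pair does not transparently produce the target asymmetric Besov product. One remedy is a Helmholtz-type decomposition into a Stokes subsystem (for the divergence-free component of $u$) and a damped-wave subsystem (for $a$ and the potential component), each purely parabolic after reduction and amenable to classical Besov-scale interpolation; alternatively, one may analyse the resolvent $(\lambda + \mathcal{A})^{-1}$ directly in Besov scales. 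The sharp admissible range of $s$ should emerge precisely from the conditions under which this identification is valid.
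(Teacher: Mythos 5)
Your plan hits a wall exactly where you anticipate, and the workaround you sketch is not the one the paper uses.  Let me make the obstruction precise and then describe what the paper actually does.

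You propose to take the Lebesgue base $Y_p := \L^p_0(\Omega)\times\L^p(\Omega;\IR^d)$ with domain $\dom(\cA) = (\W^{1,p}\cap\L^p_0)\times(\W^{2,p}\cap\W^{1,p}_0)$ and then identify $D_\cA(\theta,1):=(Y_p,\dom(\cA))_{\theta,1}$ with $(\B^{s+1}_{p,1}\cap\L^p_0)\times\B^s_{p,1}$.  That identification is impossible: the first component would require $\theta=s+1$ (interpolating $\L^p_0$ and $\W^{1,p}\cap\L^p_0$), while the second would require $\theta=s/2$ (interpolating $\L^p$ and $\W^{2,p}\cap\W^{1,p}_0$).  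No single $\theta\in(0,1)$ makes both come out, and this is not a cosmetic obstacle: the $a$-equation $\partial_t a + \divergence u = f$ contains no smoothing in $a$, so $\cA$ does not have the two-derivative base/domain gap in that component that Da Prato--Grisvard implicitly exploits.  Your Helmholtz-decomposition remedy will also fail in a bounded domain, because the Helmholtz projection does not commute with the Lam\'e operator under Dirichlet conditions, so the Stokes/damped-wave split is not preserved by the evolution.

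The paper instead applies Da Prato--Grisvard only to the pure Lam\'e operator (which is genuinely parabolic and fits the abstract scheme, cf.\ Theorem~\ref{Thm: Maximal regularity of Lame on Besov} and Corollary~\ref{c:lame}).  For the coupled operator $A_{p,q,s}$ it sets $\cX^s_{p,q}$ and $\dom(A_{p,q,s})$ to share the same first component $\B^{s+1}_{p,q}\cap\L^p_0$ (so the $a$-variable is carried along, not interpolated), and proves maximal regularity directly in four steps: (i) a Neumann-series perturbation argument reducing the coupled system with a large shift $K$ to the Lam\'e problem, giving finite-time maximal regularity; (ii) $0\in\rho(A_{p,q,s})$ via Bogovskii and Stokes estimates; (iii) $\IC_+^\times\subset\rho(-A_{p,q,s})$ via a weak $\L^2_0\times\W^{-1,2}$ formulation with the $\lambda$-dependent form $\fa_\lambda$, followed by a bootstrap to the Besov scale; (iv) combining these to get sectoriality on a sector through the imaginary axis plus exponential stability, whence Dore's theorem yields global maximal $\L^q$-regularity up to $q=1$.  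Your proposal captures the spirit of steps (ii) and (iii) (injectivity argument, exponential stability from $0\in\rho$, weighted estimate via the shift $\cA-c$), but the route to the actual maximal-regularity bound in the asymmetric Besov spaces is missing.

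Separately, you do not address the initial-data step, and here too the naive argument breaks: one cannot simply invoke Corollary~\ref{Cor: Initial values}, because the first component of $\cX^s_{p,1}$ and of $\dom(A_{p,1,s})$ is the same space, so the real-interpolation trace space in $a$ is trivial and $a_0\in\B^{s+1}_{p,1}$ is not reached.  The paper's fix is a caloric extension: one subtracts $(\e^{t\Delta_N}a_0, \e^{t\Delta_D}u_0)$, whose maximal $\L^1$-regularity follows from Corollary~\ref{Cor: Initial values} applied to the (decoupled, genuinely parabolic) Neumann and Dirichlet Laplacians, and then solves the remaining zero-initial-data problem for the coupled operator.  This caloric-extension device is an essential ingredient and your proposal contains no substitute for it.
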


After recasting System \eqref{eq:CNS} in Lagrangian coordinates, combining the above
result with  nonlinear estimates allows to get the following  global well-posedness result
for critical regularity:
\begin{theorem}\label{Thm:global}
Let $\Omega,$ $p$, and $d$ be as in Theorem~\ref{Thm:local}
and assume in addition that $P'(1)>0.$  Let $\rho_0\in \B^{d/p}_{p,1}(\Omega)$
with average $1$ and   $u_0\in\B^{d/p-1}_{p,1}(\Omega;\IR^d).$
There exists a constant 
$\alpha=\alpha(\lambda,\mu,p,d,P,\Omega) > 0$ such that if $a_0:=\rho_0-1$ and $u_0$
satisfy 
\begin{equation}\label{eq:smalldata}
\|a_0\|_{\B^{d/p}_{p,1}}+ \|u_0\|_{\B^{d/p-1}_{p,1}}\leq \alpha,
\end{equation}
then System~\eqref{eq:CNS} admits a unique global solution $(\rho,u)$
with $(a,u):=(\rho-1,u)$ 
in the maximal regularity space $\E_p:=\E_p^{d/p-1}.$
In addition, there exists  $c>0$ depending only on the parameters of the system, 
on $p$, and on $\Omega$ such that 
  $(a,u)$ fulfills: 
$$\| \e^{c t} (a,u)\|_{\E_p}\leq C \bigl(\|a_0\|_{\B^{d/p}_{p,1}}
+ \|u_0\|_{\B^{d/p-1}_{p,1}}\bigr)\cdotp$$
\end{theorem}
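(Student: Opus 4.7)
My plan is to recast \eqref{eq:CNS} in Lagrangian coordinates so that the continuity equation reduces to an algebraic relation (thus avoiding the loss of derivative due to the convection term $u\cdot\nabla\rho$), yielding a fully parabolic system to which I can apply a contraction mapping argument in the space $\E_p$, with Theorem~\ref{thm:linear} as the linear building block and an exponentially weighted variant of $\E_p$ to propagate decay.

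\textbf{Lagrangian reformulation.} Let $X(t,y)=y+\int_0^t u(\tau,X(\tau,y))\,\d\tau$ be the flow of $u$, and set $\bar u(t,y)=u(t,X(t,y))$, $\bar\rho(t,y)=\rho(t,X(t,y))$, $\bar a:=\bar\rho-1$. The continuity equation integrates explicitly to $\bar\rho\det(\nabla_y X)=\rho_0$, so $\bar a$ becomes a nonlinear functional of $\int_0^t\nabla\bar u$ and $a_0$, and a change of variables shows that its spatial average remains $0$ for all $t$. Substituting into the momentum equation and splitting the transformed differential operators as $\nabla=\nabla+[(DX)^{-T}-\Id]\nabla$ and similarly for $\divergence$ and $\Delta$, one obtains a system of the form
\begin{equation*}
\partial_t\bar a+\divergence\bar u=F(\bar a,\bar u),\qquad
\partial_t\bar u-\mu\Delta\bar u-\mu'\nabla\divergence\bar u+P'(1)\nabla\bar a=G(\bar a,\bar u),
\end{equation*}
with no-slip boundary conditions and initial data $(a_0,u_0)$, where $F$ and $G$ are at least quadratic (or linear with a prefactor vanishing at the origin) in $(\bar a,\bar u,\nabla\bar a,D\bar u)$.

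\textbf{Fixed point.} I would define $\Phi(\bar a',\bar u'):=(\bar a,\bar u)$ where $(\bar a,\bar u)$ solves the linearized system \eqref{eq:LCNS} with source terms $F(\bar a',\bar u')$ and $G(\bar a',\bar u')$, and apply Theorem~\ref{thm:linear} with $s=d/p-1$. The range $d-1<p<2d$ is exactly what is needed to place $s$ in the admissible window and to ensure that $\B^{d/p}_{p,1}(\Omega)$ is a Banach algebra continuously embedded in $\L^\infty$. The heart of the proof consists in the nonlinear estimates
$$
\|(F,G)(\bar a',\bar u')\|_{\L^1(\IR_+;\B^{s+1}_{p,1}\times\B^s_{p,1})}\leq C\|(\bar a',\bar u')\|_{\E_p}^{\,2}
$$
together with the matching Lipschitz bound, so that for $\alpha$ small enough $\Phi$ contracts on a small ball of $\E_p$, yielding a unique Lagrangian solution. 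Inverting the diffeomorphism $X(t,\cdot)$ then produces the Eulerian solution stated in the theorem.

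\textbf{Main obstacle and decay.} The hard part will be the nonlinear analysis in the critical $\L^1$-in-time framework: because $\L^1$ in time allows no H\"older-in-time interpolation, all products, compositions, and inversions of the Jacobian $DX$ must be carried out directly in Besov spaces with last index $1$, exploiting their algebra structure together with the embedding $\B^{d/p+1}_{p,1}\hookrightarrow\W^{1,\infty}$. In particular, one must verify that $X(t,\cdot)$ stays a $\cC^1$-diffeomorphism of $\Omega$ for all $t\geq0$ (which follows from $\|\nabla\bar u\|_{\L^1(\IR_+;\L^\infty)}\ll1$ via the embedding of the $\L^1$-component of $\E_p$), expand $(DX)^{-1}-\Id$ as a Neumann series controlled in $\cC_b(\IR_+;\B^{d/p}_{p,1})\cap\L^1(\IR_+;\B^{d/p+1}_{p,1})$, and express $\bar a$ as a power series in $\int_0^t\divergence\bar u\,\d\tau$ using the algebra property term by term. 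To obtain the exponential decay claimed in the theorem, I would re-run the fixed point in the weighted class $\{(\bar a,\bar u):\e^{ct}(\bar a,\bar u)\in\E_p\}$ with $c$ given by Theorem~\ref{thm:linear}: any nonlinear term being at least quadratic, inserting $\e^{ct}$ on one factor leaves the second factor with unweighted $\E_p$-norm, so the nonlinear estimates go through verbatim and the weighted linear bound \eqref{eq:decay} closes the argument once $\alpha$ is chosen small.
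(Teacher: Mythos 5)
Your proposal follows essentially the same route as the paper: Lagrangian reformulation, the $\L^1$-maximal regularity of Theorem~\ref{thm:linear} with $s=d/p-1$, product and composition estimates in $\B^{d/p}_{p,1}(\Omega)$ together with a Neumann-series expansion of the Jacobian factors to close the quadratic nonlinear bounds, and a contraction argument in the exponentially weighted version of $\E_p$ yielding both global existence and decay. One small attribution to double-check: the condition $p<2d$ is not required to make $s=d/p-1$ admissible in Theorem~\ref{thm:linear} (only $p>d-1$ is), but rather enters, together with $d\geq2$, through the product law $\B^{d/p-1}_{p,1}\times\B^{d/p}_{p,1}\to\B^{d/p-1}_{p,1}$ that one needs when bounding terms like $a_0\,\partial_t\bar u$.
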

The rest of the paper unfolds as follows. 
The next two sections are dedicated to the ``linear study" namely 
proving maximal regularity results first for the Lam\'e operator, 
and next  for the linearized compressible Navier--Stokes system. 
In Section~\ref{s:global}, we prove our main global existence result. 
In Section~\ref{s:local}, we establish 
local existence results with no smallness condition on the velocity,
first in the easy case where the initial density is close to a constant
and, next, assuming only that the density is bounded away from zero. 
Some technical results are recalled/proved in Appendix.

\subsection*{Acknowledgement}
The authors have been partially supported by ANR-15-CE40-0011.


\section{Some background from semigroup theory}
\label{Some background from semigroup theory}
We use this section to introduce the basic functional analytic notions and arguments that are crucial for the theory that is developed afterwards. \par
Let $X$ denote a Banach space over the complex field. For $\theta \in (0 , \pi)$ define the sector $\Sec_{\theta}$ in the complex plane
\begin{align*}
 \Sec_{\theta} := \{ z \in \IC \setminus \{ 0 \} : \lvert \arg(z) \rvert < \theta \},
\end{align*}
and set  $\Sec_0 := (0 , \infty)$. 
\medbreak
The  standard definition of (bounded) analytic semigroups reads:
\begin{definition}
\label{Def: Analytic semigroup}
A family $(T(z))_{z \in \Sec_{\theta} \cup \{ 0 \}} \subset \Lop(X)$, $\theta \in (0 , \pi / 2]$, is called an \textit{analytic semigroup of angle $\theta$} if
\begin{enumerate}
 \item $T(0) = \Id$ and $T(z + w) = T(z) T(w)$ for all $z , w \in \Sec_{\theta}$; \label{Item: Semigroup law}
 \item the map $z \mapsto T(z)$ is analytic in $\Sec_{\theta}$; \label{Item: Analyticity}
 \item $\lim_{\Sec_{\vartheta} \ni z \to 0} T(z) x = x$ for all $x \in X$ and all $0 < \vartheta < \theta$.
\end{enumerate}
If in addition
\begin{enumerate}
 \item[(4)] $\| T(z) \|_{\Lop(X)}$ is bounded in $\Sec_{\vartheta}$ for all $0 < \vartheta < \theta$,
\end{enumerate}
the family $(T(z))_{z \in \Sec_{\theta} \cup \{ 0 \}}$ is called a \textit{bounded analytic semigroup}.
\end{definition}

To any analytic semigroup of some angle $\theta \in (0 , \pi / 2],$  one can  attach a unique operator $A : \dom(A) \subset X \to X$ defined by
\begin{align*}
 \dom(A) := \Big\{ x \in X : \lim_{t \to 0} \frac{1}{t} (T(t) x - x) \text{ exists} \Big\}
\end{align*}
and, for $x \in \dom(A),$ 
\begin{align*}
 A x := - \lim_{t \to 0} \frac{1}{t} (T(t) x - x).
\end{align*}
The operator  $-A$ is called the \textit{generator} of  $(T(z))_{z \in \Sec_{\theta} \cup \{ 0 \}}.$ 
\medbreak
Combining~\eqref{Item: Semigroup law} and~\eqref{Item: Analyticity} one readily sees that the range of $T(z)$ is contained in $\dom(A)$ for any $z \in \Sec_{\theta}$ and that the function $u : [0 , \infty) \to X$ given by $u (t) := T(t) x$ solves the \textit{abstract Cauchy problem}
\begin{align}
\label{Eq: Abstract IVP}
\left\{ \begin{aligned}
 u^{\prime} (t) + A u (t) &= 0 && t > 0, \\
 u(0) &= x.
\end{aligned} \right.
\end{align}

{}From the PDE perspective,  one can wonder if, whenever  $A : \dom(A) \subset X \to X$ is 
a given   linear operator,  $-A$ is the generator of an analytic semigroup. 
At this point, we need to recall the notion of a \emph{sectorial operator.}
\begin{definition}
A linear operator $B : \dom(B) \subset X \to X$ is called \textit{sectorial of angle $\omega$} for some $\omega \in [0 , \pi)$ if its spectrum satisfies $\sigma (B) \subset \overline{\Sec_{\omega}}$ and if for all $\omega < \omega^{\prime} < \pi$ there exists $C > 0$ such that
\begin{align*}
 \| \lambda (\lambda - B)^{-1} \|_{\Lop(X)} \leq C \qquad (\lambda \in \IC \setminus \overline{\Sec_{\omega^{\prime}}}).
\end{align*}
\end{definition}
The following characterization theorem for analytic semigroups is classical~\cite[Thm.~II.4.6]{Engel_Nagel}.

\begin{theorem} Let $A : \dom(A) \subset X \to X$  be a linear operator. 
Then $-A$ is the generator of an analytic semigroup if and only if $A$ is densely defined and  there exists $z \in \IC$ such that $z + A$ is sectorial of some angle $\omega \in [0 , \pi / 2)$. Moreover, $-A$ generates a bounded analytic semigroup if and only if additionally one can choose $z = 0$, i.e., $A$ itself is sectorial of angle $\omega \in [0 , \pi / 2)$.
\end{theorem}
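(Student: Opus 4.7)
The strategy is to go via the Dunford functional calculus, constructing the semigroup explicitly as a contour integral of the resolvent in the sectorial case, and then recovering the resolvent estimates by analytic continuation of the Laplace transform in the converse direction. I would first reduce to the bounded case: if $z + A$ is sectorial of angle $\omega < \pi/2$, put $B := z + A$, construct a bounded analytic semigroup $S(\cdot)$ whose generator is $-B$, and then define $T(t) := \mathrm{e}^{zt} S(t)$, noting that $-A = -B + z$. Conversely, if $-A$ generates an analytic semigroup, then $-A + z$ generates $(\mathrm{e}^{-zt} T(t))_{t\geq 0}$, which is bounded for $\Re z$ large enough, so it suffices to prove the bounded case in full.

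For the implication \textit{sectorial of angle $\omega\in [0,\pi/2)$ $\Rightarrow$ bounded analytic semigroup} with $A$ densely defined, fix $\omega < \omega' < \pi/2$ and define, for $z\in\Sec_{\pi/2-\omega'}$,
\begin{align*}
T(z) := \frac{1}{2\pi\ii}\int_{\Gamma} \mathrm{e}^{-\lambda z}\,(\lambda - (-A))^{-1}\,\d\lambda,
\end{align*}
where $\Gamma$ is the positively oriented boundary of $\Sec_{\omega''}$ for some $\omega'<\omega''<\pi/2$, with a small arc around $0$. The sectorial bound $\|\lambda(\lambda+A)^{-1}\|_{\Lop(X)}\leq C$ together with $\Re(\lambda z)\geq \delta|\lambda||z|$ on the two rays of $\Gamma$ makes the integral absolutely convergent, and a rescaling $\lambda\mapsto\lambda/|z|$ immediately gives $\|T(z)\|_{\Lop(X)}\leq C$. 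The verifications I would then carry out are: (i) analyticity of $z\mapsto T(z)$ by differentiation under the integral; (ii) the semigroup law $T(z)T(w)=T(z+w)$ via Fubini on two nested contours $\Gamma,\Gamma'$ combined with the resolvent identity and the Cauchy formula (the inner contour contributes $\mathrm{e}^{-\mu(z+w)}(\mu+A)^{-1}$, the outer one vanishes by analyticity and decay); (iii) strong continuity at $0$, first for $x\in\dom(A)$ through the identity
\begin{align*}
T(t)x - x = -\frac{1}{2\pi\ii}\int_{\Gamma}\frac{\mathrm{e}^{-\lambda t}}{\lambda}\,(\lambda+A)^{-1}Ax\,\d\lambda,
\end{align*}
and then extended to all $x\in X$ by the density of $\dom(A)$ and the uniform bound on $T$; (iv) identification of the generator with $-A$, by computing $\lim_{t\to 0^+} t^{-1}(T(t)x-x)$ on $\dom(A)$ via the same contour identity and comparing $\dom(A)$ with the domain of the generator using a resolvent argument.

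For the converse, assume $-A$ generates a bounded analytic semigroup $(T(z))_{z\in\Sec_\theta\cup\{0\}}$ of angle $\theta\in(0,\pi/2]$. Density of $\dom(A)$ is classical: it follows from $\lim_{t\to0^+}\frac{1}{t}\int_0^t T(s)x\,\d s = x$ and the fact that $\int_0^t T(s)x\,\d s\in\dom(A)$. To obtain sectoriality of $A$, I would use the Laplace representation
\begin{align*}
(\lambda + A)^{-1}x = \int_0^\infty \mathrm{e}^{-\lambda s}\,T(s)x\,\d s \qquad (\Re\lambda>0),
\end{align*}
and then rotate the path of integration: for $|\vartheta|<\theta$, replace $s$ by $s\mathrm{e}^{\ii\vartheta}$, which is legitimate by boundedness of $T$ on $\Sec_{\vartheta}$ together with Cauchy's theorem applied on a large quarter-disc. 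This yields a holomorphic extension of $\lambda\mapsto(\lambda+A)^{-1}$ to every $\lambda$ with $\Re(\mathrm{e}^{\ii\vartheta}\lambda)>0$, i.e.\ to $\IC\setminus\overline{\Sec_{\pi/2-\theta}}$, together with the bound $\|\lambda(\lambda+A)^{-1}\|_{\Lop(X)}\leq C_\vartheta$ coming from $\|T(s\mathrm{e}^{\ii\vartheta})\|_{\Lop(X)}\leq M$. This is exactly sectoriality of $A$ with angle $\omega=\pi/2-\theta\in[0,\pi/2)$.

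\textbf{Main obstacle.} The delicate point is proving the semigroup law $T(z)T(w)=T(z+w)$ in the construction direction: one has to handle a double contour integral, justify interchange of integration via Fubini using the decay from the two exponential factors in different directions, and then collapse one of the resolvent integrals by Cauchy's theorem, controlling the arc at infinity using the sectorial estimate. Once this is in place, the rest reduces to differentiation under the integral and density arguments.
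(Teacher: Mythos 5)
The paper cites this as a classical result (Engel--Nagel, Thm.~II.4.6) and gives no proof of its own, so there is nothing in the paper to compare against; your sketch is the standard textbook argument (construct $T(z)$ as a Dunford-type contour integral of the resolvent, verify the semigroup axioms, and in the converse direction rotate the Laplace integral to extend the resolvent holomorphically), and it would yield a correct proof once the details are filled in. One small but worth-fixing issue is the bookkeeping of the sectors: in the construction direction, the admissible range for $z$ should be $\Sec_{\pi/2-\omega''}$ (depending on the contour opening $\omega''$, chosen with $\omega<\omega''<\pi/2$), not $\Sec_{\pi/2-\omega'}$, since the decay of $\e^{-\lambda z}$ requires $|\arg\lambda|+|\arg z|<\pi/2$ on $\Gamma$; and in the converse direction the Laplace rotation extends $\lambda\mapsto(\lambda+A)^{-1}$ to $\Sec_{\pi/2+\theta}$, which corresponds to $\mu=-\lambda\in\IC\setminus\overline{\Sec_{\pi/2-\theta}}$ in the resolvent $(\mu-A)^{-1}$ that appears in the definition of sectoriality --- keep the two variables apart to avoid the sign confusion. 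The line ``$-A+z$ generates $\e^{-zt}T(t)$'' should read $-A-z$ (equivalently $-(A+z)$), though this does not affect the reduction.
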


\begin{remark}
\label{Rem: Analyticity}
The condition that $z + A$ is sectorial of angle $\omega \in [0 , \pi / 2)$ is equivalent to the fact that there exists $R > 0$ such that $\sigma (-A) \subset \overline{\Sec_{\omega}} \cup B(0 , R)$ and such that
\begin{align*}
 \| \lambda (\lambda +A)^{-1} \|_{\Lop(X)} \leq C \qquad (\lambda \in \IC \setminus [ \overline{\Sec_{\omega^{\prime}}} \cup B(0 , R)]).
\end{align*}
\end{remark}

\begin{remark}
\label{Rem: Exponential decay}
If $-A$ generates a bounded analytic semigroup and if $0 \in \rho(A)$, then the corresponding semigroup is exponentially decaying. Indeed, as $A$ is sectorial of angle $\omega \in [0 , \pi / 2)$ and as the resolvent set is open, one finds that
\begin{align*}
 \inf_{\lambda \in \sigma (A)} \Re(\lambda) > 0.
\end{align*}
Thus, there exists $\eps > 0$ and $\omega^{\prime} \in [0 , \pi / 2)$ such that $A-\eps$ is sectorial of angle $\omega^{\prime}$ which implies that the semigroup generated by $\eps- A$ is bounded. This in turn implies the exponential decay of the semigroup generated by $-A$.
\end{remark}

To solve nonlinear equations, it is helpful to consider~\eqref{Eq: Abstract IVP} for a homogeneous initial value but for an inhomogeneous right-hand side of the first equation, i.e.,
\begin{align}
\label{Eq: Inhomogeneous abstract Cauchy problem}
\left\{ \begin{aligned}
 u^{\prime} (t) + A u (t) &= f(t) && t \in (0 , T), \\
 u(0) &= 0,
\end{aligned} \right.
\end{align}
where $0 < T \leq \infty$ and $f \in \L^q (0 , T ; X)$, $1 \leq q \leq \infty$. As recalled in the introduction,  a densely defined operator $A : \dom(A) \subset X \to X$ is said to have \textit{maximal $\L^q$-regularity} if there exists a constant $C > 0$ such that for all $f \in \L^q (0 , T ; X),$  System~\eqref{Eq: Inhomogeneous abstract Cauchy problem} has a unique solution $u$ that satisfies  $u (t) \in \dom(A)$ for almost all $t \in (0 , T),$  is almost everywhere differentiable and such that
\begin{align*}
 \| u^{\prime} , A u \|_{\L^q (0 , T ; X)} \leq C \| f \|_{\L^q (0 , T ; X)}.
\end{align*}
It is classical, see, e.g., Dore~\cite[Cor.~4.4]{Dore}, that the maximal $\L^q$-regularity of $A$ implies that $- A$ generates an analytic semigroup. 
Characterizing when a given operator admits maximal $\L^q$-regularity is often a difficult issue, which involves questions on the geometry of Banach spaces and operator-valued multiplier theorems, see~\cite{Denk_Hieber_Pruess, Kunstmann_Weis}. However, if one is willing to \textit{change} the underlying Banach space into a real interpolation space between $X$ and $\dom(A)$, then the question of maximal $\L^q$-regularity simplifies tremendously. It is a classical result of Da Prato and Grisvard~\cite{DaPrato-Grisvard}, that is described below. \smallbreak
To state the result, we need to introduce the definition of \emph{a part of an operator onto another space.}
\begin{definition}
Let $X$ and $Y$ be Banach spaces and $C : \dom(C) \subset X \to X$ be a linear operator. The \textit{part} of $C$ in $Y$ is the operator given by
\begin{align*}
 \dom({\bf C}) := \{ y \in \dom(C) \cap Y : C y \in Y \}, \quad {\bf C} y := C y.
\end{align*}
\end{definition}
Let in the following $B$ denote the time derivative operator on $(0 , T)$, with $0 < T \leq \infty$, i.e.,
\begin{align*}
 B : \{u \in \W^{1 , q} ((0 , T) ; X) : u (0) = 0 \} \subset \L^q (0 , T ; X) \to \L^q (0 , T ; X), \quad u \mapsto u^{\prime}.
\end{align*}
It is well-known, see, e.g.,~\cite[Sec.~8.4-8.6]{Haase}, that $B$ is sectorial of angle $\pi / 2$. \par
Furthermore, let $A$ be a densely defined and sectorial operator of angle $\omega \in [0 , \pi / 2)$, i.e., $- A$ is the generator of a bounded analytic semigroup. We lift the operator $A$ to the time-dependent space by defining
\begin{align*}
 A^{\uparrow} : \dom(A^{\uparrow}):= \L^q (0 , T ; \dom(A)) \subset \L^q (0 , T ; X) \to \L^q (0 , T ; X), \quad [A^{\uparrow} u] (t) := A u(t).
\end{align*}
As the operator $A$ does not explicitly depend on time, the resolvents of $A^{\uparrow}$ and $B$ commute, i.e., it holds
\begin{align*}
 (\lambda - A^{\uparrow})^{-1} (\mu - B)^{-1} = (\mu - B)^{-1} (\lambda - A^{\uparrow})^{-1} \qquad (\lambda \in \rho(A) , \, \mu \in \rho(B)).
\end{align*}

In this situation, the theorem of Da Prato and Grisvard may be formulated as follows, see~\cite[Thm.~3.11, Lem.~3.5]{DaPrato-Grisvard}:

\begin{theorem}
\label{Thm: Da Prato-Grisvard}
Let $\theta \in (0 , 1)$ and $1 \leq q \leq \infty$. With the notation above, the part of the operator
\begin{align*}
 C := A^{\uparrow} + B \quad \textit{with domain} \quad \dom(C) := \dom(A^{\uparrow}) \cap \dom(B)
\end{align*}
in the real interpolation space 
\begin{align*}
 \big( \L^q(0 , T ; X) , \dom(A^{\uparrow}) \big)_{\theta , q} = \L^q (0 , T ; (X , \dom(A))_{\theta , q})
\end{align*}
is sectorial of angle $\pi / 2$. Furthermore, there exists $M > 0$ such that for all $\lambda > 0$ and $y \in \L^q (0 , T ; (X , \dom(A))_{\theta , q})$ it holds that $A^{\uparrow} (\lambda + A^{\uparrow} + B)^{-1} y \in \L^q (0 , T ; (X , \dom(A))_{\theta , q})$ and $B (\lambda + A^{\uparrow} + B)^{-1} y \in \L^q (0 , T ; (X , \dom(A))_{\theta , q})$ and that
$$\displaylines{\quad
 \| A^{\uparrow} (\lambda + A^{\uparrow} + B)^{-1} y \|_{\L^q(0 , T ; (X , \dom(A))_{\theta , q})} + \| B (\lambda + A^{\uparrow} + B)^{-1} y \|_{\L^q ( 0 , T ; (X , \dom(A))_{\theta , q})} \hfill\cr\hfill
  \leq M \| y \|_{\L^q(0 , T ; (X , \dom(A))_{\theta , q})}.\quad}$$
\end{theorem}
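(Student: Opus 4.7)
The plan is to construct $(\lambda + A^{\uparrow} + B)^{-1}$ explicitly by a Dunford-type contour integral that exploits the commuting resolvents of $A^{\uparrow}$ and $B$, and then to transfer the boundedness to the interpolation space via the standard sectorial characterization
$$\|x\|_{(X,\dom(A))_{\theta,q}}^q \approx \|x\|_X^q + \int_0^{\infty}\bigl\|t^{1-\theta}A(t+A)^{-1}x\bigr\|_X^q\,\frac{\d t}{t}\cdotp$$
First I would fix a simple contour $\Gamma$ in the open right half-plane that surrounds $\sigma(A^{\uparrow})\subset\overline{\Sec_{\omega}}$ at positive distance (possible since $\omega<\pi/2$), and for $\lambda>0$ define
$$R(\lambda)y:=\frac{1}{2\pi\ii}\int_{\Gamma}(z-A^{\uparrow})^{-1}(\lambda+z+B)^{-1}y\,\d z.$$
Because $\Re(\lambda+z)>0$ on $\Gamma$ and $B$ is sectorial of angle $\pi/2$, one has $\|(\lambda+z+B)^{-1}\|_{\Lop}\leq C|\lambda+z|^{-1}$, while $\|(z-A^{\uparrow})^{-1}\|_{\Lop}\leq C|z|^{-1}$. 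A routine verification would show that $R(\lambda)$ maps into $\dom(A^{\uparrow})\cap\dom(B)$ and is a genuine inverse of $\lambda+A^{\uparrow}+B$ on this intersection.

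Next I would turn to the quantitative estimates. Fixing $y\in\L^q(0,T;(X,\dom(A))_{\theta,q})$, I apply the auxiliary operator $t^{1-\theta}A(t+A)^{-1}$ under the integral defining $A^{\uparrow}R(\lambda)y$, exchange integrations by Fubini, and estimate the pointwise $X$-norm of the resulting symbol. Using the resolvent identity
$$(t+A)^{-1}(z-A)^{-1}=\frac{(t+A)^{-1}+(z-A)^{-1}}{z+t}$$
together with the uniform sectorial bounds $\|A(z-A)^{-1}\|_{\Lop}+\|A(t+A)^{-1}\|_{\Lop}\leq C$ and the bound on $(\lambda+z+B)^{-1}$ above, the integral along $\Gamma$ becomes absolutely convergent in $z$ precisely because of the weight $t^{1-\theta}$. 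Bookkeeping the resulting double integral in $(t,z)$ and applying Minkowski's inequality in the outer $\L^q_\tau$-variable yields
$$\|A^{\uparrow}R(\lambda)y\|_{\L^q(0,T;(X,\dom(A))_{\theta,q})}\leq M\,\|y\|_{\L^q(0,T;(X,\dom(A))_{\theta,q})}$$
with $M$ independent of $\lambda>0$, and the identity $BR(\lambda)=\Id-(\lambda+A^{\uparrow})R(\lambda)$ transfers the same bound to $BR(\lambda)$.

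Finally, since the above forces $\|\lambda R(\lambda)\|_{\Lop}\leq M+1$, I would repeat the construction with $\lambda$ running over an arbitrary complex sector of opening strictly less than $\pi/2$ around the positive real axis, deforming $\Gamma$ so that it continues to separate $\sigma(A^{\uparrow})$ from $-\lambda-\sigma(B)$. This establishes that the part of $C=A^{\uparrow}+B$ in $\L^q(0,T;(X,\dom(A))_{\theta,q})$ is sectorial of angle $\pi/2$, which in combination with the earlier estimates gives the full statement. The hard part is the second step: on $X$ alone the integrand along $\Gamma$ is only $O(|z|^{-1})$ in norm and is logarithmically divergent at infinity; the hidden $|z|^{\theta-1}$ gain furnished by the real interpolation norm is exactly what rescues the integral, and extracting this gain uniformly in $\lambda$ while preserving the full $\L^q$-in-time structure is the delicate Da Prato--Grisvard mechanism that underlies the whole argument.
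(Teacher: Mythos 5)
You are attempting to prove a result that the paper does not prove: it cites it verbatim from \cite{DaPrato-Grisvard} (Thm.~3.11, Lem.~3.5). Your outline is nevertheless of the right general shape --- a Dunford integral built from the commuting resolvents of $A^{\uparrow}$ and $B$, followed by insertion of the auxiliary operators $t^{1-\theta}A(t+A)^{-1}$ to work in the interpolation norm --- and this is indeed the skeleton of the Da Prato--Grisvard argument.

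However, there is a genuine gap exactly where you flag it, and the way you propose to close it does not work. For fixed $t>0$, the weight $t^{1-\theta}$ does \emph{not} improve the decay of the $z$-integrand: $\|A(t+A)^{-1}A(z-A^{\uparrow})^{-1}\|_{\Lop(X)}$ is uniformly bounded but does not tend to zero as $|z|\to\infty$ on $\Gamma$, so the $z$-integrand is still only $O(|z|^{-1})$ in operator norm, and applying Minkowski to pull the $z$-integral out front produces a logarithmically divergent $\int_{\Gamma}\frac{|\d z|}{|\lambda+z|}$. The $\theta$-gain must instead be extracted by coupling the $t$- and $z$-integrations (the classical argument splits the double integral according to whether $t\lessgtr|z|$ and uses the resolvent identity differently in each regime); a single pass of Fubini--Minkowski as you describe loses the gain. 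In addition, the "routine verification" that $R(\lambda)$ is a two-sided inverse is itself not routine for unbounded $A^{\uparrow}$: the telescoping identity $(\lambda+A^{\uparrow}+B)R(\lambda)=\frac{1}{2\pi\ii}\int_{\Gamma}\bigl[(z-A^{\uparrow})^{-1}-(\lambda+z+B)^{-1}\bigr]\d z$ produces two individually divergent integrals, and one must justify the cancellation by a regularization argument. Finally, the inclusion of the endpoint cases $q=1$ and $q=\infty$ --- which is precisely what makes the Da Prato--Grisvard theorem useful here, as opposed to Dore--Venni type results requiring $1<q<\infty$ and UMD geometry --- is not visible anywhere in your estimate strategy. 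As written, your proposal identifies the right method and the right obstruction, but it does not constitute a proof.
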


The application of this theorem to the situation of maximal regularity is as follows. By construction, the solution operator to~\eqref{Eq: Inhomogeneous abstract Cauchy problem} is given by
\begin{align*}
 (A^{\uparrow} + B)^{-1},
\end{align*}
so that the question of whether $A$ has maximal $\L^q$-regularity is about whether $A^{\uparrow} + B$ is invertible and whether
\begin{align*}
 A^{\uparrow} (A^{\uparrow} + B)^{-1} \quad \text{and} \quad B (A^{\uparrow} + B)^{-1}
\end{align*}
are bounded. We present how to derive these properties by means of the theorem of Da Prato and Grisvard, for operators $A$ that are additionally invertible.  Notice that the invertibility of $A$ implies the invertibility of $A^{\uparrow}$. By the argument in Remark~\ref{Rem: Exponential decay}, there exists $\eps > 0$ such that $A^{\uparrow} - \eps$ is sectorial of angle less than $\pi / 2$ as well. Applying Theorem~\ref{Thm: Da Prato-Grisvard} to $C := (A^{\uparrow} - \eps) + B$ shows that there exists $K > 0$ such that
\begin{align*}
 \| B (A^{\uparrow} + B)^{-1} \|_{\Lop(\L^q(0 , T ; (X , \dom(A))_{\theta , q}))} = \| B (\eps + C)^{-1} \|_{\Lop(\L^q(0 , T ; (X , \dom(A))_{\theta , q}))} \leq K
\end{align*}
and
\begin{align*}
 \| A^{\uparrow} (A^{\uparrow} &+ B)^{-1} \|_{\Lop(\L^q(0 , T ; (X , \dom(A))_{\theta , q}))} + \| (A^{\uparrow} + B)^{-1} \|_{\Lop(\L^q(0 , T ; (X , \dom(A))_{\theta , q}))} \\
 &\leq \| (A^{\uparrow} - \eps) (\eps + C)^{-1} \|_{\Lop(\L^q(0 , T ; (X , \dom(A))_{\theta , q}))} + (1 + \eps) \| (\eps + C)^{-1} \|_{\Lop(\L^q(0 , T ; (X , \dom(A))_{\theta , q}))} \\
 &\leq K.
\end{align*}
This shows that there exists a constant $K > 0$ such that whenever $f \in \L^q (0 , T ; (X , \dom(A))_{\theta , q})$, the equation~\eqref{Eq: Inhomogeneous abstract Cauchy problem} has a unique solution $u$ satisfying
\begin{align}
\label{Eq: Da Prato-Grisvard estimate}
 \| u , u^{\prime} , A u \|_{\L^q(0 , T ; (X , \dom(A))_{\theta , q})} \leq K \| f \|_{\L^q(0 , T ; (X , \dom(A))_{\theta , q})}.
\end{align}
 In later sections, we will in particular be interested in the case $q = 1$. 
 \medbreak
 We conclude this section, by shortly discussing how to extend this theory to include inhomogeneous initial values in~\eqref{Eq: Inhomogeneous abstract Cauchy problem} if $q = 1$. 
We have to investigate under which conditions on $x$ the function $t \mapsto A T(t) x$ lies in $\L^1 (0 , T ; (X , \dom(A))_{\theta , 1})$. Now, we use that the real interpolation space $(X , \dom(A))_{\theta , q}$ can be characterized by means of the semigroup $(T(t))_{t \geq 0}$. Indeed, e.g., by~\cite[Thm.~6.2.9]{Haase} it holds (in the special case $1 \leq q < \infty$)
\begin{align}
\label{Eq: Characterization real interpolation spaces}
 \big( X , \dom(A) \big)_{\theta , q} = \Big\{ x \in X : [x]_{\theta , q}^q := \int_0^{\infty} \| t^{1 - \theta} A T(t) x \|_X^q \, \frac{\d t}{t} < \infty \Big\} =: \dom_A (\theta , q)
\end{align}
and the norms
\begin{align*}
 \| x \|_{(X , \dom(A))_{\theta , q}} \quad \text{and} \quad \| x \|_X + [ x ]_{\theta , q} =: \| x \|_{\dom_A (\theta , q)}
\end{align*}
are equivalent. A similar result holds for $q = \infty$ with the obvious changes in the definition of $[x]_{\theta , q}$. In our case $q = 1$, we directly find by the exponential decay and the analyticity of the semigroup (i.e., we use that $\| \e^{\varepsilon s} s A T(s) \|_{\Lop(X)}$ is uniformly bounded with respect to $s > 0$ for some $\varepsilon>0$) that
\begin{align*}
 \int_0^T \| A T(s) x \|_X \, \d s &\leq \int_0^1 s^{\theta} \| s^{1 - \theta} A T(s) x \|_X \, \frac{\d s}{s}
 +\int_1^{\infty}  \| sA T(s) x \|_X \, \frac{\d s}{s} 
  \\
 &\leq \int_0^1 \| s^{1 - \theta} A T(s) x \|_X \, \frac{\d s}s + M \int_1^{\infty} \e^{- \eps s} \, \d s \; \| x \|_X \\
 &\leq M' \| x \|_{\dom_A (\theta , 1)}.
\end{align*}
Moreover, using the analyticity of the semigroup again, followed by occasional applications of Fubini's theorem and the linear substitution rule yields for some constant $M > 0$ that
\begin{align*}
 \int_0^T \int_0^{\infty} \| t^{1 - \theta} A T(t) A T(s) x \|_X \, \frac{\d t}{t} \, \d s &\leq M \int_0^{\infty} \int_0^{\infty} \frac{t^{1 - \theta}}{s + t} \| A T(\tfrac{1}{2}(s + t)) x \|_X \, \frac{\d t}{t} \, \d s \\
 &= M \int_0^{\infty} \int_t^{\infty} \frac{t^{1 - \theta}}{\tau} \| A T(\tfrac{1}{2} \tau) x \|_X \, \d \tau \, \frac{\d t}{t} \\
 &= M \int_0^{\infty} \int_0^{\tau} t^{- \theta} \, \d t \, \| A T(\tfrac{1}{2} \tau) x \|_X  \, \frac{\d \tau}{\tau} \\
 &= \frac{M}{1 - \theta} \int_0^{\infty} \| \tau^{1 - \theta} A T(\tfrac{1}{2} \tau) x \|_X \, \frac{\d \tau}{\tau} \\
 &= \frac{M 2^{1 - \theta}}{1 - \theta} \| x \|_{\dom_A (\theta , 1)}.
\end{align*}
Thus, for all $x \in (X , \dom(A))_{\theta , 1}$, we find that
\begin{align*}
 \| s \mapsto A T(s) x \|_{\L^1 (0 , T ; \dom_A (\theta , 1))} \leq M \| x \|_{\dom_A (\theta , 1)}.
\end{align*}

We formulate the results of this discussion as a corollary of the theorem of Da Prato and Grisvard.

\begin{corollary}
\label{Cor: Initial values}
Let $X$ be a Banach space and let $- A$ be the generator of a bounded analytic semigroup on $X$ with $0 \in \rho(A)$. Let $\theta \in (0 , 1)$ and $0 < T \leq \infty$. Then for all $f \in \L^1 (0 , T ; (X , \dom(A))_{\theta , 1})$ and for all $x \in (X , \dom(A))_{\theta , 1}$ the equation
\begin{align*}
 \left\{ \begin{aligned}
  u^{\prime} (t) + A u(t) &= f(t) && t \in (0 , T), \\
  u(0) &= x
 \end{aligned} \right.
\end{align*}
has a unique solution in the space
\begin{align*}
 \W^{1 , 1} ((0 , T) ; (X , \dom(A))_{\theta , 1}) \cap \L^1 (0 , T ; \dom({\bf A}))
\end{align*}
satisfying
\begin{align*}
  \| u , u^{\prime} , A u \|_{\L^1(0 , T ; (X , \dom(A))_{\theta , 1})} \leq K \big( \| x \|_{(X , \dom(A))_{\theta , 1}} + \| f \|_{\L^1(0 , T ; (X , \dom(A))_{\theta , 1})} \big).
\end{align*}
Here, ${\bf A}$ denotes the part of $A$ on $(X , \dom(A))_{\theta , 1}$.
\end{corollary}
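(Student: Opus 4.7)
The plan is to split the problem into its zero–initial–data part and its free–semigroup part, i.e.\@ to write $u = v + w$ where
\begin{align*}
 w(t) := T(t) x \qquad\text{and}\qquad v'(t) + A v(t) = f(t), \quad v(0) = 0.
\end{align*}
The corollary then reduces to two separate estimates in the norm of $\L^1(0,T; (X,\dom(A))_{\theta,1})$, one for each of the unknowns $v$ and $w$, which combine to yield the claimed bound. Since $0 \in \rho(A)$, Remark~\ref{Rem: Exponential decay} guarantees that $A$ is invertible, so the solution in the class under consideration is unique: any homogeneous solution $u$ would satisfy $u(0)=0$ and $u'+Au=0$, and applying Theorem~\ref{Thm: Da Prato-Grisvard} with $f \equiv 0$ forces $u \equiv 0$.

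For $v$, the bound is immediate from the Da Prato--Grisvard estimate~\eqref{Eq: Da Prato-Grisvard estimate}: applying it with $q=1$ and the underlying Banach space taken to be $X$ (so that the real interpolation space $(X,\dom(A))_{\theta,1}$ is the ambient regularity space of the estimate) yields
\begin{align*}
 \| v, v', A v \|_{\L^1 (0,T; (X,\dom(A))_{\theta,1})} \leq K \| f \|_{\L^1 (0,T; (X,\dom(A))_{\theta,1})}.
\end{align*}
This requires that $-A$ generate an analytic semigroup of angle less than $\pi/2$ with strictly negative spectral bound, which is precisely what Remark~\ref{Rem: Exponential decay} provides.

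For $w(t) = T(t)x$ the two computations already carried out just before the corollary statement do essentially all of the work: they establish that
\begin{align*}
 \| s \mapsto A T(s) x \|_{\L^1 (0,T; \dom_A (\theta,1))} \leq M \| x \|_{\dom_A (\theta,1)}.
\end{align*}
Since $w'(t) = - A T(t) x = - A w(t)$, this single inequality controls both $w'$ and $A w$ in $\L^1 (0,T; (X,\dom(A))_{\theta,1})$, thanks to the norm equivalence~\eqref{Eq: Characterization real interpolation spaces} between $\dom_A (\theta,1)$ and the real interpolation space. It remains to bound $w$ itself in the same norm: using the uniform boundedness of $T(t)$ on $X$ together with the exponential decay furnished by Remark~\ref{Rem: Exponential decay}, one has $\|T(t)x\|_X \lesssim \e^{-\eps t}\|x\|_X$ for some $\eps>0$, which is integrable on $(0,\infty)$; the real interpolation seminorm of $w(t)$ is controlled by Fubini after inserting the characterization~\eqref{Eq: Characterization real interpolation spaces} applied to $T(t)x$ and using the commutation $A T(s) T(t) = T(t) A T(s)$.

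The main obstacle in writing this up carefully is not the existence half, which is essentially a book-keeping exercise assembling the ingredients above, but rather checking that the object produced genuinely lies in $\W^{1,1}((0,T);(X,\dom(A))_{\theta,1}) \cap \L^1(0,T; \dom(\mathbf{A}))$, i.e.\@ that $Aw(t)$ belongs to $(X,\dom(A))_{\theta,1}$ for a.e.\@ $t$ rather than only to $X$. This follows by interpreting the computation leading to the bound on $\|AT(\cdot)x\|_{\L^1(0,T;\dom_A(\theta,1))}$ as asserting that $AT(\cdot)x$ is a Bochner integrable function with values in $\dom_A(\theta,1) = (X,\dom(A))_{\theta,1}$, so that $Aw(t) = AT(t)x$ pointwise a.e.\@ defines an element of the part $\dom(\mathbf{A})$. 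Adding the two estimates gives the announced bound and completes the proof.
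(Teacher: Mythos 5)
Your proof is correct and follows essentially the same approach as the paper: decompose $u = T(\cdot)x + v$, handle $v$ via the Da Prato–Grisvard estimate \eqref{Eq: Da Prato-Grisvard estimate}, and handle $T(\cdot)x$ via the two $\L^1$-in-time bounds established in the discussion immediately preceding the corollary (the paper's "proof" is precisely that discussion, just not set off in a proof environment). Your extra remarks — uniqueness via maximal regularity, the commutation/exponential-decay trick to bound $\|T(\cdot)x\|_{\L^1(0,T;(X,\dom(A))_{\theta,1})}$, and the observation that the double-integral computation is what guarantees $AT(s)x \in (X,\dom(A))_{\theta,1}$ a.e.\@ — are all accurate and fill in details the paper leaves implicit.
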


\section{Study of the Lam\'e operator}\label{s:lame}

This section is dedicated to the study of the linearization of the velocity equation
of  System~\eqref{eq:CNS}, when neglecting the pressure. 
We shall first establish  various
regularity results for the \emph{Lam\'e operator} $L$ given by  \begin{equation}
\label{Eq: Informal Lame}
 L = - \mu \Delta - z \nabla \divergence,
\end{equation}
then look at the properties of the associated semigroup, with particular attention to the maximal $\L^q$-regularity
on Besov spaces  $\B^s_{p , q} (\Omega ; \IC^d)$  \emph{up to the limit value $q=1.$} This is done by employing Amann's technique of inter- and extrapolation spaces.
Throughout the section, 
 $\Omega \subset \IR^d$, $d \geq 1$, is  a smooth bounded domain. 
 The Lebesgue exponent $p$ is supposed to satisfy $1 < p < \infty$, the microlocal parameter $q$ satisfies $1 \leq q \leq \infty$, and we assume that the real number $s$ is such that
\begin{align}
\label{Eq: Space sees no traces}
 - 1 + \frac{1}{p} < s < \frac{1}{p}\cdotp
\end{align}
Recall that~\eqref{Eq: Space sees no traces} ensures that elements of 
\emph{$\B^s_{p , q} (\Omega ; \IC^d)$ have no trace at the boundary.} 
\medbreak

As a start, let us record  the standard $\L^2$-theory of the Lam\'e operator, following the exposition in~\cite{Mitrea_Monniaux}. Let $D u$ denote the Jacobian matrix of a vector field $u,$ and let $\nabla u$ denote its transpose. Define the $\curl$ of $u$ by
\begin{align*}
 \curl u := \frac{1}{\sqrt{2}} (\nabla u - D u).
\end{align*}
Let $\mu > 0$ and $z \in \IC$  and define the sesquilinear form
\begin{align}
\label{Eq: Sesquilinear form Lame}
 \fa : \left\{\begin{aligned}
\W^{1 , 2}_0 (\Omega ; \IC^d) \times \W^{1 , 2}_0 (\Omega ; \IC^d) &\longrightarrow \IC, \\
 (u , v)\qquad &\longmapsto \mu \int_{\Omega} \curl u \cdot \overline{\curl v} \; \d x + (\mu + z) \int_{\Omega} \divergence u \, \overline{\divergence v} \; \d x,
\end{aligned}\right.
\end{align}
where the matrix product is understood component-wise. As the complex parameter $z$ is not standard in usual considerations of the Lam\'e system, we give more details in the subsequent discussion. Under the supplementary condition that   $\mu + \Re(z) > 0,$ the sesquilinear form $\fa$ is bounded and coercive, cf.~\cite[Lem.~3.1]{Mitrea_Monniaux}.  Then, define \emph{the Lam\'e operator on $\L^2$} by
\begin{align*}
 \dom(L_2) &:= \bigl\{ u \in \W^{1 , 2}_0 (\Omega ; \IC^d) : \exists f \in \L^2 (\Omega ; \IC^d) \text{ s.t.\@ } \fa(u , v) = \langle f , v \rangle _{\L^2} \text{ for all } v \in \W^{1 , 2}_0 (\Omega ; \IC^d) \bigr\} \\
 L_2 u &:= f \qquad (u \in \dom(L_2)).
\end{align*}
With this definition, $L_2$ embodies~\eqref{Eq: Informal Lame} in the sense of distributions. Notice that 
$$\C_c^{\infty} (\Omega ; \IC^d) \subset \dom(L_2)\subset \W^{1,2}_0(\Omega;\IC^d),$$  hence
$L_2$ is densely defined. Moreover, $L_2$ is closed and, according to the Lax-Milgram 
theorem, invertible.

Following~\cite[Thm.~4.16 and Thm.~4.18]{McLean} and using a  covering argument, 
it is easy to obtain the following regularity result for $L_2$ 
(with the convention $\W^{0 , 2} (\Omega ; \IC^d) = \L^2 (\Omega ; \IC^d)$).
\begin{proposition}
\label{Prop: Higher regularity for reduced operator on L2}
Let $\mu > 0$ and $z \in \IC$ with $\mu + \Re(z) > 0$. Let $k \in \IN_0$ and $\Omega$ be a bounded domain with smooth boundary. Then, there exists a constant $C > 0$ such that for all $f \in \W^{k , 2} (\Omega ; \IC^d)$ and $u$ given by $u = L_2^{-1} f,$ it holds
\begin{align*}
 \| u \|_{\W^{k + 2 , 2} (\Omega ; \IC^d)} \leq C \| f \|_{\W^{k , 2} (\Omega ; \IC^d)}.
\end{align*}
\end{proposition}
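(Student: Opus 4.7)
The plan is to view the proposition as a standard elliptic regularity statement for the Dirichlet realization of a strongly elliptic constant-coefficient second-order system, and to reduce it to the local regularity theorems of McLean via a partition of unity. Since the coefficients of $L$ are constant and the boundary is $\C^\infty$, no subtle smoothness or freezing-of-coefficients issues arise; the only genuine input is the strong ellipticity of $L$ under the hypothesis $\mu + \Re(z) > 0$.

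First I would verify the Legendre--Hadamard condition on the principal symbol. The symbol acts on $\eta \in \IC^d$ as $L(\xi)\eta = \mu|\xi|^2\eta + z(\xi \cdot \eta)\xi$, so for any $\xi \in \IR^d \setminus \{0\}$,
\[
 \Re\bigl(L(\xi)\eta \cdot \overline\eta\bigr) = \mu |\xi|^2 |\eta|^2 + \Re(z)\,|\xi \cdot \eta|^2 \geq \min\bigl(\mu,\,\mu + \Re(z)\bigr) |\xi|^2 |\eta|^2,
\]
which is uniformly bounded below by a positive constant times $|\xi|^2|\eta|^2$ under the hypothesis $\mu + \Re(z) > 0$. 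Thus $L$ is strongly elliptic in the Legendre--Hadamard sense; this is of course nothing but the computation that underlies the coerciveness of $\fa$ on $\W^{1,2}_0$ already used to construct $L_2$.

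Next I would run the localization argument. Cover $\overline{\Omega}$ by finitely many balls $B_0,\dots,B_N$ with $\overline{B_0} \subset \Omega$ and $B_1,\dots,B_N$ centered on $\partial\Omega$ and small enough that each can be straightened by a $\C^\infty$ diffeomorphism $\Phi_j$ onto a half-ball. Pick a subordinate $\C^\infty$ partition of unity $\{\chi_j\}$. On $B_0$ the function $\chi_0 u$ satisfies
\[
 L(\chi_0 u) = \chi_0 f + [L,\chi_0] u,
\]
in which the commutator involves only derivatives of $u$ of order at most one and is therefore of lower order. Interior elliptic regularity, that is McLean's Theorem~4.16, then yields $\chi_0 u \in \W^{k+2,2}$ with an estimate in terms of $\|f\|_{\W^{k,2}}$ and lower-order Sobolev norms of $u$. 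On each boundary patch $B_j$, the pushforward $v_j := (\chi_j u)\circ \Phi_j^{-1}$ is compactly supported in a half-ball, vanishes on the flat portion of its boundary (since $u \in \W^{1,2}_0$), and solves a strongly elliptic Dirichlet problem with $\C^\infty$ coefficients inherited from the chart. Applying McLean's Theorem~4.18 to $v_j$ and pulling back yields the same estimate for $\chi_j u$ on $B_j$. Summing the local estimates over $j = 0,\dots,N$ produces a global $\W^{k+2,2}$ bound for $u$.

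The one place where care is needed is the bookkeeping of the lower-order commutator terms. For $k = 0$ the base case $\|u\|_{\W^{2,2}} \leq C \|f\|_{\L^2}$ follows because all lower-order norms of $u$ appearing on the right-hand side are controlled by $\|u\|_{\W^{1,2}} \leq C' \|f\|_{\L^2}$, which is the Lax--Milgram estimate for $\fa$ used to define $L_2$. For general $k \geq 1$ one either proceeds by induction on $k$, absorbing each commutator contribution into the estimate already proved at order $k-1$, or invokes McLean's higher-order versions of the two regularity theorems directly. Because the coefficients of $L$ are constant and the chart maps $\Phi_j$ are smooth, no regularity loss appears in the transport or commutator terms, and the induction closes cleanly; the main obstacle is therefore bookkeeping rather than any substantive analytic difficulty, which is exactly why the authors describe the proof as ``easy''.
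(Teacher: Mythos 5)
Your approach is exactly the one the paper intends: the paper gives no explicit proof of this proposition, only the remark that it follows from McLean's Theorems~4.16 and 4.18 together with a covering argument. Your write-up supplies the verification of the Legendre--Hadamard condition and the partition-of-unity/commutator/induction bookkeeping that make that remark precise, and it is correct.
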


Having some $\L^2$-mapping properties of the Lam\'e operator at our disposal, we focus now on the $\L^p$-theory. If $2 < p < \infty$, then we define the Lam\'e operator on $\L^p (\Omega ; \IC^d)$, denoted by $L_p$, to be the part of $L_2$ in $\L^p (\Omega ; \IC^d)$. Note that $L_p$ is a closed operator and that $\C_c^{\infty} (\Omega ; \IC^d)$ is included in $\dom(L_p)$. \par
For $1 < p < 2$, define $L_p$ to be the closure of $L_2$ in $\L^p (\Omega ; \IC^d)$ whenever $L_2$ is closable in this space. That $L_2$ is indeed closable in $\L^p (\Omega ; \IC^d)$ is deduced by the following argument: Since $L_2$ is closed and densely defined, its $\L^2$-adjoint $L_2^*$ is well-defined, densely defined, and closed. Clearly, this operator is the realization of~\eqref{Eq: Informal Lame} with $z$ replaced by its complex conjugate $\overline{z}$. Now, the fact that $L_2$ is closable in $\L^p (\Omega ; \IC^d)$ stems from the following lemma\footnote{We use the following notation and convention:
the antidual space of a Banach space $X$ (i.e., the space of all antilinear mappings $X \to \IC$) is denoted by $X^{\prime}$. The adjoint of a densely defined operator $A$ is denoted by $A^{\prime}$. In the particular situation where $X = \L^p (\Omega ; \IC^d)$ and $A : \dom(A) \subset \L^p (\Omega ; \IC^d) \to \L^p (\Omega ; \IC^d)$ is densely defined, the adjoint operator $A^{\prime}$ is an operator $A^{\prime} : \dom(A^{\prime}) \subset \L^p (\Omega ; \IC^d)^{\prime} \to \L^p (\Omega ; \IC^d)^{\prime}$. The corresponding adjoint operator on $\L^{p^{\prime}} (\Omega ; \IC^d)$ (where $p^{\prime}$ stands for  the H\"older conjugate exponent of $p$) is denoted by $A^*$. Thus, if $\Phi$ denotes the canonical isomorphism $\L^{p^{\prime}} (\Omega ; \IC^d) \to \L^{p^{\prime}} (\Omega ; \IC^d)^{\prime}$, then $A^*$ is given by
\begin{align}
\label{Eq: Lp adjoint}
 A^* := \Phi^{-1} A^{\prime} \Phi \quad \text{with domain} \quad \dom(A^*) := \{ u \in \L^{p^{\prime}} (\Omega ; \IC^d) : \Phi u \in \dom(A^{\prime}) \}.
\end{align}}
that can be proved by basic annihilator relations and is partly presented in~\cite[Lem.~2.8]{Tolksdorf}.
\begin{lemma}
\label{Lem: Duality}
Let $1 < p < 2$. Then $\dom(L_2)$ is dense in $\L^p (\Omega ; \IC^d)$. Moreover, $L_2$ is closable in $\L^p (\Omega ; \IC^d)$ if and only if 
the part $(L_2^*)_{p^{\prime}}$ of $L_2^*$ in $\L^{p^\prime} (\Omega ; \IC^d)$ is densely defined. In this case, it holds $L_p^* = (L_2^*)_{p^{\prime}}$ and $(L_2^*)_{p^{\prime}}^* = L_p$.
\end{lemma}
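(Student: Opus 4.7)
\emph{Step 1 (density).} This is essentially free: the paragraph preceding the lemma records $\C_c^{\infty}(\Omega;\IC^d)\subset\dom(L_2)$, and since $\C_c^{\infty}(\Omega;\IC^d)$ is dense in $\L^p(\Omega;\IC^d)$, so is $\dom(L_2)$. The chain of inclusions $\dom(L_2)\subset\L^2(\Omega;\IC^d)\hookrightarrow\L^p(\Omega;\IC^d)$ (using $p<2$ and boundedness of $\Omega$) makes the statement sensible in the first place.

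\emph{Step 2 (identification of the $\L^p$-adjoint).} I regard $L_2$ temporarily as a densely defined unbounded operator $\widetilde L$ on $\L^p(\Omega;\IC^d)$ with its original domain, and I compute the adjoint $\widetilde L^*$ in the sense of the footnote. Unfolding the definition, a vector $v\in\L^{p'}(\Omega;\IC^d)$ belongs to $\dom(\widetilde L^*)$ with $\widetilde L^* v=w$ if and only if $w\in\L^{p'}$ and
\[
\int_{\Omega}(L_2 u)\cdot\overline{v}\,\d x \;=\; \int_{\Omega} u\cdot\overline{w}\,\d x \qquad\text{for all }u\in\dom(L_2).
\]
Since $p'>2$ and $\Omega$ is bounded, one has $\L^{p'}(\Omega;\IC^d)\hookrightarrow\L^2(\Omega;\IC^d)$, hence $v,w\in\L^2$ and the identity above is simply $\langle L_2 u,v\rangle_{\L^2}=\langle u,w\rangle_{\L^2}$ for all $u\in\dom(L_2)$. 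By the very definition of the Hilbert space adjoint $L_2^*$, this is equivalent to $v\in\dom(L_2^*)$ and $L_2^* v=w$; because $w\in\L^{p'}$, it is in turn equivalent to $v\in\dom((L_2^*)_{p'})$ with $(L_2^*)_{p'} v=w$. Therefore $\widetilde L^*=(L_2^*)_{p'}$ as unbounded operators on $\L^{p'}(\Omega;\IC^d)$.

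\emph{Step 3 (closability via the bi-adjoint formula).} The spaces $\L^p$ and $\L^{p'}$ are reflexive for $1<p<\infty$, so the classical theorem applies (see, e.g., Kato's perturbation monograph): a densely defined operator $T$ on a reflexive Banach space is closable if and only if $T^*$ is densely defined, in which case $\overline T=T^{**}$. Combined with Step~2, this yields that $L_2$ is closable in $\L^p$ precisely when $(L_2^*)_{p'}$ is densely defined in $\L^{p'}$; setting $L_p:=\overline{\widetilde L}$, one then has $L_p^*=\widetilde L^*=(L_2^*)_{p'}$. Finally, $(L_2^*)_{p'}$ is the part of the closed operator $L_2^*$ in $\L^{p'}$, hence is itself closed, and a second application of the bi-adjoint formula gives $((L_2^*)_{p'})^*=L_p^{**}=L_p$.

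The only non-formal ingredient lives in Step~2, where the embedding $\L^{p'}\hookrightarrow\L^2$ (coming from $p<2$ and boundedness of $\Omega$) is what collapses the $\L^p$- and $\L^2$-adjoint computations to a single identity. Once this has been observed, the closability criterion and bi-adjoint formula in reflexive Banach spaces dispatch the rest, and I do not anticipate any genuine obstacle.
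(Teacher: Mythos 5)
Your proof is correct. The paper does not give a self-contained argument for this lemma (it defers to ``basic annihilator relations'' and Tolksdorf's Lemma~2.8), and the route you take is precisely that standard machinery: the bi-adjoint theorem for densely defined operators on reflexive Banach spaces, whose proof is the annihilator relation for graphs. Your Step~2 correctly isolates the one non-formal point, namely that the embedding $\L^{p'}(\Omega;\IC^d)\hookrightarrow\L^2(\Omega;\IC^d)$ (from $p<2$ and $\Omega$ bounded) makes the $\L^p$--$\L^{p'}$ antiduality pairing of $L_2u$ against a test vector $v\in\L^{p'}$ coincide with the $\L^2$ inner product, so that the $\L^p$-adjoint of $L_2$ (viewed with its $\L^2$-domain as an operator in $\L^p$) is exactly the part $(L_2^*)_{p'}$ of $L_2^*$ in $\L^{p'}$. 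After that, the chain $L_p^*=(\overline{\widetilde L})^*=\widetilde L^*=(L_2^*)_{p'}$ and $((L_2^*)_{p'})^*=\widetilde L^{**}=\overline{\widetilde L}=L_p$ follows formally; no gap.
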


Having the $\L^p$-realization of $L_2$ at hand, we turn to the regularity theory of $L_p$ for $1 < p < \infty$. 
The counterpart of Proposition~\ref{Prop: Higher regularity for reduced operator on L2}
(that is proved in Appendix) reads:
\begin{proposition}
\label{Prop: Regularity for reduced operator on Lp}
Let $\mu > 0$ and $z \in \IC$ with $\mu + \Re(z) > 0$. Let $k \in \IN_0$ and $\Omega$ be a bounded domain with smooth boundary. For all $1 < p < \infty$ it holds $0 \in \rho(L_p)$ and $\dom(L_p^k)$ is continuously embedded into $\W^{2 k , p} (\Omega ; \IC^d)$. Moreover, in the case $2 \leq p < \infty$ there exists a constant $C > 0$ such that for all $f \in \W^{k , p} (\Omega ; \IC^d)$ and $u$ given by $u = L_p^{-1} f$ it holds
\begin{align}
\label{Eq: Elliptic regularity}
 \| u \|_{\W^{k + 2 , p} (\Omega ; \IC^d)} \leq C \| f \|_{\W^{k , p} (\Omega ; \IC^d)}.
\end{align}
In the case $1 < p < 2$ there exists a constant $C > 0$ such that for all $f \in \dom(L_p^k)$ it holds
\begin{align}
\label{Eq: Elliptic regularity 2}
 \| u \|_{\W^{2 k + 2 , p} (\Omega ; \IC^d)} \leq C \| f \|_{\W^{2 k , p} (\Omega ; \IC^d)}.
\end{align}
In particular, for any $1<p<\infty,$ we have
\begin{equation}\label{eq:domaineLp}
\dom(L_p)=\W^{2,p}(\Omega;\IC^d)\cap\W^{1,p}_0(\Omega;\IC^d).
\end{equation}
\end{proposition}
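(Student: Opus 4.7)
The plan is to leverage the $\L^2$-theory provided by Proposition~\ref{Prop: Higher regularity for reduced operator on L2} together with classical $\L^p$-regularity for strongly elliptic systems, and then pass from $p\geq 2$ to $p<2$ by duality through Lemma~\ref{Lem: Duality}. The hypothesis $\mu>0$ and $\mu+\Re z>0$ ensures that $L$ is strongly elliptic (in the sense of the Legendre--Hadamard condition), so the Agmon--Douglis--Nirenberg theory applies: for every $1<p<\infty$ and $k\in\IN_0$, any $u\in\W^{1,p}_0(\Omega;\IC^d)$ solving $Lu=f$ with $f\in\W^{k,p}(\Omega;\IC^d)$ in the distributional sense belongs to $\W^{k+2,p}\cap\W^{1,p}_0$, with the a priori bound $\|u\|_{\W^{k+2,p}}\leq C(\|f\|_{\W^{k,p}}+\|u\|_{\L^p})$. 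This \emph{local-in-$p$} regularity statement is the workhorse of the proof and would be either cited from a standard reference or sketched in the Appendix.

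For $p\geq 2,$ since $\L^p(\Omega)\hookrightarrow\L^2(\Omega)$, the definition of $L_p$ as the part of $L_2$ on $\L^p$ combined with Proposition~\ref{Prop: Higher regularity for reduced operator on L2} and the elliptic regularity above gives surjectivity: for $f\in\L^p$, we set $u:=L_2^{-1}f$, observe that $u\in\W^{1,2}_0\cap\W^{2,2}$, and apply the elliptic bound to upgrade this to $u\in\W^{2,p}\cap\W^{1,p}_0\subset\dom(L_p)$. Injectivity is inherited from $L_2\supset L_p$, so $0\in\rho(L_p)$, which together with the elliptic estimate identifies $\dom(L_p)=\W^{2,p}\cap\W^{1,p}_0$ and yields~\eqref{Eq: Elliptic regularity} by straightforward induction on $k$ (each application of $L$ drops two derivatives in $\W^{k,p}$). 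Similarly, the embedding $\dom(L_p^k)\hookrightarrow\W^{2k,p}$ is an induction on $k$: if $u\in\dom(L_p^k)$ then $L_p u\in\dom(L_p^{k-1})\subset\W^{2(k-1),p}$, and one more application of elliptic regularity gives $u\in\W^{2k,p}$.

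For $1<p<2$, we argue by duality. Lemma~\ref{Lem: Duality} identifies $L_p^*$ with the part of $L_2^*$ in $\L^{p'}$, and $L_2^*$ is the realization on $\L^2$ of the same Lam\'e operator with $z$ replaced by $\bar z$, still satisfying $\mu+\Re\bar z>0$. Since $p'>2$, the previous paragraph applies to this adjoint operator: it is invertible on $\L^{p'}$, so $0\in\rho(L_p^*)$, and hence $0\in\rho(L_p)$ by standard duality. The inclusion $\dom(L_p)\subset\W^{2,p}\cap\W^{1,p}_0$ is obtained by applying the $\L^p$-elliptic regularity statement above to $u\in\dom(L_p)$ with $Lu\in\L^p$; for the reverse inclusion, one approximates $u\in\W^{2,p}\cap\W^{1,p}_0$ by $u_n\in\C_c^\infty(\Omega;\IC^d)\subset\dom(L_2)$ in $\W^{2,p}$, which makes $L_2 u_n\to Lu$ in $\L^p$, placing $u$ in the $\L^p$-closure of $L_2$, i.e.\ in $\dom(L_p)$. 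Iterating the elliptic bound then gives~\eqref{Eq: Elliptic regularity 2} and the higher-order domain embeddings.

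The main obstacle is really just the invocation of $\L^p$-elliptic regularity for the Lam\'e system with complex parameter $z$: one must check carefully that the Legendre--Hadamard condition holds with the hypothesis $\mu+\Re z>0$ (so that Agmon--Douglis--Nirenberg applies), and that the constants in the a priori estimate depend continuously on $z$. Once this is in place, every other step is either the $\L^2$-result of Proposition~\ref{Prop: Higher regularity for reduced operator on L2}, a duality argument provided by Lemma~\ref{Lem: Duality}, or a routine induction; no additional harmonic analysis is required.
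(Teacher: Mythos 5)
Your proposal follows the same overall strategy as the paper's proof -- verify the Agmon--Douglis--Nirenberg ellipticity conditions for the symbol $\mu|\xi|^2\Id+z\,\xi\otimes\xi$ to obtain the a priori bound, combine the $\L^2$-theory with Sobolev embeddings and a bootstrap for $p\geq 2$, and pass to $p<2$ through Lemma~\ref{Lem: Duality}. Your duality argument for $0\in\rho(L_p)$ when $p<2$ (read off directly from $0\in\rho(L_p^*)=\rho((L_2^*)_{p'})$ via the standard identity $\sigma(A^*)=\overline{\sigma(A)}$) is a slicker alternative to the compactness-and-closed-range argument the paper uses, and it is correct.

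However, the treatment of $\dom(L_p)$ for $1<p<2$ has two genuine gaps. First, the reverse inclusion rests on a false density claim: $\C_c^\infty(\Omega;\IC^d)$ is \emph{not} dense in $\W^{2,p}(\Omega;\IC^d)\cap\W^{1,p}_0(\Omega;\IC^d)$ in the $\W^{2,p}$-norm. The $\W^{2,p}$-closure of $\C_c^\infty$ is $\W^{2,p}_0$, in which the normal derivative also vanishes at $\partial\Omega$; for instance, $x(1-x)$ on $(0,1)$ lies in $\W^{2,p}\cap\W^{1,p}_0$ for all $p$ but cannot be $\W^{2,p}$-approximated by compactly supported functions. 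The correct dense subclass is $\dom(L_2)=\W^{2,2}\cap\W^{1,2}_0$, and the rest of your argument survives with that replacement. Second, the forward inclusion $\dom(L_p)\subset\W^{2,p}\cap\W^{1,p}_0$ cannot be obtained by "applying the $\L^p$-elliptic regularity statement to $u\in\dom(L_p)$'' directly: since $L_p$ is defined as the \emph{closure} of $L_2$ in $\L^p$, a generic $u\in\dom(L_p)$ is only known to be an $\L^p$-limit of elements of $\dom(L_2)$, and in particular one does not know a priori that $u\in\W^{1,p}_0$, which your stated ADN regularity theorem requires. The paper closes this gap by applying the a priori estimate to differences $u_n-u_m$ of the approximating sequence (each of which lies in $\W^{2,2}\hookrightarrow\W^{2,p}$), concluding that $(u_n)$ is Cauchy in $\W^{2,p}$, and then passing to the limit. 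You would need this approximation step, not a one-shot application of the regularity theorem.
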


We aim at proving  that $-L_p$ generates a bounded analytic semigroup on a wide family 
of Besov spaces.  Our starting point is the following proposition, which is a consequence of~\cite[Thm.~1.3]{Mitrea_Monniaux} and~\cite[App.~A]{Danchin}.
\begin{proposition}
\label{Prop: Analyticity on Lp}
Let $\mu , \mu^{\prime} \in \IR$ with $\mu > 0$ and $\mu + \mu^{\prime} > 0$, $1 < p < \infty$, and $L_p$ be the Lam\'e operator with coefficients $\mu$ and $z = \mu^{\prime}$. Then, $- L_p$ generates a bounded analytic semigroup on $\L^p (\Omega ; \IC^d)$.
\end{proposition}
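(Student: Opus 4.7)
The plan is to piece together two substantive existing results. The hypotheses $\mu > 0$ and $\mu + \mu' > 0$ (with $z = \mu'$ real) ensure $\mu + \Re(z) > 0$, so by the discussion preceding Proposition~\ref{Prop: Higher regularity for reduced operator on L2} the sesquilinear form $\fa$ from~\eqref{Eq: Sesquilinear form Lame} is bounded and coercive on $\W^{1,2}_0(\Omega;\IC^d)$. Classical form methods then give that $-L_2$ generates a bounded analytic semigroup on $\L^2(\Omega;\IC^d)$. The theorem of Mitrea and Monniaux~\cite[Thm.~1.3]{Mitrea_Monniaux} extends this conclusion to a neighborhood of exponents around $p = 2$, yielding some $\delta > 0$ such that $-L_p$ generates a bounded analytic semigroup on $\L^p(\Omega;\IC^d)$ whenever $|p - 2| < \delta$.

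To reach the full range $1 < p < \infty$, I would invoke the argument of \cite[App.~A]{Danchin}. The strategy is to establish a uniform resolvent bound
\begin{align*}
\|\lambda(\lambda - L_p)^{-1}\|_{\Lop(\L^p(\Omega;\IC^d))} \leq C \qquad (\lambda \in \IC \setminus \overline{\Sec_{\omega'}})
\end{align*}
for some fixed $\omega' < \pi/2$. One decomposes $L_p = -\mu \Delta - \mu' \nabla \divergence$ and treats the corrective term $-\mu' \nabla \divergence$ as a non-lower-order perturbation of the Dirichlet Laplacian, for which the $\L^p$ analytic semigroup theory is standard. The needed bounds on $\nabla \divergence (\lambda - L_p)^{-1}$ are derived from the $\L^2$ theory combined with the elliptic regularity of Proposition~\ref{Prop: Regularity for reduced operator on Lp}, and a duality argument based on Lemma~\ref{Lem: Duality} symmetrizes the treatment of $1 < p < 2$ and $2 < p < \infty$. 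Once these resolvent bounds are in hand, the characterization of generators of bounded analytic semigroups stated after Remark~\ref{Rem: Analyticity} (applied with $z = 0$, since $0 \in \rho(L_p)$ by Proposition~\ref{Prop: Regularity for reduced operator on Lp}) delivers the conclusion.

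The main obstacle is precisely the extrapolation step: the term $\mu' \nabla \divergence$ is not lower-order in the Kato sense, so standard perturbation theory does not apply directly. The arguments recorded in \cite[App.~A]{Danchin} circumvent this by exploiting the specific coupling between the divergence-free and curl-free parts of the flow together with commutator and duality estimates. Boundedness of the semigroup (as opposed to mere analyticity) is then immediate from sectoriality of angle strictly less than $\pi/2$; and since $0 \in \rho(L_p)$, Remark~\ref{Rem: Exponential decay} in fact gives exponential decay of the semigroup, a property that will be exploited later.
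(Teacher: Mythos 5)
The paper does not actually prove this proposition; it simply declares it a consequence of \cite[Thm.~1.3]{Mitrea_Monniaux} and \cite[App.~A]{Danchin}, and you cite exactly the same two references, so at the level of the load-bearing ingredients your approach agrees with the paper's. Your reconstruction of what happens inside Danchin's appendix, however, does not reflect the actual mechanism and, taken literally, does not close. The extrapolation to all $1 < p < \infty$ on a smooth bounded domain is obtained there by verifying parameter-ellipticity (the Lopatinski--Shapiro/Agmon condition) for the Lam\'e system with Dirichlet boundary conditions in a sector of angle exceeding $\pi/2$; this yields the bound $\|\lambda(\lambda - L_p)^{-1}\|_{\Lop(\L^p)} \leq C$ directly, with no perturbation of the Dirichlet Laplacian involved.

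Two concrete problems with the narrative you supply. First, the perturbative framing cannot close under the mere hypothesis $\mu + \mu' > 0$: the operator $\nabla\divergence\,(\lambda + \mu\Delta_D)^{-1}$ has $\L^p$-operator norm of order one (its symbol in the whole space is $\xi\otimes\xi/(\lambda/\mu + |\xi|^2)$), so a Neumann series would require $|\mu'|/\mu$ small, which is not assumed; and the decoupling into divergence-free and curl-free parts is of limited use on a bounded domain because the Helmholtz projection does not commute with the Dirichlet boundary condition. Second, your appeal to Proposition~\ref{Prop: Regularity for reduced operator on Lp} and Lemma~\ref{Lem: Duality} to produce sectorial resolvent bounds is a jump: Proposition~\ref{Prop: Regularity for reduced operator on Lp} is an elliptic estimate at $\lambda = 0$ only, and on its own does not yield a bound on $\lambda(\lambda - L_p)^{-1}$ that is uniform over a sector. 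You flag the extrapolation step yourself as ``the main obstacle,'' and that is precisely where your sketch stops being an argument; the right reference is cited, but the mechanism you attribute to it is not the one that actually does the work.
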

We want to  prove a similar result but at the scale of  a `negative' regularity space that may be regarded
as $\W^{-2,p}.$ 
To proceed, we need to introduce  the following canonical isomorphism
(where the dependency on $r$ is omitted for notational simplicity):
\begin{equation}\label{eq:Fi}
 \Phi : \L^{r^{\prime}} (\Omega ; \IC^d) \to \L^r (\Omega ; \IC^d)^{\prime}, \quad \Phi f := \bigg[ g \mapsto \int_{\Omega} f \cdot \overline{g} \; \d x \bigg]\cdotp
\end{equation}
Recall that $(L^*_2)_{p^{\prime}}$ is the Lam\'e operator with $z$ replaced by $\overline{z}$ on $\L^{p^{\prime}} (\Omega ; \IC^d)$. Since $\dom((L^*_2)_{p^{\prime}})$ is a closed subspace of $\W^{2 , p^{\prime}} (\Omega ; \IC^d)$, the domain $\dom((L^*_2)_{p^{\prime}})$ is a Banach space when endowed with the $\W^{2 , p^{\prime}}$-norm and $(L^*_2)_{p^{\prime}} \in \Isom(\dom((L^*_2)_{p^{\prime}}) , \L^{p^{\prime}} (\Omega ; \IC^d))$. Denote the dual operator from $\L^{p^{\prime}} (\Omega ; \IC^d)^{\prime}$ onto $\dom((L^*_2)_{p^{\prime}})^{\prime}$ by a $^\circ$, i.e.,
\begin{align*}
 \widetilde{\cL}_p := (L^*_2)_{p^{\prime}}^{\circ} \in \Isom(\L^{p^{\prime}} (\Omega ; \IC^d)^{\prime} , \dom((L^*_2)_{p^{\prime}})^{\prime})
\end{align*}
and define the \emph{extrapolation $\cL_p$ of $L_p$} on the ground space
$X_p^{-1} :=\dom((L_2^*)_{p^{\prime}})^{\prime}$ to be
\begin{equation}\label{eq:cLp}
 \cL_p : \dom(\cL_p) \subset X_p^{-1} \to X_p^{-1}, \quad \cL_p u := \widetilde{\cL}_p u \with \dom(\cL_p) := \L^{p^{\prime}} (\Omega ; \IC^d)^{\prime}.
\end{equation}
Observe that $\widetilde{\cL}_p$ is defined as the adjoint of the \textit{bounded} operator $(L_2^*)_{p^{\prime}} : \dom((L^*_2)_{p^{\prime}}) \to \L^{p^{\prime}} (\Omega ; \IC^d)$. This should be distinguished from the adjoint operator $(L^*_2)_{p^{\prime}}^{\prime} : \dom ((L^*_2)_{p^{\prime}}^{\prime}) \subset \L^{p^{\prime}} (\Omega ; \IC^d)^{\prime} \to \L^{p^{\prime}} (\Omega ; \IC^d)^{\prime}$, where $(L^*_2)_{p^{\prime}}$ is regarded as a closed and densely defined operator on $\L^{p^{\prime}} (\Omega ; \IC^d)$. The links between all these definitions are clarified in 
Appendix (see Lemma~\ref{Lem: Extrapolation lemma}).

\medbreak
The previous lemma allows us to define an extrapolation $\cL_p$ of the operator $L_p$ to the larger ground space $X_p^{-1} :=\dom((L_2^*)_{p^{\prime}})^{\prime}$, which can be regarded as a $\W^{-2 , p}$-space. In particular, Lemma~\ref{Lem: Extrapolation lemma}~\eqref{Item: Similarity of operators} allows us to write\footnote{We endow the product of two operators $A$ and $B$ with its maximal domain of definition, i.e., $\dom(A B) := \{ u \in \dom(B) : B u \in \dom(A) \}$.}
  $$\cL_p = T L_p T^{-1},$$ where $T := \widetilde{\cL}_p \Phi$  is an isomorphism from $\L^p (\Omega ; \IC^d)$ onto $X_p^{-1}$. This will enable us  to transport all kinds of functional analytic properties from $L_p$ to $\cL_p$. Finally, Lemma~\ref{Lem: Extrapolation lemma}~\eqref{Item: Recovering original operator} allows us to recover $L_p$ (modulo the canonical isomorphism $\Phi$) from $\cL_p$ as its part on $\L^{p^{\prime}} (\Omega ; \IC^d)^{\prime}$, so that $\cL_p$ can indeed be regarded as an extrapolation of $L_p$.
 This eventually leads to  the following proposition.
\begin{proposition}
Let $\mu , \mu^{\prime} \in \IR$ with $\mu > 0$ and $\mu + \mu^{\prime} > 0$, $1 < p < \infty$, and $\cL_p$ be the Lam\'e operator with coefficients $\mu$ and $z = \mu^{\prime}$ on $X^{-1}_p$. Then, $- \cL_p$ generates a bounded analytic semigroup on $X^{-1}_p$.
\end{proposition}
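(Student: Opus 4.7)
The plan is to transport the generation property from $L_p$ to $\cL_p$ via the similarity relation $\cL_p = T L_p T^{-1}$ recorded just before the statement, where $T := \widetilde{\cL}_p \Phi$ is a topological isomorphism from $\L^p(\Omega;\IC^d)$ onto $X_p^{-1}$ (as given by Lemma~\ref{Lem: Extrapolation lemma}). Since $-L_p$ already generates a bounded analytic semigroup on $\L^p(\Omega;\IC^d)$ by Proposition~\ref{Prop: Analyticity on Lp}, everything should then come for free, up to a careful verification of dense domain on $X_p^{-1}$.

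Concretely, denote by $(T_{L_p}(z))_{z\in\Sec_\theta\cup\{0\}}$ the bounded analytic semigroup generated by $-L_p$ on $\L^p(\Omega;\IC^d)$, with some angle $\theta\in(0,\pi/2]$, and define on $X_p^{-1}$ the family
$$
S(z) := T \, T_{L_p}(z) \, T^{-1} \qquad (z \in \Sec_\theta \cup \{0\}).
$$
The four defining properties of a bounded analytic semigroup in Definition~\ref{Def: Analytic semigroup} transfer one-to-one: the semigroup law $S(z+w)=S(z)S(w)$ and $S(0)=\Id$ are algebraic consequences of the corresponding identities for $T_{L_p}$; analyticity of $z\mapsto S(z)$ on $\Sec_\theta$ follows from the analyticity of $z\mapsto T_{L_p}(z)$ composed with the bounded linear maps $T$ and $T^{-1}$; the uniform bound $\sup_{z\in\Sec_\vartheta}\|S(z)\|_{\Lop(X_p^{-1})}\leq \|T\|\|T^{-1}\|\sup_{z\in\Sec_\vartheta}\|T_{L_p}(z)\|_{\Lop(\L^p)}$ is immediate; and strong continuity at $0$ on $X_p^{-1}$ follows from strong continuity of $T_{L_p}$ at $0$ on $\L^p$, again because $T,T^{-1}$ are continuous. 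A short direct computation on the abstract generator identifies the generator of $S$ as $-TL_pT^{-1}=-\cL_p$, with domain $T\dom(L_p)$ which, by Lemma~\ref{Lem: Extrapolation lemma}, coincides with the claimed $\dom(\cL_p)=\L^{p'}(\Omega;\IC^d)'$.

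The only step that deserves a small verification is the density of $\dom(\cL_p)$ in $X_p^{-1}$, which is implicit in the statement that $-\cL_p$ is a generator. This follows from duality: $\dom((L_2^*)_{p'})$ is continuously and densely embedded in $\L^{p'}(\Omega;\IC^d)$ (it contains $\C_c^\infty(\Omega;\IC^d)$), so the adjoint inclusion $\L^{p'}(\Omega;\IC^d)'\hookrightarrow\dom((L_2^*)_{p'})'=X_p^{-1}$ is continuous and injective, hence of dense range for $1<p<\infty$ by reflexivity of both spaces. Equivalently, density follows from density of $\dom(L_p)$ in $\L^p(\Omega;\IC^d)$ (since $-L_p$ is already a generator) by applying the isomorphism $T$. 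I do not anticipate any real obstacle: the proposition is essentially a routine transport of sectoriality and analyticity through the extrapolation isomorphism, and the framework of Section~\ref{Some background from semigroup theory} has already been set up precisely for this purpose.
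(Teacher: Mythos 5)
Your proof is correct and takes precisely the route the paper intends: the paper states this proposition without a displayed proof, relying on the preceding remark that Lemma~\ref{Lem: Extrapolation lemma}~\eqref{Item: Similarity of operators} gives $\cL_p = T L_p T^{-1}$ for the isomorphism $T:=\widetilde{\cL}_p\Phi$, so that generation transports by similarity from $L_p$ (Proposition~\ref{Prop: Analyticity on Lp}). Your explicit verification of the semigroup axioms for $S(z):=T\,T_{L_p}(z)\,T^{-1}$ and the density argument (cleanest via $T\dom(L_p)=\dom(\cL_p)$ being the image under the isomorphism $T$ of the dense set $\dom(L_p)\subset\L^p$) simply fills in the details the paper leaves implicit.
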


Having a bounded analytic semigroup on various function spaces at our disposal, we want to deduce the maximal $\L^q$-regularity of the Lam\'e operator on suitable intermediate spaces. For this purpose, we briefly introduce the setting of Da Prato and Grisvard established in~\cite{DaPrato-Grisvard}.

For $1 < p < \infty,$ define the spaces
 \begin{align*}
 X^k_p := \Phi \dom(L_p^k) \qquad (k \in \IN_0).
\end{align*}
 Endow $X^k_p$ with the norm
\begin{align*}
 \| \fu \|_{X^k_p} := \| L_p^k \Phi^{-1} \fu \|_{\L^p (\Omega ; \IC^d)} \qquad (\fu \in X_p^k).
\end{align*}
Observe that, by construction, all spaces $X_p^k$ (including $X_p^{-1}$) are complete.
\medbreak
 For $- 1 < s < 1$, $0 < t < 2$, and $1 \leq q \leq \infty$ define the following intermediate spaces via real interpolation:
\begin{align*}
 X^s_{p , q} := \big( X_p^{-1} , X_p^1 \big)_{(s + 1) / 2 , q} \qquad \text{and} \qquad Y^t_{p , q} := \big( X_p^0 , X_p^2 \big)_{t / 2 , q}.
\end{align*}
Note that for all of the parameters above, the following continuous inclusions hold
\begin{align}
\label{Eq: Embedding Y^t}
 X^s_{p , q} \hookrightarrow X^{-1}_p \qquad \text{and} \qquad Y^t_{p , q} \hookrightarrow X^0_p = \dom(\cL_p).
\end{align}
For some combinations of the parameters, the spaces $X^s_{p , q}$ and $Y^t_{p , q}$ are calculated as follows. To formulate the proposition, introduce, for $1 < p < \infty$, $1 \leq q \leq \infty$, and $s \in \IR,$
the space \begin{align*}
 \B^s_{p , q , D} (\Omega ; \IC^d) := \begin{cases} \{ f \in \B^s_{p , q} (\Omega ; \IC^d) : f|_{\partial \Omega} = 0 \}, &\text{if } s > 1 / p \\
 \B^s_{p , q} (\Omega ; \IC^d), &\text{if } s < 1 / p. \end{cases}
\end{align*}
Here, elements in the Besov space $\B^s_{p , q} (\Omega ; \IC^d)$ are defined to be restrictions to $\Omega$ of elements in $\B^s_{p , q} (\IR^d ; \IC^d)$ and the norm of $\B^s_{p , q} (\Omega ; \IC^d)$ is given by the corresponding quotient norm. Furthermore, if $\Omega$ is smooth enough, e.g., Lipschitz regular, then the following interpolation identity holds (see more details in~\cite[Thm.~2.13]{Triebel_Lipschitz}):
\begin{align*}
 \big( \B^{s_0}_{p , q_0} (\Omega ; \IC^d) , \B^{s_1}_{p , q_1} (\Omega ; \IC^d) \big)_{\theta , q} = \B^s_{p , q} (\Omega ; \IC^d),
\end{align*}
where
\begin{align*}
 \theta \in (0 , 1), \quad s_0 \neq s_1 \in \IR, \quad s = (1 - \theta) s_0 + \theta s_1, \quad p \in (1 , \infty), \quad \text{and} \quad q_0 , q_1 , q \in [1 , \infty].
\end{align*}

\begin{proposition}
\label{Prop: Identification ground spaces}
Let $1 < p < \infty$ and $1 \leq q \leq \infty$. Then, for $- 1 / p^{\prime} < 2 s < 2$ with $2 s \neq 1 / p$ it holds up to the identification by the isomorphism $\Phi$ that
\begin{align*}
 X^s_{p , q} = \B^{2 s}_{p , q , D} (\Omega ; \IC^d).
\end{align*}
Furthermore, for $0 < s < 1$ with $2 s \neq 1 / p$ it holds
\begin{align*}
 Y^s_{p , q} = \B^{2 s}_{p , q , D} (\Omega ; \IC^d).
\end{align*}
In the case $2 s = 1 / p$, it holds  that
\begin{align*}
 X^s_{p , q} \hookrightarrow \B^{2 s}_{p , q} (\Omega ; \IC^d) \quad \text{and} \quad Y^s_{p , q} \hookrightarrow \B^{2 s}_{p , q} (\Omega ; \IC^d).
\end{align*}
\end{proposition}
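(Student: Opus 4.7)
The plan is to combine three ingredients: the concrete description of the spaces $X_p^k$ via Proposition~\ref{Prop: Regularity for reduced operator on Lp} and the isomorphism $\Phi$, a reiteration argument in the real interpolation scale of the sectorial operators $L_p$ and $\cL_p$, and the classical identification, due to Grisvard, of $(\L^p, \dom(L_p))_{\theta, q}$ with a Besov space obeying the Dirichlet boundary condition.

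First I would unpack the definitions: by \eqref{eq:domaineLp}, one has $X_p^0 \cong \L^p(\Omega;\IC^d)$, $X_p^1 \cong \dom(L_p) = \W^{2,p}(\Omega;\IC^d) \cap \W^{1,p}_0(\Omega;\IC^d)$, and $X_p^2 \cong \dom(L_p^2)$. By the discussion surrounding Lemma~\ref{Lem: Extrapolation lemma}, $\{X_p^k\}_{k \geq -1}$ forms the Amann extrapolation--interpolation scale of $\cL_p$. The standard reiteration theorem $(E, \dom(A^n))_{\theta/n, q} = (E, \dom(A))_{\theta, q}$ for $\theta \in (0,1)$, applied to $L_p$ on $\L^p$, gives $Y^s_{p,q} = (\L^p, \dom(L_p))_{s,q}$ for $s \in (0,1)$; applied to $\cL_p$ on $X_p^{-1}$, it yields
\begin{align*}
 X^s_{p,q} = \begin{cases}
 (\L^p, \dom(L_p))_{s,q}, & s \in (0,1), \\
 (X_p^{-1}, X_p^0)_{s+1, q}, & s \in (-1, 0).
 \end{cases}
\end{align*}
Grisvard's theorem then identifies the positive-$s$ side with $\B^{2s}_{p,q,D}$ as soon as $2s \neq 1/p$.

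For $s \in (-1/(2p'), 0)$, I would proceed by duality. Setting $\tau = s+1 \in (1 - 1/(2p'), 1)$, the real interpolation duality theorem (applicable thanks to the reflexivity of $\L^p$-spaces for $1 < p < \infty$) combined with the identifications $(X_p^{-1})' \cong \dom((L_2^*)_{p'})$ and $(X_p^0)' \cong \L^{p'}(\Omega;\IC^d)$ provided by Lemma~\ref{Lem: Extrapolation lemma} yields
\begin{align*}
 \bigl( (X_p^{-1}, X_p^0)_{\tau, q} \bigr)' = (\L^{p'}, \dom((L_2^*)_{p'}))_{1-\tau, q'}.
\end{align*}
Since $(L_2^*)_{p'}$ is of the same type as $L_{p'}$ (only $z$ is replaced by $\bar z$), Grisvard's result gives $\B^{2(1-\tau)}_{p',q',D}$, and the range $2(1-\tau) \in (0, 1/p')$ removes the boundary condition, leaving $\B^{2(1-\tau)}_{p',q'}$. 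Taking duals once more and invoking the classical duality $\B^\sigma_{p',q'}(\Omega)' = \B^{-\sigma}_{p,q}(\Omega)$ for $0 < \sigma < 1/p'$ on smooth bounded domains, one obtains $X^s_{p,q} = \B^{2s}_{p,q}(\Omega;\IC^d) = \B^{2s}_{p,q,D}$, since $2s < 0 < 1/p$ precludes any trace. The endpoint $2s = 1/p$ cannot yield a true equality because no trace is defined on $\B^{1/p}_{p,q}$, but monotonicity of real interpolation applied at neighboring indices $s \pm \eps$ gives the claimed continuous embeddings.

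The main obstacle I anticipate is the duality step for negative $s$. While reiteration and Grisvard's theorem are standard, careful bookkeeping is needed to verify that $(X_p^{-1})' \cong \dom((L_2^*)_{p'})$ through the canonical isomorphisms of the extrapolation lemma and the reflexivity of the ambient spaces. Moreover, Besov duality on bounded smooth domains is sensitive to the precise definition of the spaces (here, as restrictions) and to the trace regime, and must be invoked only in the subcritical range $\sigma < 1/p'$ so that no trace anomaly arises.
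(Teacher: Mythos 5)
Your reiteration route for identifying the scales is a genuine alternative to the paper's proof: the paper computes $(X_p^0,X_p^1)_{\theta,q}$ and $(X_p^{-1},X_p^0)_{\theta,q}$ separately and then patches the two halves together via Wolff's interpolation theorem (\cite{Wolff}, \cite{Janson_Nilsson_Peetre}), whereas you keep everything inside the sectorial-operator interpolation scale of $\cL_p$. That works in principle, and is arguably more self-contained. However, as stated, the reiteration formula $(E,\dom(A^n))_{\theta/n,q}=(E,\dom(A))_{\theta,q}$ only delivers the reduction $X^s_{p,q}=(X_p^{-1},X_p^0)_{s+1,q}$ in the regime $s\in(-1,0)$; for $s\in(0,1)$ you need instead $(X_p^{-1},X_p^1)_{(s+1)/2,q}=(X_p^0,X_p^1)_{s,q}$, which is a reiteration across the middle point $X_p^0=\dom(\cL_p)$ and requires knowing that $\dom(\cL_p)$ is of class $J(1/2)\cap K(1/2)$ in the pair $(X_p^{-1},X_p^1)$ (or equivalently invoking the full \cite[Prop.~6.6.7]{Haase} for the scale of an invertible sectorial operator, not just the one-sided formula you quoted). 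You should either cite that stronger result or verify the moment and $K$-functional estimates explicitly.

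The more consequential gap is in the duality step for $s<0$. You justify the real interpolation duality by ``reflexivity of $\L^p$-spaces'', but that is not sufficient: to pass from $\bigl((X_p^{-1},X_p^0)_{\tau,q}\bigr)'=(\L^{p'},\dom((L_2^*)_{p'}))_{1-\tau,q'}$ back to an identification of $(X_p^{-1},X_p^0)_{\tau,q}$ you must take another dual, which requires the interpolation space itself to be reflexive. That forces $1<q<\infty$, and fails precisely at $q=1$ (the case the rest of the paper is built on) and $q=\infty$. The paper acknowledges this restriction (``we concentrate first on \dots\ the case $1<q<\infty$'') and then recovers $q=1,\infty$ by a final reiteration within the Besov family $\{\B^\sigma_{p,q}\}_{-1/p'<\sigma<1/p}$, using the already established identification for intermediate $q$. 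Your proof needs that final reiteration step; without it the endpoint exponents are simply not covered.
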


\begin{proof}
First, we consider the spaces $Y^s_{p , q}$. Notice that by~\cite[Prop.~6.6.7]{Haase} and the sectoriality of $L_p$ on $\L^p (\Omega ; \IC^d)$ it holds for $0 < s < 1$
\begin{align*}
 Y^s_{p , q} = \big( X_p^0 , X_p^2 \big)_{s / 2 , q} = \big( X_p^0 , X_p^1 \big)_{s , q}.
\end{align*}
Since, by definition of the spaces, $\Phi$ is an isomorphism
\begin{align*}
 \Phi : \L^p (\Omega ; \IC^d) \to X_p^0 \qquad \text{and} \qquad \Phi : \dom(L_p) \to X_p^1,
\end{align*}
it holds by virtue of~\cite[Thm.~5.2]{Amann}
whenever $2 s \neq 1 / p$ with equivalent norms that
\begin{align*}
 Y^s_{p,q}=\big( X_p^0 , X_p^1 \big)_{s , q} = \Phi \big(\L^p (\Omega ; \IC^d) , \dom(L_p) \big)_{s , q} = \Phi \B^{2 s}_{p , q , D} (\Omega ; \IC^d).
\end{align*}
If $2 s = 1 / p$, then $\dom(L_p) \subset \W^{2 , p} (\Omega ; \IC^d)$ implies that
\begin{align*}
 Y^{\frac{1}{2 p}}_{p,q} = \big( X_p^0 , X_p^1 \big)_{\frac{1}{2 p} , q} \subset \Phi \big(\L^p (\Omega ; \IC^d) , \W^{2 , p} (\Omega ; \IC^d) \big)_{\frac{1}{2 p}, q} = \Phi \B^{\frac{1}{p}}_{p , q} (\Omega ; \IC^d).
\end{align*}
\indent We turn to study the spaces $X^s_{p , q}$. As we already calculated $( X_p^0 , X_p^1)_{\theta , q}$ for $\theta \in (0 , 1)$, we concentrate first on $( X_p^{-1} , X_p^0)_{\theta , q}$ and the case $1 < q < \infty$. By the definitions of the spaces and the duality theorem~\cite[Sec.~1.11.2]{Triebel}, we find
\begin{align*}
 \big( X_p^{-1} , X_p^0 \big)_{\theta , q} = \big( \L^{p^{\prime}} (\Omega ; \IC^d) , \dom((L_2^*)_{p^{\prime}}) \big)_{1 - \theta , q^{\prime}}^{\prime} = \B^{2 (1 - \theta)}_{p^{\prime} , q^{\prime} , D} (\Omega ; \IC^d)^{\prime} = \B^{- 2 (1 - \theta)}_{p , q , D} (\Omega ; \IC^d).
\end{align*}
Notice that the following interpolation identities hold true, see~\cite[Thm.~5.2]{Amann},
\begin{align*}
 \big( X^0_p , X^1_p \big)_{\theta , q} = \Phi \B^{2 \theta}_{p , q} (\Omega ; \IC^d) \qquad (2 \theta < 1 / p)
\end{align*}
and
\begin{align*}
 \big( \L^{p^{\prime}} (\Omega ; \IC^d) , \dom((L_2^*)_{p^{\prime}}) \big)_{1 - \theta , q^{\prime}} = \B^{2 (1 - \theta)}_{p^{\prime} , q^{\prime}} (\Omega ; \IC^d) \qquad (2 (1 - \theta) < 1 / p^{\prime}).
\end{align*}
In particular,~\cite[Sec.~4.8.2]{Triebel} implies that
\begin{align*}
 \B^{2 (1 - \theta)}_{p^{\prime} , q^{\prime}} (\Omega ; \IC^d)^{\prime} = \B^{- 2 (1 - \theta)}_{p , q} (\Omega ; \IC^d) \qquad (2 (1 - \theta) < 1 / p^{\prime}).
\end{align*}
Since $\{ \B^s_{p , q} (\Omega ; \IC^d) \}_{- 1 / p^{\prime} < s < 1 / p}$ forms an interpolation family with respect to the real interpolation method~\cite[Sec.~4.3.1]{Triebel}, we find by~\cite{Wolff} (see also~\cite{Janson_Nilsson_Peetre}) and~\cite[Thm.~5.2]{Amann} modulo an identification with the canonical isomorphism $\Phi$ that
\begin{align*}
 X^s_{p , q} = \B^{2 s}_{p , q , D} (\Omega ; \IC^d) \qquad (- 1 / p^{\prime} < 2 s < 2 \text{ with } 2 s \neq 1 / p).
\end{align*}
The condition $q = 1$ or $q = \infty$ can now be added by the reiteration theorem.
\end{proof}

Having the scale $X^s_{p , q}$ of intermediate spaces at hand, we realize the Lam\'e operator 
${\bf L}_{p , q , s}$ on $X^s_{p , q}$ as the part of $\cL_p$ on this space, namely
\begin{align*}
 \dom({\bf L}_{p , q , s}) := \{ \fu \in \dom(\cL_p) \cap X^s_{p , q} : \cL_p \fu \in X^s_{p , q} \}.
\end{align*}
In Lemma~\ref{Lem: Domain of part}, it is shown that, for all $1 < p < \infty$, $1 \leq q \leq \infty$, and $- 1 < s < 1$ it holds with equivalent norms
\begin{equation}\label{eq:Ys1}
 \dom({\bf L}_{p , q , s}) = Y^{s + 1}_{p , q}.
\end{equation}

In general, if an operator generates a bounded analytic semigroup, its part onto a subspace need not generate a semigroup. However, as we already know that the domain of ${\bf L}_{p , q , s}$ 
is $Y^{s+1}_{p,q},$  this delivers right mapping properties of the resolvent of ${\bf L}_{p , q , s}$.

\begin{proposition}
\label{Prop: Generation proposition}
For all $1 < p < \infty$, $1 \leq q \leq \infty$, and $-1 < s < 1$ the operator $- {\bf L}_{p , q , s}$ with coefficients $\mu$ and $z = \mu^{\prime}$ generates a bounded analytic semigroup on $X^s_{p , q}$ with $0 \in \rho(L_p)$.
\end{proposition}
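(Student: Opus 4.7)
The plan is to view $X^s_{p,q}$ as a real interpolation space inside Amann's extrapolation scale generated by $\cL_p$, and invoke the following stability principle: the part of the generator of a bounded analytic semigroup on any real interpolation space sandwiched between the base space and the domain of a positive power of the operator is again the generator of a bounded analytic semigroup, with the same opening angle. The previous proposition, combined with Proposition~\ref{Prop: Regularity for reduced operator on Lp} and the similarity $\cL_p = T L_p T^{-1}$ with $T \in \Isom(\L^p(\Omega;\IC^d), X_p^{-1})$ from Lemma~\ref{Lem: Extrapolation lemma}, supplies all the ingredients needed to run this principle at integer levels $k \in \{-1,0,1\}$ of the scale and then interpolate.

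Concretely, I first record that $-\cL_p$ generates a bounded analytic semigroup on $X_p^{-1}$ and that $0\in\rho(\cL_p)$, the invertibility being transported from $0\in\rho(L_p)$ by similarity. Hence, by Remark~\ref{Rem: Exponential decay}, there exist $\omega' \in [0,\pi/2)$ and $C > 0$ such that
$$\|\lambda (\lambda+\cL_p)^{-1}\|_{\Lop(X_p^{-1})} \leq C \qquad (\lambda \in \IC\setminus\overline{\Sec_{\omega'}}).$$
Since $X_p^k = \Phi\,\dom(L_p^k)$ and the resolvent of $L_p$ commutes with $L_p$, the same sectorial estimate transfers to $X_p^0$ and $X_p^1$ via the similarity, and one also has the shift $(\lambda+\cL_p)^{-1}(X_p^k) \subset X_p^{k+1}$ for $k\in\{-1,0\}$. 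Real interpolation of the uniform bounds on $X_p^{-1}$ and $X_p^{1}$, using $X^s_{p,q} = (X_p^{-1}, X_p^1)_{(s+1)/2,q}$, then yields
$$\|\lambda (\lambda+\cL_p)^{-1}\|_{\Lop(X^s_{p,q})} \leq C' \qquad (\lambda \in \IC\setminus\overline{\Sec_{\omega'}}),$$
while interpolating the shift gives $(\lambda+\cL_p)^{-1}(X^s_{p,q}) \subset (X_p^0, X_p^2)_{(s+1)/2,q} = Y^{s+1}_{p,q}$. Since $Y^{s+1}_{p,q} = \dom({\bf L}_{p,q,s})$ by \eqref{eq:Ys1}, this restriction is exactly $(\lambda+{\bf L}_{p,q,s})^{-1}$, establishing that ${\bf L}_{p,q,s}$ is sectorial on $X^s_{p,q}$ of angle at most $\omega' < \pi/2$ with $0\in\rho({\bf L}_{p,q,s})$.

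To upgrade sectoriality into the bounded analytic semigroup claim, I would invoke the Engel--Nagel characterization recalled in Section~\ref{Some background from semigroup theory}. For $q<\infty$, density of $\dom({\bf L}_{p,q,s})=Y^{s+1}_{p,q}$ in $X^s_{p,q}$ follows from standard density results for real interpolation spaces (the space $X_p^2$ sits densely in $X^s_{p,q}$); for $q=\infty$ the domain need not be dense, but one obtains the semigroup in the weaker sense allowed by Definition~\ref{Def: Analytic semigroup}. The main technical subtlety lies in carrying out the interpolation of the resolvent estimates \emph{uniformly in $\lambda$} across the entire sector complement, and in tracking the shift property $(\lambda+\cL_p)^{-1}:X_p^k\to X_p^{k+1}$ through the interpolation so that the image lands in $Y^{s+1}_{p,q}$ and not merely in $X^{s+1}_{p,q}$; without this refinement one would only recover sectoriality of the extrapolation $\cL_p$ restricted to $X^s_{p,q}$, not the part ${\bf L}_{p,q,s}$ with its correct domain.
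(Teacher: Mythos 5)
Your proof is correct and relies on the same core ingredients as the paper's (the similarity $\cL_p=TL_pT^{-1}$, the identification $\dom(\mathbf L_{p,q,s})=Y^{s+1}_{p,q}$ from Lemma~\ref{Lem: Domain of part}, and real interpolation of sectorial resolvent estimates), but it organizes the interpolation differently and, in my view, more cleanly. The paper establishes the uniform resolvent bound only on $X^{-1}_p$ (by similarity from $X^0_p$) and interpolates against $X^0_p$, which covers only $-1<s<0$; the range $0<s<1$ is then handled separately via the characterization~\eqref{Eq: Characterization real interpolation spaces} of $(X,\dom(A))_{\theta,q}$ through the semigroup, and $s=0$ is obtained by a further interpolation. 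You instead observe that the uniform sectorial bound also holds on $X^1_p$ (since $L_p$ commutes with its resolvent, $\|\lambda(\lambda+\cL_p)^{-1}\Phi f\|_{X^1_p}=\|\lambda(\lambda+L_p)^{-1}L_pf\|_{\L^p}\leq C\|L_pf\|_{\L^p}$), interpolate the couple $(X^{-1}_p,X^1_p)$ in one shot to get the bound on $X^s_{p,q}$ for the whole range $-1<s<1$, and simultaneously interpolate the mapping $(\lambda+\cL_p)^{-1}:X^k_p\to X^{k+1}_p$ to conclude that the image lands in $Y^{s+1}_{p,q}=\dom(\mathbf L_{p,q,s})$; this last point replaces the paper's argument by the embedding $X^0_p\hookrightarrow X^s_{p,q}$, which only works for $s\leq 0$. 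What this buys is a uniform treatment without the case split, while what it costs is having to verify the extra endpoint estimate on $X^1_p$ and to keep track of the shift through the interpolation — which you flag correctly. Your remark about density for $q<\infty$ versus $q=\infty$ is also a reasonable refinement that the paper passes over.
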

\begin{proof} According to Lemma~\ref{Lem: Extrapolation lemma},  $T := \widetilde{\cL}_p \Phi$ is an isomorphism between $\L^p (\Omega ; \IC^d)$ and $X^{-1}_p,$ and   $\cL_p = T L_p T^{-1}.$  Hence  $\rho(\cL_p) = \rho(L_p)$. 
 Furthermore, because $- L_p$ generates a bounded analytic semigroup, cf.~Proposition~\ref{Prop: Analyticity on Lp}, there exists some  $\theta \in (\pi/2 , \pi)$ and $C>0$ such that 
 $$ \Sec_{\theta} \subset \rho(- \L_p)\andf
  \| \lambda \Phi (\lambda + L_p)^{-1} \Phi^{-1} \|_{\Lop(X^0_p)} \leq C\quad\hbox{for all }\ 
  \lambda\in \Sec_\theta.$$
Notice that Lemma~\ref{Lem: Extrapolation lemma}~\eqref{Item: Recovering original operator} implies that $(\lambda + \cL_p)^{-1}|_{X^0_p} = \Phi (\lambda + L_p)^{-1} \Phi^{-1}$. 
  Thus, since $T : X_p^0 \to X^{-1}_p$ is an isomorphism, it holds
$$\begin{aligned}
 \| \lambda (\lambda + \cL_p)^{-1} \|_{\Lop(X^{-1}_p)}
 &= \| \lambda T\Phi^{-1}(\Phi(\lambda + L_p)^{-1}\Phi^{-1})\Phi T^{-1}\|_{\Lop(X^{-1}_p)}\\
  &\leq C \| \lambda \Phi (\lambda + L_p)^{-1} \Phi^{-1 }\|_{\Lop(X^0_p)} \leq C.
\end{aligned}$$
Then, by real interpolation we derive that for all $1 < p < \infty$, $1 \leq q \leq \infty$, and $-1 < s < 0$ there exists $C > 0$ such that for all $\lambda \in \Sec_{\theta}$ it holds
\begin{align}
\label{Eq: Interpolated resolvent estimate}
 \| \lambda (\lambda + \cL_p)^{-1}|_{X^s_{p , q}} \|_{\Lop(X^s_{p , q})} \leq C.
\end{align}
Finally, we prove that $\rho(\cL_p) \subset \rho({\bf L}_{p , q , s})$ and that $(\lambda + \cL_p)^{-1}|_{X^s_{p , q}} = (\lambda + {\bf L}_{p , q , s})^{-1}$ holds for $\lambda \in \rho(- \cL_p)$. \par
Let $\lambda \in \rho(- \cL_p)$. Clearly $\lambda + {\bf L}_{p , q , s}$ inherits the injectivity of $\lambda + \cL_p$. For the surjectivity, let $\ff \in X^s_{p , q}$. Since $\lambda \in \rho(- \cL_p)$, there exists $\fu \in \dom(\cL_p) = X^0_p$ such that $(\lambda + \cL_p) \fu = \ff$. Since $X^0_p \hookrightarrow X^s_{p , q}$, the definition of the part of an operator now implies that $\fu \in \dom({\bf L}_{p , q , s})$ and that $(\lambda + {\bf L}_{p , q , s}) \fu = \ff$. Consequently, this together with~\eqref{Eq: Interpolated resolvent estimate} implies that $- {\bf L}_{p , q , s}$ generates a bounded analytic semigroup on $X^s_{p , q}$. \par
In the case $0 < s < 1$ this follows immediately by the characterization in~\eqref{Eq: Characterization real interpolation spaces} and the fact that $- L_p$ generates a bounded analytic semigroup on $\L^p (\Omega ; \IC^d)$, see Proposition~\ref{Prop: Analyticity on Lp}. \par
The final case $s = 0$ follows by interpolation.
\end{proof}

Putting together all the previous results, it is now
possible to state maximal $\L^q$-regularity for the Lam\'e 
operator in Besov spaces, \emph{including the case $q=1$}.  
\begin{theorem} 
\label{Thm: Maximal regularity of Lame on Besov}
Let $\mu , \mu^{\prime} \in \IR$ with $\mu > 0$ and $\mu + \mu^{\prime} > 0$, $1 < p < \infty$, $1 \leq q \leq \infty$, $- 1 < s < 1$, and ${\bf L}_{p , q , s}$ be the Lam\'e operator with coefficients $\mu$ and $z = \mu^{\prime}$ on $X^s_{p , q}$. Then, ${\bf L}_{p , q , s}$ has maximal $\L^q$-regularity on the time interval $\IR_+$. In particular, if ${\bf L}_{p , q , s}^{\uparrow}$ denotes the
 lifted operator  to $\L^q (\IR_+ ; X^s_{p , q})$ (as in Section~\ref{Some background from semigroup theory}), then there exists a constant $C > 0$ such that the sum operator $\frac{\d}{\d t} + {\bf L}_{p , q , s}^{\uparrow}$ satisfies for all $K > 0$ and for all $f \in \L^q (\IR_+ ; X^s_{p , q})$
\begin{align}
\label{Eq: Sectoriality of the sum operator}
 \| \nabla^2 (\tfrac{\d}{\d t} + K + {\bf L}_{p , q , s}^{\uparrow})^{-1} f \|_{\L^q (\IR_+ ; \B^{2 s}_{p , q} (\Omega ; \IC^{d^3}))} \leq C \| f \|_{\L^q (\IR_+ ; X^s_{p , q})}.
\end{align}
\end{theorem}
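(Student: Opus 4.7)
The maximal $\L^q$-regularity will be established by applying the Da Prato--Grisvard theorem to an operator whose real interpolation scale recovers $X^s_{p , q}$. By Proposition~\ref{Prop: Generation proposition}, $-{\bf L}_{p , q , s}$ already generates a bounded analytic semigroup on $X^s_{p , q}$ with $0 \in \rho({\bf L}_{p , q , s})$; what remains is to produce the $\L^q$-in-time estimate, uniformly in the shift $K > 0$, and then read it in $\nabla^2$-Besov form.

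First I would realize $X^s_{p , q}$ as a DPG-compatible interpolation space. For $-1 < s < 0$, take $X := X^{-1}_p$ and $A := \cL_p$, so that $\dom(A) = X^0_p$. Since $\cL_p = T L_p T^{-1}$ with $T := \widetilde{\cL}_p \Phi$, a direct computation yields $\dom(\cL_p^2) = T \dom(L_p^2) = X^1_p$; reiteration for invertible sectorial operators then gives
\begin{equation*}
 X^s_{p , q} = (X^{-1}_p , X^1_p)_{(s + 1)/2 , q} = (X^{-1}_p , X^0_p)_{s + 1 , q} = (X , \dom(A))_{s + 1 , q},
\end{equation*}
with $s + 1 \in (0 , 1)$. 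For $0 < s < 1$, I take instead $X := X^0_p$ and $A := \Phi L_p \Phi^{-1}$ with $\dom(A) = X^1_p$, so that $X^s_{p , q} = Y^s_{p , q} = (X^0_p , X^1_p)_{s , q}$ by Proposition~\ref{Prop: Identification ground spaces}. The remaining endpoint $s = 0$ is then obtained by real interpolation between the two neighboring regimes.

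Next I would apply Theorem~\ref{Thm: Da Prato-Grisvard}. As $0 \in \rho(A)$, Remark~\ref{Rem: Exponential decay} furnishes $\eps > 0$ such that $A^{\uparrow} - \eps$ remains sectorial of angle less than $\pi / 2$; applying DPG to $C := (A^{\uparrow} - \eps) + \frac{\d}{\d t}$ (with $\frac{\d}{\d t}$ taken with zero initial trace) yields sectoriality of $C$ of angle $\pi / 2$ on $\L^q (\IR_+ ; X^s_{p , q})$, hence uniform invertibility of $\frac{\d}{\d t} + K + A^{\uparrow}$ for every $K > 0$ together with the estimate
\begin{equation*}
 \| A^{\uparrow} (\tfrac{\d}{\d t} + K + A^{\uparrow})^{-1} f \|_{\L^q (\IR_+ ; X^s_{p , q})} \leq C \| f \|_{\L^q (\IR_+ ; X^s_{p , q})}.
\end{equation*}
By the identity $\dom({\bf L}_{p , q , s}) = Y^{s + 1}_{p , q}$ from~\eqref{eq:Ys1}, the lifted operators $A^{\uparrow}$ and ${\bf L}_{p , q , s}^{\uparrow}$ coincide on the maximal regularity class, which transfers the bound to $(\tfrac{\d}{\d t} + K + {\bf L}_{p , q , s}^{\uparrow})^{-1}$.

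Finally, the $\nabla^2$ estimate~\eqref{Eq: Sectoriality of the sum operator} is obtained by combining the Besov embedding $Y^{s + 1}_{p , q} \hookrightarrow \B^{2 s + 2}_{p , q , D} (\Omega ; \IC^d)$ from Proposition~\ref{Prop: Identification ground spaces} with the coercivity-type bound $\| u \|_{\B^{2 s + 2}_{p , q}} \leq C \| {\bf L}_{p , q , s} u \|_{X^s_{p , q}}$ for $u \in Y^{s + 1}_{p , q}$, which is the Besov analogue of the elliptic regularity statement of Proposition~\ref{Prop: Regularity for reduced operator on Lp}. The main obstacle I expect is the verification that the two interpolation descriptions of $X^s_{p , q}$ match at and across $s = 0$ and at the corner values $2 s = 1 / p$ and $2 (s + 1) = 1 / p$, where Proposition~\ref{Prop: Identification ground spaces} only provides embeddings rather than identifications; to keep $C$ genuinely uniform in $K$, one has to work purely inside the abstract Da Prato--Grisvard framework until the final step, where the Besov identification is invoked.
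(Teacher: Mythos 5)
Your route is close to the paper's, but not identical, and the difference is worth noting. Both arguments pivot on the Da Prato--Grisvard theorem and the Besov identification of the abstract interpolation scale, and both finish the $\nabla^2$-estimate by combining $\dom({\bf L}_{p,q,s}) = Y^{s+1}_{p,q}$ with Proposition~\ref{Prop: Identification ground spaces}. The discrepancy is in the choice of base space for DPG. You apply it only at the two ``Lebesgue endpoints'' $X := X^{-1}_p$ and $X := X^0_p$, with $A = \cL_p$ and $A = \Phi L_p \Phi^{-1}$, which yields $X^s_{p,q}$ as $(X,\dom(A))_{\theta,q}$ for $\theta = s+1$ when $-1<s<0$ and for $\theta = s$ when $0<s<1$; this excludes $s=0$ (where $\theta$ would hit an endpoint of $(0,1)$), so you must patch the case $s=0$ by a vector-valued Lions--Peetre interpolation of the resolvent bounds on two neighboring regimes, uniformly in $K$. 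That can be made rigorous, but it is an extra step. The paper instead exploits that Proposition~\ref{Prop: Generation proposition} already gives generation and $0 \in \rho({\bf L}_{p,q,s_0})$ on \emph{every} Besov level $X^{s_0}_{p,q}$, $s_0 \in (-1,1)$; one then applies DPG there, and the \emph{second} part of Lemma~\ref{Lem: Domain of part} (which your proof does not invoke) identifies the part of ${\bf L}_{p,q,s_0}$ on $X^{s}_{p,q}$ with ${\bf L}_{p,q,s}$ for $s\in(s_0,\min\{s_0+1,1\})$. Since $s_0$ is free to range over $(-1,1)$, every $s \in (-1,1)$, including $s=0$, is covered in one sweep without a separate interpolation patch. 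What the paper's choice of pivot buys is precisely the uniform treatment of all $s$; what your endpoint version buys is that the reiteration $(X^{-1}_p,X^1_p)_{(s+1)/2,q} = (X^{-1}_p,X^0_p)_{s+1,q}$ is a purely abstract Haase-type ladder identity, so the Besov ``corner'' worries you flag at $2s=1/p$ do not actually arise until the last step of the proof — which, as you correctly observe, is also how the paper keeps the constant independent of $K$.
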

\begin{proof} 
Fix $1 < p < \infty$ and $1 \leq q \leq \infty$. By virtue of Proposition~\ref{Prop: Generation proposition} we know for all $-1 < s_0 < 1$ that
\begin{align*}
 - {\bf L}_{p , q , s_0} \quad \text{generates a bounded analytic semigroup on} \quad X^{s_0}_{p , q} \quad \text{with} \quad 0 \in \rho({\bf L}_{p , q , s_0}).
\end{align*} 
Now, for $s \in (s_0 , \min\{ s_0 + 1 , 1 \})$ the discussion below Theorem~\ref{Thm: Da Prato-Grisvard} that leads to~\eqref{Eq: Da Prato-Grisvard estimate}, reveals that the part of ${\bf L}_{p , q , s_0}$ in $X^s_{p , q}$ has maximal $\L^q$-regularity on the time interval $\IR_+$. Since the part of ${\bf L}_{p , q , s_0}$ in $X^s_{p , q}$ is the operator ${\bf L}_{p , q , s}$ by Lemma~\ref{Lem: Domain of part}, this readily proves the first part of the theorem. \par
The estimate~\eqref{Eq: Sectoriality of the sum operator} follows by the boundedness of $\nabla^2 {\bf L}_{p , q , s}^{-1}$ from $X^s_{p , q}$ into $\B^{2 s}_{p , q} (\Omega ; \IC^3)$ which is established by combining Lemma~\ref{Lem: Domain of part} with Proposition~\ref{Prop: Identification ground spaces}. The estimate is then concluded by an application Theorem~\ref{Thm: Da Prato-Grisvard}.
\end{proof}

\begin{corollary}\label{c:lame} Let $0 < T \leq \infty.$ Let $1<p<\infty$ and $-1+1/p<s<1/p.$ For
any $u_0$ in $\B^s_{p,1}(\Omega;\IR^d)$ and 
$f\in \L^1(0,T;\B^s_{p,1}(\Omega;\IR^d)),$ system
\begin{align}\tag{(L)}\left\{\begin{aligned} \partial_tu-\mu\Delta u-\mu'\nabla\divergence u&=f && \hbox{in }\ (0,T)\times\Omega,\\
u|_{\partial\Omega}&=0 && \hbox{on }\ (0,T)\times\partial\Omega,\\
u|_{t=0}&=u_0 && \hbox{in }\ (0,T)\times\Omega,\end{aligned}\right.
\end{align}
admits a unique solution $u\in\cC_b([0,T];\B^s_{p,1}(\Omega;\IR^d))$
with 
\begin{align*}
 u \in \W^{1 , 1} (0 , T ; \B^s_{p , 1} (\Omega ; \IR^d)) \cap \L^1 (0 , T ; \B^{s + 2}_{p , 1} (\Omega ; \IR^d))
\end{align*}
and there exists a constant 
$C > 0$ depending only on $p,$ $s,$ $\mu'/\mu$, and $\Omega$ such that 
\begin{equation}\label{eq:inegu} \sup_{t\in[0,T]} \|u(t)\|_{\B^s_{p,1}}+\int_0^T \bigl(\|\partial_tu\|_{\B^s_{p,1}}
+\mu\|u\|_{\B^{s+2}_{p,1}}\bigr) \d t\leq 
C\biggl( \|u_0\|_{\B^s_{p,1}}+\int_0^T\|f\|_{\B^s_{p,1}}\,\d t\biggr)\cdotp\end{equation}
Furthermore, $C$ may be chosen uniformly  with respect to $\mu'/\mu$ whenever 
$\mu_*\leq \mu'/\mu\leq \mu^*$ for some constants  $\mu_*$ and $\mu^*$
such that $-1<\mu_*<\mu^*.$ 
\end{corollary}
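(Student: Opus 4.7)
The plan is to deduce this corollary from Theorem~\ref{Thm: Maximal regularity of Lame on Besov} taken with $q=1$ and from Corollary~\ref{Cor: Initial values}, by identifying the abstract ground space $X^\sigma_{p,1}$ and the abstract domain $Y^{\sigma+1}_{p,1}$ with concrete Besov spaces on $\Omega$. Setting $\sigma:=s/2$, the assumption $-1+1/p<s<1/p$ becomes $-1/p'<2\sigma<1/p$; in particular $2\sigma\neq 1/p$, so Proposition~\ref{Prop: Identification ground spaces} provides $X^{\sigma}_{p,1}\simeq\B^s_{p,1,D}(\Omega;\IC^d)$ through the canonical isomorphism $\Phi$, and since $s<1/p$ this coincides with $\B^s_{p,1}(\Omega;\IC^d)$ (no trace is seen). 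Restriction to real scalars is harmless as the Lamé operator commutes with complex conjugation.

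By Proposition~\ref{Prop: Generation proposition}, $-{\bf L}_{p,1,\sigma}$ generates a bounded analytic semigroup on $X^{\sigma}_{p,1}$ with $0$ in the resolvent set. To handle inhomogeneous initial data, I invoke Corollary~\ref{Cor: Initial values} with underlying Banach space chosen as a slightly smaller space $X^{\sigma-\eta}_{p,1}$ for some small $\eta>0$, operator $A={\bf L}_{p,1,\sigma-\eta}$, and the interpolation parameter $\theta$ tuned so that the real interpolation space equals $X^{\sigma}_{p,1}$ (via reiteration and Proposition~\ref{Prop: Identification ground spaces}). This produces, for every $u_0\in\B^s_{p,1}$ and $f\in\L^1(\IR_+;\B^s_{p,1})$, a unique solution
\begin{align*}
u\in\W^{1,1}(\IR_+;\B^s_{p,1})\cap\L^1(\IR_+;\dom({\bf L}_{p,1,\sigma})),
\end{align*}
together with the stated estimate. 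The restriction to the finite interval $(0,T)$ follows by extending $f$ by zero outside $(0,T)$, and the membership in $\cC_b([0,T];\B^s_{p,1})$ by the embedding $\W^{1,1}(0,T;X)\hookrightarrow\cC_b([0,T];X)$.

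The most delicate step is to identify $\dom({\bf L}_{p,1,\sigma})=Y^{\sigma+1}_{p,1}$ (see~\eqref{eq:Ys1}) with the concrete space controlled by the $\B^{s+2}_{p,1}$-norm, so as to convert the $\nabla^2$-bound of~\eqref{Eq: Sectoriality of the sum operator} in Theorem~\ref{Thm: Maximal regularity of Lame on Besov} into a full $\B^{s+2}_{p,1}$-bound. For $\sigma+1<1$, that is $s<0$, the identification is direct from Proposition~\ref{Prop: Identification ground spaces}. For $\sigma+1\geq 1$, that is $0\leq s<1/p$, one has to reiterate the real interpolation method between $X^0_p$ and $X^1_p$, each of which admits a concrete Sobolev / Besov description through Proposition~\ref{Prop: Regularity for reduced operator on Lp} (giving $\dom(L_p)=\W^{2,p}\cap\W^{1,p}_0$) and~\cite[Thm.~5.2]{Amann}, then conclude via the closed graph theorem.

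Finally, the uniformity of $C$ in $\mu'/\mu\in[\mu_*,\mu^*]\subset(-1,\infty)$ follows from the continuous dependence of all intermediate constants on $\mu'/\mu$: the coercivity constant of~\eqref{Eq: Sesquilinear form Lame}, the elliptic constants of Propositions~\ref{Prop: Higher regularity for reduced operator on L2} and~\ref{Prop: Regularity for reduced operator on Lp}, the sectoriality bounds of Proposition~\ref{Prop: Analyticity on Lp}, and the Da Prato--Grisvard constant of Theorem~\ref{Thm: Da Prato-Grisvard} remain bounded as long as $\mu+\mu'$ stays bounded away from zero. Compactness of $[\mu_*,\mu^*]$ then yields the uniform bound.
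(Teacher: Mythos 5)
Your derivation of the estimate with zero $\mu$-normalisation follows the same route as the paper: identify $\B^s_{p,1}(\Omega;\IC^d)$ with $X^{s/2}_{p,1}$, invoke Theorem~\ref{Thm: Maximal regularity of Lame on Besov} with $q=1$, pin down the domain via~\eqref{eq:Ys1} and Proposition~\ref{Prop: Identification ground spaces}, and add the initial value through Corollary~\ref{Cor: Initial values} applied on a slightly smaller ground space. One detail you omit is the time rescaling $u(t,x)=\wt u(\mu t,x)$, $f(t,x)=\mu\wt f(\mu t,x)$, which reduces to $\mu=1$. Without it, the theory of Section~\ref{s:lame} — where the Lam\'e operator has coefficients $\mu$ and $z=\mu'$ — produces a constant that a priori depends on $\mu$ and $\mu'$ separately, whereas the corollary asserts dependence only on the ratio $\mu'/\mu$. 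The rescaling is what makes the prefactor $\mu$ in front of $\|u\|_{\B^{s+2}_{p,1}}$ appear and lets $C$ depend only on $\mu'/\mu$.

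The genuine gap is in your argument for uniformity of $C$ over $\mu'/\mu\in[\mu_*,\mu^*]$. You assert that the coercivity constant, the elliptic constants of Propositions~\ref{Prop: Higher regularity for reduced operator on L2} and~\ref{Prop: Regularity for reduced operator on Lp}, the sectoriality bounds of Proposition~\ref{Prop: Analyticity on Lp}, and the Da Prato--Grisvard constant all depend continuously (or at least locally uniformly) on $\mu'$, and then invoke compactness of $[\mu_*,\mu^*]$. But continuity of these constants is never established and is far from evident: the proof of Proposition~\ref{Prop: Regularity for reduced operator on Lp} in the range $1<p<2$ rests on a compactness/contradiction argument that gives no explicit constant, and Proposition~\ref{Prop: Analyticity on Lp} is borrowed from~\cite{Mitrea_Monniaux} and~\cite{Danchin} without any tracking of how the bounds vary with the coefficients. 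The paper sidesteps this entirely by a perturbation-and-contradiction argument: assuming the uniform bound fails along a sequence $\mu'_n\to\bar\mu'$, write the equation for $u_n$ as the equation with \emph{fixed} coefficient $\bar\mu'$ plus the error $(\mu'_n-\bar\mu')\nabla\divergence u_n$, apply the estimate (already proved for the single coefficient $\bar\mu'$), and absorb the error term for $n$ large, reaching a contradiction with the assumed blowup. This avoids any need to track constants through the chain, and is the piece of reasoning your proposal is missing.
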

\begin{proof} 
Performing the time rescaling 
$$u(t,x)=\wt u(\mu t,x)\andf f(t,x)=\mu\wt f(\mu t,x)$$
reduces the proof to the case $\mu=1.$
So we assume $\mu=1$ in what follows.
\medbreak
Now, if  $u_0=0,$ then the result is a mere reformulation of Theorem~\ref{Thm: Maximal regularity of Lame on Besov} with $q = 1.$ 
Indeed, from it, we get the maximal $\L^1$-regularity for ${\bf L}_{p , 1 , s},$  then using 
\eqref{eq:Ys1} and Proposition~\ref{Prop: Identification ground spaces} gives 
the desired bound for $\|u\|_{\B^{s+2}_{p,1}}.$  
The initial value $u_0$ can be added by virtue of Corollary~\ref{Cor: Initial values}, and the bound on $\|u(t)\|_{\B^s_{p,1}}$ follows from  the bound on $\partial_t u$ and the fundamental theorem of calculus.

Let us finally prove   that if  $\mu=1$ (with no loss of generality) and 
 $-1<\mu_*\leq\mu' \leq \mu^*,$  then the constant $C$ in~\eqref{eq:inegu} may be chosen 
independently of $\mu'$. Argue by contradiction, assuming that there exists a sequence $(\mu'_n)_{n\in\IN}$ in $[\mu_*,\mu^*]$
and a sequence $(u_{0,n},f_n)_{n\in\IN}$  such that  
$$\|u_{0,n}\|_{\B^s_{p,1}} +\int_0^{\infty} \|f_n\|_{\B^s_{p,1}}\d t=1$$
and the solution $u_n$ of $(L)$ with coefficients 
$\mu=1$ and $\mu'=\mu'_n,$  and data $(u_{0,n},f_n)$ satisfies
\begin{equation}\label{eq:inegun}\int_0^{\infty} \bigl(\|\partial_tu_n\|_{\B^s_{p,1}}
+ \|u_n\|_{\B^{s+2}_{p,1}}\bigr) \d t\geq n.\end{equation}
Up to subsequence, we have  $\mu'_n\to \bar\mu'\in[\mu_*,\mu^*].$  We observe that 
$$\partial_t u_n-\Delta u_n -\bar\mu'\nabla\divergence u_n=f_n+(\mu'_n-\bar\mu')\nabla\divergence u_n.$$
Hence applying Inequality~\eqref{eq:inegu} with coefficients $1$ and $\bar\mu',$ we get some constant
$C$ such that 
$$\int_0^{\infty} \bigl(\|\partial_tu_n\|_{\B^s_{p,1}}
\!+\!\|u_n\|_{\B^{s+2}_{p,1}}\bigr) \d t\leq C\biggl(\|u_{0,n}\|_{\B^s_{p,1}} +\int_0^{\infty}
\bigl(\|f_n\|_{\B^s_{p,1}} + |\mu'_n-\bar\mu'| \|\nabla\divergence u_n\|_{\B^s_{p,1}}\bigr)\d t\biggr)\cdotp$$
Given the definition of the data, we deduce (changing $C$ if need be) that 
$$\int_0^{\infty} \bigl(\|\partial_tu_n\|_{\B^s_{p,1}}
+\|u_n\|_{\B^{s+2}_{p,1}}\bigr) \d t\leq C\biggl(1 +  |\mu'_n-\bar\mu'| \int_0^{\infty} \|u_n\|_{\B^{s+2}_{p,1}} \, \d t \biggr)\cdotp$$
For $n$ large enough, the resulting inequality stands in contradiction with~\eqref{eq:inegun}.
  \end{proof}


\section{The linearized compressible Navier--Stokes system} 

In this section, we are concerned with  the full linearized compressible
Navier--Stokes system, in the case where the pressure function $P$ satisfies $P'(1)>0.$
We strive for a maximal $\L^q$-regularity result \emph{up to $q=1$}
on the whole time interval $\IR_+.$  The difficulty compared to the previous section is that  
we have to take into consideration the coupling between the density
equation which is of hyperbolic type and 
the velocity equation which is of parabolic type. 

As a first, let us observe that the  following  change of time scale  and velocity:
\begin{equation}\label{eq:rescaling}
(\rho,u)(t,x)\leadsto (\wt\rho, c \wt u)(c t,x)\with
c:=\sqrt{P'(1)}\end{equation}
reduces the study to the case  $P'(1)=1,$
so that  the linearization of  
 the   compressible Navier--Stokes system about $(\rho,u)=(1,0)$ 
 coincides with~\eqref{eq:LCNS}. 
  \medbreak
   Throughout this section,  we assume that $1 < p < \infty$ and that $- 1 / p^{\prime} < s <1/p$.
If $2 \leq p < \infty$, then we let $1 \leq q < \infty$ and if $1 < p < 2$, then we assume additionally that\footnote{Hence
we must have $p>2(d-1)/(d+2)$ owing to $s<1/p.$}
\begin{align*}
s>\frac dp-\frac d2-1\qquad\hbox{or}\qquad s\geq \frac dp-\frac d2-1\andf 1 \leq q \leq 2.
\end{align*}
Notice that these assumptions guarantee that functions in the space $\B^s_{p , q} (\Omega ; \IC^d)$ admit a well-defined trace and, owing to the boundedness of $\Omega$, that
\begin{align}
\label{Eq: Retract to L2-based spaces}
 \B^s_{p , q} (\Omega ; \IC^d) \hookrightarrow \W^{-1 , 2} (\Omega ; \IC^d) \qquad \text{and} \qquad \B^{s + 1}_{p , q} (\Omega) \hookrightarrow \L^2 (\Omega).
\end{align}
To define the second-order operator involved in~\eqref{eq:LCNS} in the context of the spaces $ \B^s_{p , q} (\Omega ; \IC^d),$ 
we set
\begin{align*}
 \cX^s_{p , q} &:= [\B^{s + 1}_{p , q} (\Omega) \cap \L^p_0 (\Omega)] \times \B^s_{p , q} (\Omega ; \IC^d) \\
 \dom(A_{p , q , s}) &:= [\B^{s + 1}_{p , q} (\Omega) \cap \L^p_0 (\Omega)] \times \B^{s + 2}_{p , q , D} (\Omega ; \IC^d),
\end{align*}
where $\L^p_0 (\Omega)$ denotes the space of $\L^p$-functions which are average free. \medbreak
Recall that ${\bf L}_{p , q , s}$ denotes the Lam\'e operator on $\B^s_{p , q} (\Omega ; \IC^d).$ 
Then, we put \begin{align}\label{Eq: Operator}
 A_{p , q , s} : \dom(A_{p , q , s}) \subset \cX^s_{p , q} \to \cX^s_{p , q}, \quad \begin{pmatrix} a \\ u \end{pmatrix} \mapsto \begin{pmatrix} \divergence u \\ {\bf L}_{p , q , s} u + \nabla a \end{pmatrix}\cdotp
\end{align}
The rest of the section is devoted to proving the following result which implies Theorem~\ref{thm:linear}.
\begin{theorem}
\label{Thm: Maximal regularity compressible}
Let $p$, $q$, and $s$ be chosen as above. Then $- A_{p , q , s}$ generates an exponentially stable analytic semigroup on $\cX^s_{p , q},$ and $A_{p , q , s}$ has maximal $\L^q$-regularity on the time interval $\IR_+$.
\end{theorem}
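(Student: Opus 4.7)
The plan is to first show that $-A_{p,q,s}$ generates an exponentially stable analytic semigroup on $\cX^s_{p,q}$, and then to upgrade to maximal $\L^q$-regularity on $\IR_+$ via the Da Prato--Grisvard machinery of Section~\ref{Some background from semigroup theory}. The first step is a resolvent reduction: for $\lambda\neq 0$, the equation $(\lambda + A_{p,q,s})(a,u)^{\top} = (f,g)^{\top}$ decouples as $a = \lambda^{-1}(f - \divergence u)$, and substitution into the velocity equation yields the elliptic system
\begin{align*}
\lambda u - \mu \Delta u - (\mu' - \lambda^{-1}) \nabla \divergence u = g - \lambda^{-1} \nabla f,
\end{align*}
which is a Lam\'e system with \emph{complex} second viscosity $z_\lambda := \mu' - \lambda^{-1}$. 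For any fixed sector $\Sec_\theta$ with $\theta > \pi/2$ and $|\lambda|$ sufficiently large, one has $\mu + \Re(z_\lambda) \geq c > 0$, so the Lam\'e maximal regularity (Theorem~\ref{Thm: Maximal regularity of Lame on Besov}) together with its uniformity in the second viscosity (Corollary~\ref{c:lame}) yields a bound $\|\lambda (\lambda + A_{p,q,s})^{-1}\|_{\Lop(\cX^s_{p,q})} \leq C$ for all such $\lambda$.

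To control the spectrum inside any large ball, I would use compactness combined with an $\L^2$ energy estimate. The embedding $\dom(A_{p,q,s}) \hookrightarrow \cX^s_{p,q}$ is compact by Rellich--Kondrachov, so the spectrum of $A_{p,q,s}$ consists of discretely located eigenvalues of finite multiplicity. If $(a,u) \in \dom(A_{p,q,s})$ is an eigenfunction associated to $\lambda$ with $\Re\lambda \leq 0$, then using the embeddings~\eqref{Eq: Retract to L2-based spaces} and elliptic regularity for the Lam\'e operator (Proposition~\ref{Prop: Higher regularity for reduced operator on L2}), one may bootstrap $(a,u)$ into $[\H^1(\Omega) \cap \L^2_0(\Omega)] \times [\H^2(\Omega;\IC^d) \cap \H^1_0(\Omega;\IC^d)]$. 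Testing the first equation against $\bar a$ and the second against $\bar u$, the cross terms $\int \divergence u\,\bar a + \int \nabla a \cdot \bar u$ cancel thanks to $u|_{\partial \Omega} = 0$, and the identity $\|\nabla u\|_{\L^2}^2 = \|\curl u\|_{\L^2}^2 + \|\divergence u\|_{\L^2}^2$ (valid for Dirichlet data) gives
\begin{align*}
\Re\lambda \bigl(\|a\|_{\L^2}^2 + \|u\|_{\L^2}^2\bigr) + \mu \|\curl u\|_{\L^2}^2 + (\mu + \mu') \|\divergence u\|_{\L^2}^2 = 0.
\end{align*}
Since $\mu > 0$ and $\mu + \mu' > 0$, this forces $u \equiv 0$, hence $\nabla a \equiv 0$, and the zero-average constraint yields $a \equiv 0$. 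Consequently, no eigenvalue lies in $\{\Re \lambda \leq 0\}$, and by discreteness there is a positive gap between $\sigma(A_{p,q,s})$ and the imaginary axis. Combined with the first step, $A_{p,q,s}$ is sectorial of some angle strictly less than $\pi/2$ with $0 \in \rho(A_{p,q,s})$, so Remark~\ref{Rem: Exponential decay} gives exponential stability of the semigroup.

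For the maximal $\L^q$-regularity statement, and especially the endpoint $q=1$, the natural route is to establish analytic semigroup generation on a slightly less regular scale $\cX^{s_0}_{p,q}$ with $s_0 < s$ by the same reduction-to-Lam\'e argument, to identify $\cX^s_{p,q}$ as the appropriate real interpolation space between $\cX^{s_0}_{p,q}$ and $\dom(A_{p,q,s_0})$ (so that ${\bf A}_{p,q,s}$ is the part of $A_{p,q,s_0}$ on that interpolation space), and finally to invoke Theorem~\ref{Thm: Da Prato-Grisvard} for the source term and Corollary~\ref{Cor: Initial values} to accommodate inhomogeneous initial data. The exponential stability obtained in the previous step is what allows the bounds to hold on the whole half-line $\IR_+$.

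The main obstacle I expect is the spectral analysis inside a bounded region: bootstrapping eigenfunctions from the Besov scale $\cX^s_{p,q}$ down to $\H^1 \times \H^2$ requires a careful use of the Lam\'e elliptic regularity together with the embeddings~\eqref{Eq: Retract to L2-based spaces}, which is precisely what justifies the additional constraints on $(p,q,s)$ stated at the beginning of the section. A secondary delicate point is the identification of the real interpolation space between $\cX^{s_0}_{p,q}$ and $\dom(A_{p,q,s_0})$ as $\cX^s_{p,q}$, since the density component of the domain coincides with that of the ground space, so the interpolation only genuinely acts on the velocity slot---this forces a direct appeal to Proposition~\ref{Prop: Identification ground spaces} applied to the velocity component alone.
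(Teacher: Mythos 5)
The most substantive gap is your claim that $\dom(A_{p,q,s}) \hookrightarrow \cX^s_{p,q}$ is compact, with discrete spectrum as a consequence. This is false: the density component of $\dom(A_{p,q,s})$, namely $\B^{s+1}_{p,q}(\Omega)\cap\L^p_0(\Omega)$, coincides with the density component of $\cX^s_{p,q}$, so the embedding is the identity on the first factor and cannot be compact; $A_{p,q,s}$ does not have compact resolvent (this is the signature of the hyperbolic density equation, which gains no spatial derivatives under the resolvent). Without discreteness, ruling out eigenvalues in $\{\Re\lambda\leq 0\}$ does not put that region in the resolvent set --- you have addressed injectivity but not surjectivity. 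The paper proves surjectivity directly: after eliminating $a$, it introduces a weak extension $\cA$ on $\L^2_0(\Omega)\times\W^{-1,2}(\Omega;\IC^d)$, solves $\fa_\lambda(u,\cdot)=\langle\cdot,G\rangle$ by Lax--Milgram using the coercivity estimate~\eqref{Eq: Coercivity with Poincare}, and then bootstraps regularity back to $\cX^s_{p,q}$; your $\L^2$ energy identity is essentially the coercivity the paper uses, but without a Lax--Milgram step (or some substitute) it is not enough. There is also a sign slip: after substituting $a=\lambda^{-1}(f-\divergence u)$ the reduced Lam\'e coefficient is $z_\lambda=\mu'+\lambda^{-1}$, not $\mu'-\lambda^{-1}$, and that sign feeds directly into the crucial coercivity condition $\mu+\Re(z_\lambda)=\mu+\mu'+\Re(\lambda)\lvert\lambda\rvert^{-2}>0$ on $\IC_+^{\times}$.

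Your last step (Da Prato--Grisvard at a lower regularity $s_0<s$) also does not go through, and the ``secondary delicate point'' you flag is in fact an obstruction: since the density slots of $\cX^{s_0}_{p,q}$ and $\dom(A_{p,q,s_0})$ coincide, the interpolation space $(\cX^{s_0}_{p,q},\dom(A_{p,q,s_0}))_{\theta,q}$ still has density slot $\B^{s_0+1}_{p,q}\cap\L^p_0$, strictly larger than $\B^{s+1}_{p,q}\cap\L^p_0$, and applying Proposition~\ref{Prop: Identification ground spaces} on the velocity slot alone cannot repair this. The paper takes a different route for Theorem~\ref{Thm: Maximal regularity compressible}: it first establishes local-in-time maximal $\L^q$-regularity (including $q=1$) by a Neumann-series perturbation on top of the Lam\'e maximal regularity of Theorem~\ref{Thm: Maximal regularity of Lame on Besov}, which already yields analytic semigroup generation by Dore; having in addition $0\in\rho(A_{p,q,s})$ and $\IC_+^{\times}\subset\rho(-A_{p,q,s})$, it concludes exponential stability and global maximal $\L^q$-regularity on $\IR_+$ from Dore's theorem, not from a fresh Da Prato--Grisvard argument on $A_{p,q,s}$. (The density-slot obstruction does bite for non-zero initial data, but the paper handles that separately with a caloric extension in the proof of Theorem~\ref{thm:linear}.)
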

\begin{proof} The main steps are as follows. First, we show that
for each  $0 < T < \infty,$ the operator $A_{p , q , s}$ 
has maximal $\L^q$-regularity on the interval $(0 , T)$
(which, in light of~\cite[Thm.~4.3]{Dore},  implies that 
 operator $- A_{p , q , s}$ generates an analytic semigroup on $\cX^s_{p , q}$).
Next, we prove that $0$ is in the resolvent set of  $-A_{p , q , s}.$
In the third step -- the core of the proof -- we establish that 
the whole right complex half-plane is in $\rho(-A_{p , q , s}).$ 
By standard arguments, putting all those informations together
allows to conclude the proof (last step). 

\subsubsection*{First step: local-in-time maximal regularity}

We want to show that, for each  $0 < T < \infty,$ the operator $A_{p , q , s}$ 
has maximal $\L^q$-regularity on the interval $(0 , T)$.
 To proceed, we introduce, for some $K>0$ that will be chosen later on,  the auxiliary problem 
\begin{equation}\label{eq:auxiliary}
 \Bigl(\tfrac{\d}{\d t}+K\Bigr) \begin{pmatrix} \wt a \\ \wt u \end{pmatrix} + A_{p , q , s} \begin{pmatrix} \wt a \\ \wt u \end{pmatrix} = \begin{pmatrix} \wt f \\ \wt g \end{pmatrix}
\end{equation} 
for $(\wt f , \wt g) \in \L^q (\IR_+ ; \cX^s_{p , q}),$ supplemented with null initial data.
\medbreak
Clearly, $(\wt a,\wt u)$ satisfies~\eqref{eq:auxiliary} if and only if 
$(a,u)(t):=\e^{Kt}(\wt a,\wt u)(t)$ is a solution of 
\begin{equation}\label{eq:lcns}
 \tfrac{\d}{\d t} \begin{pmatrix} a \\ u \end{pmatrix} + A_{p , q , s} \begin{pmatrix} a \\ u \end{pmatrix} = \begin{pmatrix} f \\ g \end{pmatrix}
\end{equation} 
with null initial data and $(f,g)(t):=\e^{Kt}(\wt f,\wt g)(t).$ 
\medbreak
The operator  
$\frac{\d}{\d t}+K$ with domain $\W^{1 , q}_0 (\IR_+ ; \B^s_{p , q} (\Omega ; \IC^d))$ is invertible on  $\L^q (\IR_+; \B^s_{p , q} (\Omega ; \IC^d)),$ with inverse
given by
$$\bigl(\tfrac{\d}{\d t}+K\bigr)^{-1}\wt f: t\mapsto \int_0^t \e^{-K(t-\tau)} \wt f(\tau)\, \d\tau.$$
Furthermore, it holds
\begin{equation}\label{Eq: Inverse of time derivative}
 \bigl\| \bigl(\tfrac{\d}{\d t}+K\bigr)^{-1}\wt f \bigr\|_{\L^q (\IR_+ ; \B^s_{p , q} (\Omega ; \IC^d))} \leq 
 K^{-1} \| \wt f \|_{\L^q (\IR_+; \B^s_{p , q} (\Omega ; \IC^d))}.
\end{equation}
By abuse of notation, we will keep the same notation $\frac{\d}{\d t} +K$ to designate 
the time derivative plus $K$  on $\L^q (\IR_+ ; \B^{s + 1}_{p , q} (\Omega) \cap \L^p_0 (\Omega)).$ 
To solve the parabolic problem~\eqref{eq:auxiliary}, 
 define
\begin{align*}
 \wt a := (\tfrac{\d}{\d t}+K)^{-1} (\wt f - \divergence \wt u),
\end{align*}
where $\wt u$ is the unknown to be determined. Plugging this choice into the momentum equation delivers
\begin{align*}
 \bigl(\tfrac{\d}{\d t}+K\bigr)\wt u + {\bf L}_{p , q , s} \wt u - \bigl(\tfrac{\d}{\d t}+K\bigr)^{-1} \nabla \divergence \wt u = \wt g - \bigl(\tfrac{\d}{\d t}+K\bigr)^{-1} \nabla \wt f =: G.
\end{align*}
Notice that $G$ is a function in $\L^q (\IR_+ ; \B^s_{p , q} (\Omega ; \IC^d))$. To compute $\wt u$, introduce the new
 function $\wt v := (\tfrac{\d}{\d t} + K+ {\bf L}_{p , q , s}) \wt u$. Then,
\begin{align*}
 \bigl(\tfrac{\d}{\d t}+K\bigr) \wt u + {\bf L}_{p , q , s} \wt u - \bigl(\tfrac{\d}{\d t}+K\bigr)^{-1} \nabla \divergence \wt u 
 &= \wt v - \bigl(\tfrac{\d}{\d t}+K\bigr)^{-1} \nabla \divergence((\tfrac{\d}{\d t} +K+ {\bf L}_{p , q , s})^{-1} \wt v).
\end{align*}
Notice that by virtue of~\eqref{Eq: Inverse of time derivative}, 
Theorem~\ref{Thm: Maximal regularity of Lame on Besov} and Lemma~\ref{Lem: Domain of part}
\begin{align*}
 \bigl(\tfrac{\d}{\d t}+K\bigr)^{-1} \nabla \divergence(\tfrac{\d}{\d t} + K+ {\bf L}_{p , q , s})^{-1} : \L^q (\IR_+ ; \B^s_{p , q} (\Omega ; \IC^d)) \to \L^q (\IR_+; \B^s_{p , q} (\Omega ; \IC^d))
\end{align*}
is bounded and that there exists $C > 0$ (independent of $K$) such that
\begin{align*}
 \bigl\| \bigl(\tfrac{\d}{\d t}+K\bigr)^{-1} \nabla \divergence(\tfrac{\d}{\d t} +K+ {\bf L}_{p , q , s})^{-1} \|_{\Lop(\L^q (\IR_+; \B^s_{p , q} (\Omega ; \IC^d)))} \leq CK^{-1}.
\end{align*}
Thus, if taking $K>C$, then one may conclude that  the operator
\begin{align*}
 \Id -  (\tfrac{\d}{\d t}+K)^{-1} \nabla \divergence(\tfrac{\d}{\d t} + K+ {\bf L}_{p , q , s})^{-1} : \L^q (\IR_+; \B^s_{p , q} (\Omega ; \IC^d)) \to \L^q (\IR_+ ; \B^s_{p , q} (\Omega ; \IC^d))
\end{align*}
is invertible by a Neumann series argument.
This allows to express $\wt v$ in terms of $G,$ and to eventually get
$$ \wt u = \bigl(\tfrac{\d}{\d t} +K+ {\bf L}_{p , q , s}\bigr)^{-1} \bigl[ \Id -  \bigl(\tfrac{\d}{\d t}+K\bigr)^{-1} \nabla \divergence(\tfrac{\d}{\d t} +K+ {\bf L}_{p , q , s})^{-1} \bigr]^{-1} G.$$
Then, reverting to the original parabolic problem~\eqref{eq:lcns}, 
one can conclude the maximal $\L^q$-regularity of $A_{p , q , s}$ on  each interval $(0,T),$
with constant $C\e^{KT}$.

\subsubsection*{Second step: showing that  $0 \in \rho(A_{p , q , s})$}

To show surjectivity of $A_{p,q,s},$  we have to solve 
for all $(f , g) \in \cX^s_{p , q}$, the system
\begin{align}
\label{Eq: Resolvent problem0}
\left\{ \begin{aligned}
  \divergence u &= f && \text{in } \Omega \\
  {\bf L}_{p , q , s} u + \nabla a &= g && \text{in } \Omega \\
 u &= 0 && \text{on } \partial \Omega.
\end{aligned} \right.
\end{align}
Take $v \in \B^{s + 2}_{p , q , D} (\Omega ; \IC^d)$ such that $\divergence v = f$. The existence of  $v$ is guaranteed by interpolating the higher-order estimates in~\cite[Prop.~2.10]{Kozono_Sohr}.
\medbreak 
By considering $u = v + w$ and $h=g-  {\bf L}_{p , q , s}  v,$  the problem is thus reduced to \begin{align*}
\left\{ \begin{aligned}
 \divergence w &= 0 && \text{in } \Omega \\
 {\bf L}_{p , q , s} w + \nabla a &= h && \text{in } \Omega \\
 w &= 0 && \text{on } \partial \Omega.
\end{aligned} \right.
\end{align*}
Of course, since $\divergence w=0,$ we have ${\bf L}_{p , q , s} w=-\mu\Delta w,$ and 
we thus have only to consider the Stokes system with homogeneous boundary 
condition and source term in $\B^s_{p,q}(\Omega;\IC^d),$ which is 
standard and can also be derived by interpolating the result in~\cite[Prop.~2.10]{Kozono_Sohr}.
Finally, injectivity of $A_{p,q,s}$ is an obvious consequence of
the corresponding property for the Stokes system. 

\subsubsection*{Third step: showing that  $\IC_+^{\times} := \{ z \in \IC \setminus \{ 0 \} : \Re(z) \geq 0 \}$
is a subset of  $\rho (- A_{p , q , s})$}
Given $(f , g) \in \cX^s_{p , q}$ and $\lambda \in \IC,$ the resolvent problem for the operator $- A_{p , q , s}$ reads: 
\begin{align}
\label{Eq: Resolvent problem}
\left\{ \begin{aligned}
 \lambda a + \divergence u &= f && \text{in } \Omega \\
 \lambda u + {\bf L}_{p , q , s} u + \nabla a &= g && \text{in } \Omega \\
 u &= 0 && \text{on } \partial \Omega.
\end{aligned} \right.
\end{align}
As a first, we are going to show the result for a closed extension of $A_{p , q , s}$ on $\L^2_0 (\Omega) \times \W^{-1 , 2} (\Omega ; \IC^d)$. To this end, set
\begin{align*}
 \cX := \L^2_0 (\Omega) \times \W^{-1 , 2} (\Omega ; \IC^d) \qquad \text{and} \qquad \dom(\cA) := \L^2_0 (\Omega) \times \W^{1 , 2}_0 (\Omega ; \IC^d).
\end{align*}
With $\fa$ denoting the sesquilinear form defined in~\eqref{Eq: Sesquilinear form Lame}, define $\cA: \dom(\cA) \subset \cX \to \cX$ by
\begin{align*}
 \cA : \quad \begin{pmatrix} a \\ u \end{pmatrix} \mapsto \begin{pmatrix} \divergence u \\ \W^{1 , 2}_0 (\Omega ; \IC^d) \ni v \mapsto \fa (u , v) - \langle a , \divergence v \rangle_{\L^2} \end{pmatrix}\cdotp
\end{align*}

To investigate the resolvent problem for $\cA$ in the case $\lambda\not=0,$
 we  eliminate $a$ in the second equation 
of~\eqref{Eq: Resolvent problem}, getting
$$a=\lambda^{-1}(f-\divergence u)\andf \lambda u + {\bf L}_{p , q , s} u - \lambda^{-1}\nabla \divergence u= g-\lambda^{-1}\nabla f.$$
To determine $u,$ it is thus natural to consider the following  sesquilinear form:
$$ \fa_\lambda:\left\{\begin{aligned}
 &\W^{1 , 2}_0 (\Omega ; \IC^d) \times \W^{1 , 2}_0 (\Omega ; \IC^d) \longrightarrow \IC, \\ 
& (u , v) \longmapsto \lambda \int_{\Omega} u \cdot \overline{v} \; \d x + \mu \int_{\Omega} \curl u \cdot \overline{\curl v} \; \d x + (\mu + \mu^{\prime} + \lambda^{-1}) \int_{\Omega} \divergence u\, \overline{\divergence v} \; \d x.
\end{aligned}\right.$$
For all $\lambda\in\IC_+^{\times},$
 $\fa_\lambda$ is bounded  on the Hilbert space $\W^{1 , 2}_0 (\Omega ; \IC^d),$ and
  $\Re\lambda \geq 0$ implies that
\begin{align*}
 \Re \bigg( \lambda \int_{\Omega} \lvert u \rvert^2 \; \d x + \lambda^{-1} \int_{\Omega} \lvert \divergence u \rvert^2 \; \d x \bigg) \geq 0 \qquad (u \in \W^{1 , 2}_0 (\Omega ; \IC^d)).
\end{align*}
Consequently, employing~\cite[Lem.~3.1]{Mitrea_Monniaux} and $\sqrt{2} \lvert z + \alpha \rvert \geq \lvert z \rvert + \alpha$ whenever $\alpha \geq 0$ and $z \in \IC$ with $\Re(z) \geq 0$, we deduce that there exists $c > 0$ such that
\begin{align}
\label{Eq: Coercivity of a}
 \Re (\fa_\lambda (u , u)) \geq c \bigg\{\int_{\Omega} \lvert \nabla u \rvert^2 \; \d x 
 + \Re \biggl( \lambda \int_{\Omega} \lvert u \rvert^2 \; \d x + \lambda^{-1} \int_{\Omega} \lvert \divergence u \rvert^2 \; \d x \biggr)\bigg\}\cdotp
\end{align}
Omitting the second term on the right-hand side of~\eqref{Eq: Coercivity of a} and employing Poincar\'e's inequality yields a constant $C > 0$ such that
\begin{align}
\label{Eq: Coercivity with Poincare}
 \Re (\fa_\lambda (u , u)) \geq C \| u \|_{\W^{1 , 2}_0 (\Omega ; \IC^d)}^2 \qquad (u \in \W^{1 , 2}_0 (\Omega ; \IC^d),\  \lambda\in\IC^{\times}_+).
\end{align}
An application  Lax--Milgram's theorem then yields the following lemma.
\begin{lemma}
\label{Lem: Lax-Milgram for weak operator}
Let $\lambda \in \IC^{\times}_+$. For every $G \in \W^{-1 , 2} (\Omega ; \IC^d)$ there exists a unique $u \in \W^{1 , 2}_0 (\Omega ; \IC^d)$ such that
\begin{align*}
 \fa_\lambda(u , v) = \langle v , G \rangle_{\W^{1 , 2}_0 , \W^{-1 , 2}} \qquad (v \in \W^{1 , 2}_0 (\Omega ; \IC^d)).
\end{align*}
Furthermore, there exists $C > 0$ such that
\begin{align*}
 \| u \|_{\W^{1 , 2}_0 (\Omega ; \IC^d)} \leq C \| G \|_{\W^{-1 , 2} (\Omega ; \IC^d)} \qquad (G \in \W^{-1 , 2} (\Omega ; \IC^d)).
\end{align*}
\end{lemma}
The previous lemma opens the way to  prove that  
$\IC^{\times}_+ \subset \rho(- \cA)$.
Indeed, let  $u \in \W^{1 , 2}_0 (\Omega ; \IC^d)$ be the unique function provided by Lemma~\ref{Lem: Lax-Milgram for weak operator} that satisfies
\begin{align}
\label{Eq: Auxiliary variational problem}
 \fa_\lambda (u , v) = \langle v , G \rangle_{\W^{1 , 2}_0 , \W^{-1 , 2}} \qquad (v \in \W^{1 , 2}_0 (\Omega ; \IC^d)),\with G:=g-\lambda^{-1}\nabla f.
\end{align}
Then, remembering  $a := \lambda^{-1} (f - \divergence u) \in \L^2_0 (\Omega),$
relation~\eqref{Eq: Auxiliary variational problem} turns into
\begin{align*}
\langle v , g \rangle_{\W^{1 , 2}_0 , \W^{-1 , 2}} &+ \lambda^{-1} \int_{\Omega} f \, \overline{\divergence v} \; \d x \\
 &= \lambda \int_{\Omega} u \cdot \overline{v} \; \d x + \mu \int_{\Omega} \curl u \cdot \overline{\curl v} \; \d x + (\mu + \mu^{\prime} + \lambda^{-1}) \int_{\Omega} \divergence u\, \overline{\divergence v} \; \d x \\
 &= \lambda \int_{\Omega} u \cdot \overline{v} \; \d x + \mu \int_{\Omega} \curl u \cdot \overline{\curl v} \; \d x + (\mu + \mu^{\prime}) \int_{\Omega} \divergence u \,\overline{\divergence v} \; \d x \\
 &\hspace{5cm}+ \lambda^{-1} \int_{\Omega} f \,\overline{\divergence v} \; \d x - \int_{\Omega} a \,\overline{\divergence v} \; \d x.
\end{align*}
Consequently, $\lambda u - \mu \Delta u - \mu^{\prime} \nabla \divergence u + \nabla a = g$ holds in the sense of distributions. \medbreak
To show that $a$ and $u$ are unique, let $(f , g)\equiv(0,0)$. Eliminating $a$ by the relation $\lambda a = - \divergence u$ yields that $u \in \W^{1 , 2}_0 (\Omega ; \IC^d)$ must satisfy
\begin{align*}
 \fa_\lambda (u , u) = 0.
\end{align*}
By virtue of~\eqref{Eq: Coercivity with Poincare} this implies that $u = 0$ what in turn implies that $a = 0$. 
\medbreak
To conclude the proof of  $\lambda \in \rho(- \cA),$ it suffices to show the closedness of $\lambda + \cA$. 
For this purpose, assume that $(a_j , u_j) \in \dom(\cA)$ converges in $\cX$ to some element $(a , u)$  and that there exists $(f , g) \in \cX$ such that
\begin{align*}
\begin{aligned}
 \lambda a_j + \divergence u_j &=: f_j \to f && \text{in } \L^2_0 (\Omega) \qquad \text{and} \\
 \lambda u_j - \mu \Delta u_j - \mu^{\prime} \nabla \divergence u_j + \nabla a_j &=: g_j \to g && \text{in } \W^{-1 , 2} (\Omega ; \IC^d).
\end{aligned}
\end{align*}
 Eliminating again $a_j$ in the second equation, testing the respective equations for $u_j$ and $u_\ell$ by $u_j - u_\ell$, $j , \ell \in \IN$, and taking differences of the resulting equations yields
\begin{align*}
 \lvert \fa_\lambda(u_j - u_\ell , u_j - u_\ell) \rvert &\leq \| g_j - g_\ell \|_{\W^{-1 , 2} (\Omega ; \IC^d)} \| u_j - u_\ell \|_{\W^{1 , 2}_0 (\Omega ; \IC^d)} \\
 &\qquad+ \lvert \lambda^{-1} \rvert \| f_j - f_\ell \|_{\L^2 (\Omega)} \| \divergence u_j - \divergence u_\ell \|_{\L^2 (\Omega)}.
\end{align*}
By virtue of~\eqref{Eq: Coercivity with Poincare} and Young's inequality one obtains a constant $C > 0$ independent of $j$ and $\ell$ such that
\begin{align*}
 \| u_j - u_\ell \|_{\W^{1 , 2}_0 (\Omega ; \IC^d)} \leq C \big( \| g_j - g_\ell \|_{\W^{-1 , 2} (\Omega ; \IC^d)} + \| f_j - f_\ell \|_{\L^2 (\Omega)} \big)\cdotp
\end{align*}
Consequently, $u \in \W^{1 , 2}_0 (\Omega ; \IC^d).$ It follows that $(a , u) \in \dom(\cA)$ and that $(a , u)$ satisfies the equation $(\lambda + \cA) (a , u) = (f , g)$.
This completes the proof of 
\begin{equation}\label{eq: Resolvent of weak operator}
\IC_+^{\times}  \subset \rho (-\cA). 
\end{equation}  
It is now easy to show  the injectivity of $\lambda + A_{p , q , s }$ for $\lambda \in \IC_+^{\times}$.
Indeed, since
 $\cX^s_{p , q} \subset \cX$ (cf.~\eqref{Eq: Retract to L2-based spaces}) the operator $\cA$ is an extension of $A_{p , q , s}$. In particular, it holds $\dom(A_{p , q , s}) \subset \dom(\cA)$. Thus, $(\lambda + A_{p , q , s}) (a , u) = 0$ implies that $(\lambda + \cA) (a , u)$ = 0 and~\eqref{eq: Resolvent of weak operator} in turn implies that $(a , u) = 0$.
\medbreak
Let us finally show that 
the range of $\lambda + A_{p , q , s}$ is $\cX^s_{p , q}$ for all $\lambda \in \IC_+^{\times}$. Thus, let $(f , g) \in \cX^s_{p , q}$. Since $\cX^s_{p , q} \subset \cX$, ~\eqref{eq: Resolvent of weak operator} implies that there exists $(a , u) \in \dom(\cA)$ with
$$  (\lambda + \cA) \begin{pmatrix} a \\ u \end{pmatrix} = \begin{pmatrix} f \\ g \end{pmatrix}$$
 that is to say,
 \begin{align}
\label{Eq: Transition to complex coefficients}
 a = \lambda^{-1} (f - \divergence u) \quad \text{and} \quad - \mu \Delta u - (\lambda^{-1} + \mu^{\prime}) \nabla \divergence u = g - \lambda^{-1} \nabla f - \lambda u =: h.
\end{align}
Here, the second equation is fulfilled in $\W^{-1 , 2} (\Omega ; \IC^d)$. To prove the surjectivity of $\lambda + A_{p , q , s}$ it suffices to show $(a , u) \in \dom(A_{p , q , s})$, which follows once we derive $u \in \B^{s + 2}_{p , q , D} (\Omega ; \IC^d)$. \par

For this purpose, notice that by assumption it holds
\begin{align*}
 \mu + \Re(\mu^{\prime} + \lambda^{-1}) = \mu + \mu^{\prime} + \Re(\lambda) \lvert \lambda \rvert^{-2} > 0.
\end{align*}
Thus, the operator
\begin{align*}
 L^{\lambda} := - \mu \Delta - (\mu^{\prime} + \lambda^{-1}) \nabla \divergence \qquad (\lambda \in \IC^{\times}_+),
\end{align*}
belongs to the class of operators that was studied in 
the previous section.  Notice that $u \in \W^{1 , 2}_0 (\Omega ; \IC^d)$ implies that $u \in \B^s_{r_0 , q} (\Omega ; \IC^d)$ for all $1 < r_0 < \infty$ with 
$1 /r_0>(s-1)/d+1/ 2$. 
If $1/p>(s-1)/d+1/ 2,$ then one can take $r_0= p$ so that  the right-hand side $h$ defined in~\eqref{Eq: Transition to complex coefficients} lies in $\B^s_{p , q} (\Omega ; \IC^d).$ Then, 
by Lemma~\ref{Lem: Domain of part} and Proposition~\ref{Prop: Identification ground spaces}, it follows that $u \in \B^{s + 2}_{p , q , D} (\Omega ; \IC^d),$ and we are done. 

If $1/p\leq (s-1)/d+1/ 2,$ then any $r_0$ that satisfies the inequality above satisfies $1/p < 1/r_0$. Moreover, it is possible to choose $r_0$ large enough such that $s > -1 + 1/r_0,$ so that Lemma~\ref{Lem: Domain of part} together with  Proposition~\ref{Prop: Identification ground spaces}
guarantees that $u \in \B^{s + 2}_{r_0 , q , D} (\Omega ; \IC^d)$. Then, by Sobolev embedding, $h$ lies in a better space, which in turn implies that $u$ lies in a better space. Iterating this process delivers eventually $u \in \B^{s + 2}_{p , q , D} (\Omega ; \IC^d)$.

\subsubsection*{Last step: proving the global-in-time maximal regularity}

Step 1 tells us that  the operator $A_{p , q , s}$ has maximal $\L^q$-regularity on finite time intervals,
and generates an analytic semigroup. Hence, by virtue of Remark~\ref{Rem: Analyticity} there exists $\vartheta \in (\pi / 2 , \pi)$ and $\lambda_0 > 0$ such that  $[B(0 , \lambda_0)^c \cap \Sec_{\vartheta}] \subset \rho(- A_{p , q , s}),$ and  $C > 0$ such that for all $\lambda \in [B(0 , \lambda_0)^c \cap \Sec_{\vartheta}],$ 
it holds
\begin{align}
\label{Eq: Resolvent estimate in proof}
 \| \lambda (\lambda + A_{p , q , s})^{-1} \|_{\Lop(\cX^s_{p , q})} \leq C.
\end{align}
Moreover, by virtue of the second step and of the openness of the resolvent set, there exists $\eps > 0$ such that $B(0 , \eps) \subset \rho(- A_{p , q , s})$. 
Since $$D_{\eps,\lambda}:=\IC_+^{\times} \cap [\overline{B(0 , \lambda_0)} \setminus B(0 , \eps / 2)]$$ is compact and since $\IC_+^{\times} \subset \rho(- A_{p , q , s}),$ there exists $C > 0$ such that
 Inequality~\eqref{Eq: Resolvent estimate in proof} holds on $D_{\eps,\lambda}.$ Now, because the resolvent set is open
and the  boundary of $D_{\eps,\lambda}$ along the imaginary axis is compact,
one can eventually find some  $\theta \in (\pi / 2 , \vartheta)$ such that $\Sec_{\theta} \subset \rho(- A_{p , q , s})$ and there exists $C > 0$ such that~\eqref{Eq: Resolvent estimate in proof} holds for all $\lambda \in \Sec_{\theta}$, see also Figure~\ref{Fig: Sector}. It follows that $- A_{p , q , s}$ generates a bounded analytic semigroup. Moreover, since $0 \in \rho(A_{p , q , s})$ this semigroup is exponentially stable. Finally,~\cite[Thm.~5.2]{Dore} implies that $A_{p , q , s}$ has maximal $\L^q$-regularity on $\IR_+$.
\begin{figure}
\centering
 \includegraphics[width=0.55\textwidth]{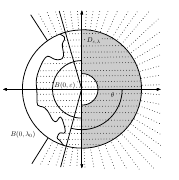}
 \caption{The ball $B(0 , \lambda_0)$ and half of the ball $B(0 , \eps)$ are depicted. The gray region visualizes the set $D_{\eps , \lambda}$. Due to the openness of the resolvent set (which is indicated by the dashed region), the spectrum must keep some distance to $D_{\eps , \lambda}$. One sees, that in this constellation one can find a sector $\Sec_{\theta}$ with $\theta \in (\pi / 2 , \vartheta)$ that is contained in the resolvent set.}
 \label{Fig: Sector}
\end{figure}
\end{proof}

For completeness, let us end the section proving Theorem~\ref{thm:linear}. 
As a start, we apply Theorem~\ref{Thm: Maximal regularity compressible}
with 
$q=1$ and notice that the last step of the proof ensures the existence of some $c>0$
depending only on $\mu,$  $\mu'$ and $\Omega$ so that 
$$\bigl\{z\in\IC: \Re (z)\geq -c\bigr\}\subset \rho(-A_{p,1,s}).$$
This implies that $A_{p,1,s}+\frac{c}{2}$ has maximal $\L^1$-regularity.
This yields Inequality~\eqref{eq:decay}. 
Of course,  Theorem~\ref{Thm: Maximal regularity compressible} 
directly yields that $(a,u)$ is in $\E_p.$ \par
To add non-zero initial data $(a_0 , u_0) \in \cX^s_{p , 1}$ in problem~\eqref{eq:LCNS} we cannot simply employ Corollary~\ref{Cor: Initial values}. The reason is that we would need to choose a ground space $\cX^t_{p , 1}$ for some $t$ slightly smaller than $s$. Then we would need to calculate the real interpolation space $(\cX^t_{p , 1} , \dom(A_{p , 1 , t}))_{\theta , 1}$. However, as the first components of $\cX^t_{p , 1}$ and $\dom(A_{p , 1 , t})$ are the same, the result of the real interpolation in this first component will be the very same space and thus we will not reach initial data in $\cX^s_{p , 1}$. \par
To circumvent this problem, consider the caloric extension
\begin{align*}
 \begin{pmatrix}
  a_c (t) \\ u_c (t)
 \end{pmatrix} := \begin{pmatrix} \e^{t \Delta_N} a_0 \\ \e^{t \Delta_D} u_0 \end{pmatrix} \cdotp
\end{align*}
Here, $\Delta_N$ denotes the Neumann Laplacian on $\B^{s + 1}_{p , 1} (\Omega) \cap \L^p_0 (\Omega)$ and $\Delta_D$ denotes the Dirichlet Laplacian on $\B^s_{p , 1} (\Omega ; \IC^d)$. Notice that both operators are invertible and that $\Delta_N$ generates a bounded analytic semigroup on $\L^p_0 (\Omega)$ while $\Delta_D$ generates a bounded analytic semigroup on $\W^{-1 , p} (\Omega ; \IC^d)$. An application of Corollary~\ref{Cor: Initial values} yields the existence of a constant $C > 0$ such that
\begin{align*}
 \| \partial_t a_c , a_c , \nabla^2 a_c \|_{\L^1 (\IR_+ ; \B^{s + 1}_{p , 1})} + \| \partial_t u_c , u_c , \nabla^2 u_c \|_{\L^1 (\IR_+ ; \B^s_{p , 1})} \leq C \| (a_0 , u_0) \|_{\cX^s_{p , 1}}.
\end{align*}
Notice that this together with the boundedness of the gradient operator between $\B^{s + 1}_{p , 1} (\Omega)$ and $\B^s_{p , 1} (\Omega ; \IC^d)$ implies that
\begin{align*}
 \| \nabla a_c \|_{\L^1 (\IR_+ ; \B^s_{p , 1})} \leq C \| a_c \|_{\L^1 (\IR_+ ; \B^{s + 1}_{p , 1})}.
\end{align*}
Now, let $(b , v) \in \E_p$ with $(b(0) , v(0)) = (0 , 0)$ solve
\begin{align*}
 \partial_t \begin{pmatrix}
  b \\ v
 \end{pmatrix} + A_{p , 1 , s} \begin{pmatrix} b \\ v \end{pmatrix} = - \partial_t \begin{pmatrix}
  a_c \\ u_c
 \end{pmatrix} - A_{p , 1 , s} \begin{pmatrix} a_c \\ u_c \end{pmatrix} \in \L^1 (\IR_+ ; \cX^s_{p , 1}).
\end{align*}
Then, for $a := b + a_c$ and $u := v + u_c$ one has $(a , u) \in \E_p$ and $(a , u)$ solve~\eqref{eq:LCNS} with $f$ and $g$ being zero and non-zero initial data.
\qed


\section{Global well-posedness for the compressible Navier--Stokes system}\label{s:global}

The fastest way  to solve  System~\eqref{eq:CNS} in the critical regularity 
setting is to  recast it in Lagrangian coordinates.
To this end,  let $X$ be the flow associated to $u,$ that is the solution to
\begin{equation}\label{eq:flow}
X(t,y)=y+\int_0^t u(\tau,X(\tau,y))\,\d\tau.
\end{equation}
The `Lagrangian' density and velocity are defined by 
\begin{equation}\label{eq:change}
\bar\rho(t,y):=\rho(t,X(t,y))\andf \bar u(t,y):=u(t,X(t,y)).
\end{equation}
With this notation,  relation~\eqref{eq:flow} becomes
\begin{equation}\label{eq:flow1}
X_\bu(t,y):=X(t,y)=y+\int_0^t \bar u(\tau,y)\,\d\tau,
\end{equation}
and thus 
\begin{equation}\label{eq:DX}
DX_\bu(t,y)=\Id+\int_0^t  D\bar u(\tau,y)\,\d\tau.
\end{equation}
The main interest of Lagrangian coordinates is that, whenever 
 $DX_\bu(t,y)$ is invertible,   the density is
entirely determined by $X_\bu$ and $\rho_0$ through the relation
\begin{equation}\label{eq:density}
\bar\rho(t,y)J_\bu(t,y)=\rho_0(y)\with J_\bu(t,y):= \det(DX_\bu(t,y)).
\end{equation}
Furthermore,  one can write
$$A_\bu(t,y):=(DX_\bu(t,y))^{-1}= J_\bu^{-1}(t,y)\,\adj(DX_\bu(t,y))$$
where $\adj(DX_\bu)$ (the adjugate matrix) stands for 
the transpose of the comatrix of $DX_\bu(t,y).$ 
Define the `twisted' deformation tensor and divergence operator by 
$$
D_A(z):=\frac12\Bigl(Dz\cdot A+{}^T\! A\cdot\nabla z\Bigr)
\andf \diva z:={}^T\!A:\nabla z=Dz:A,\quad \bigl(A\in \IR^d\times\IR^d\bigr)\cdotp$$
As shown in, e.g.,~\cite{D-Fourier}, 
in terms of the unknowns $\bar a:=\bar\rho-1$ and $\bar u,$ 
System~\eqref{eq:CNS}  translates into
\begin{equation}\label{eq:CNSlag}\left\{\begin{aligned}
&J_\bu\partial_t\bar  a+ (1+\bar a)D\bar u: \adj(DX_\bu)=0\quad&\hbox{in }\ \IR_+\times\Omega,\\
&\rho_0\partial_t\bar u-2\divergence\bigl(\mu(1+\bar a)\adj(DX_\bu)\cdot D_{A_\bu}(\bar u)\bigr)
-\nabla\bigl(\lambda(1+\bar a)\divergence_{A_\bu}\bar u\bigr)&\\&\hspace{5.5cm} +{}^T\!\!\adj(DX_\bu)\cdot\nabla(P(1+\bar a))=0&\quad\hbox{in }\ \IR_+\times\Omega,\\
&\bar u=0&\quad\hbox{on }\ \IR_+\times\partial\Omega,\\
&(\bar a,\bar u)|_{t=0}=(a_0,u_0)&\quad\hbox{in }\ \Omega.
\end{aligned}\right.\end{equation}
As pointed out in  the Appendix of~\cite{D-Fourier} (for $\IR^d$  but 
the proof in the bounded domain case is similar), in our functional 
framework, there exists $\eps>0$ such that whenever
\begin{equation}\label{eq:Du}
\int_0^T\|\nabla u\|_{\B^{d/p}_{p,1}(\Omega)}\,\d t\leq\eps,\end{equation}
 the Eulerian and Lagrangian formulations 
of the compressible Navier--Stokes equations are equivalent on $[0,T].$
\medbreak
The present section aims at proving a global existence result for small $(a_0,u_0)$  in the case $P'(1)>0.$ Note that,  after rescaling the time and velocity according to~\eqref{eq:rescaling}, 
System~\eqref{eq:CNSlag}  may be rewritten exactly as
\eqref{eq:lcns} with 
$$\begin{aligned}
f&:=(1-J_\bu)\partial_t\bar  a
+D\bar u: (\Id-\adj(DX_\bu)) - \bar a D\bar u:\adj(DX_\bu) ,\\
g&:=-a_0\partial_t\bar u+2\divergence\bigl(\wt\mu(\bar a)\adj(DX_\bu)\cdot D_{A_\bu}(\bar u)-\bar\mu D(\bar u)\bigr)
\\&\hspace{3cm}+\nabla\bigl((\wt\lambda(\bar a)\divergence_{\!A_\bu}\bar u-\bar\lambda\divergence\bar u)\bigr)
+(1-\Pi(\bar a))\nabla\bar a+\Pi(\bar a)(\Id-\adj(DX_\bu))\cdot\nabla\bar a.
\end{aligned}$$
Above, we denoted  $\wt\mu(z):=\mu(1+z),$
$\wt\lambda(z):=\lambda(1+z),$ $\Pi(z):=P'(1+z),$
$\bar\mu:=\mu(1)$ and $\bar\lambda:=\lambda(1).$ 
\medbreak
In the critical regularity setting, if we restrict ourselves to small perturbations of $(0,0),$
then one can expect  $f$ and $g$ (that contain only at least quadratic terms) 
to be even smaller. Hence, it looks reasonable to get  a global existence
result for~\eqref{eq:CNSlag} by taking advantage of our estimates for  the linearized system.
{}From the linear theory, we have the constraint $d/p-1 < 1/p$ (that is $p>d-1$)
and, when handling the nonlinear terms,  the additional conditions $p<2d$ and $d\geq2$ will  
  pop up. In the end, we will obtain the following result,
that is the counterpart of Theorem~\ref{Thm:global}, in Lagrangian coordinates.  Recall that $\E_p$ was defined by
\begin{align*}
 \E_p= \W^{1 , 1} (\IR_+ ; \B^{{d}/{p}}_{p,1} (\Omega) \times\B^{{d}/{p} - 1}_{p,1} (\Omega ; \IR^d)) \cap \L^1 (\IR_+ ; \B^{{d}/{p}}_{p,1} (\Omega)\times\B^{{d}/{p} + 1}_{p,1} (\Omega ; \IR^d))\cdotp
\end{align*}
\begin{proposition}\label{p:global}
Let the assumptions of Theorem~\ref{Thm:global} be in force.
Then, System~\eqref{eq:CNSlag} admits a unique global solution $(\bar a,\bar u)$
in the maximal regularity space $\E_p,$ and 
 there exist two positive constants  $c$ and $C$  depending only on the parameters of the system, 
on $p$, and on $\Omega,$ such that
\begin{eqnarray}\label{eq:globalbound}&\|\e^{ct}(\bar a,\bar u)\|_{\E_p}\leq C \bigl(\|a_0\|_{\B^{d/p}_{p,1}}
+ \|u_0\|_{\B^{d/p-1}_{p,1}}\bigr)\cdotp
\end{eqnarray}
\end{proposition}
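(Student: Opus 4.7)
The plan is to recast~\eqref{eq:CNSlag} as the fixed-point equation $(\bar a,\bar u)=\Phi(\bar a,\bar u)$, where $\Phi(\bar a,\bar u)$ is the unique solution of the linearized system~\eqref{eq:LCNS} with initial data $(a_0,u_0)$ and source terms $(f,g)$ as in the decomposition of the right-hand side of~\eqref{eq:CNSlag} stated just above the proposition, the existence of such a solution being furnished by Theorem~\ref{thm:linear} applied with $s=d/p-1$ and $q=1$. I would work in the exponentially weighted ball
\[
\cB_\alpha:=\bigl\{(b,v)\in\E_p\ :\ \|\e^{ct}(b,v)\|_{\E_p}\leq M\alpha\bigr\},
\]
with $M$ and $c$ the constants from~\eqref{eq:decay}. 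Once $\Phi$ is shown to send $\cB_\alpha$ into itself and to be a contraction there, Banach's fixed-point theorem will yield a unique solution, and~\eqref{eq:globalbound} will follow directly from~\eqref{eq:decay} combined with the smallness assumption on $(a_0,u_0)$.

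The core of the argument is to prove, for all $(\bar a,\bar u)\in\cB_\alpha$, a nonlinear estimate of the form
\[
\|\e^{ct}(f,g)\|_{\L^1(\IR_+;\B^{d/p}_{p,1}\times\B^{d/p-1}_{p,1})}\leq C\,\|\e^{ct}(\bar a,\bar u)\|_{\E_p}^{\,2}\,\bigl(1+\|(\bar a,\bar u)\|_{\E_p}\bigr)^{N},
\]
together with its Lipschitz analogue for differences of pairs in $\cB_\alpha$. The ingredients I have in mind are: (i) the embedding $\E_p\hookrightarrow\cC_b(\IR_+;\B^{d/p}_{p,1}(\Omega)\times\B^{d/p-1}_{p,1}(\Omega))$, providing uniform-in-time control of $(\bar a,\bar u)$; (ii) the algebra property of $\B^{d/p}_{p,1}(\Omega)$ together with a product law of the type $\|FG\|_{\B^{d/p-1}_{p,1}}\leq C\|F\|_{\B^{d/p}_{p,1}}\|G\|_{\B^{d/p-1}_{p,1}}$, which together impose the range $d-1<p<2d$ with $d\geq 2$; (iii) composition estimates $\|F(\bar a)-F(0)\|_{\B^{d/p}_{p,1}}\leq C\|\bar a\|_{\B^{d/p}_{p,1}}$ for the smooth maps $\wt\mu,\wt\lambda,\Pi,\adj,\det^{-1}$; and (iv) the flow bound $\|DX_{\bar u}-\Id\|_{\L^\infty(\IR_+;\B^{d/p}_{p,1})}\leq\|\bar u\|_{\L^1(\IR_+;\B^{d/p+1}_{p,1})}$, which guarantees that $J_{\bar u}$ stays close to $1$ and $\adj(DX_{\bar u})$ close to $\Id$ as soon as $\alpha$ is small enough, thereby also validating~\eqref{eq:Du} and the equivalence between the Eulerian and Lagrangian formulations.

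The key technical point---and the main obstacle---will be the propagation of the exponential weight $\e^{ct}$ through the nonlinearities. Each term in $f$ and $g$ is at least bilinear in $(\bar a,\bar u)$: one factor involves either $\partial_t$ or two extra spatial derivatives and therefore lies in $\L^1_t(\cdot)$ after being multiplied by $\e^{ct}$, while the remaining factor stays uniformly bounded in $\L^\infty_t(\cdot)$ by~(i). Writing $\e^{ct}FG=F\cdot(\e^{ct}G)$ with the weight attached to the $\L^1_t$-factor, and using Taylor remainders to extract one factor of $\bar a$ or of $\int_0^t D\bar u\,\d\tau$ from composite terms such as $\adj(DX_{\bar u})-\Id$, $1-J_{\bar u}$ or $\wt\mu(\bar a)-\bar\mu$, each contribution produces a bound quadratic in $\|\e^{ct}(\bar a,\bar u)\|_{\E_p}$. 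The delicate part will be the bookkeeping of Besov indices for these Lagrangian terms (it is precisely this bookkeeping that pins down $d-1<p<2d$); once the nonlinear estimates are secured, smallness of $\alpha$ absorbs all nonlinear constants into the linear ones, yielding simultaneously the self-map and the contraction properties of $\Phi$ on $\cB_\alpha$, and Banach's fixed-point theorem concludes.
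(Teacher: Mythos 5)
Your overall strategy is the same as the paper's: recast \eqref{eq:CNSlag} as a fixed-point equation, solve the linear problem via Theorem~\ref{thm:linear} (equivalently, via Theorem~\ref{Thm: Maximal regularity compressible} with $s=d/p-1$, $q=1$), run the contraction argument in a small exponentially weighted ball, and bound the nonlinear source terms using the product and composition laws in $\B^{d/p}_{p,1}(\Omega)$ together with Neumann-series control of $\adj(DX_{\bar u})-\Id$, $J_{\bar u}^{\pm1}-1$, $A_{\bar u}-\Id$. Two points deserve comment.

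First, your stated form of the nonlinear estimate, $\|\e^{ct}(f,g)\|_{\L^1}\lesssim\|\e^{ct}(\bar a,\bar u)\|_{\E_p}^2(1+\dots)^N$, and the accompanying claim that ``each term in $f$ and $g$ is at least bilinear in $(\bar a,\bar u)$'' are not quite accurate: the term $\bg^1=-a_0\,\partial_t\bar v$ is \emph{linear} in the solution, with the external datum $a_0$ as coefficient. Its contribution is of the form $\|a_0\|_{\B^{d/p}_{p,1}}\|\e^{ct}\partial_t\bar v\|_{\L^1(\B^{d/p-1}_{p,1})}\lesssim \alpha\,\|\e^{ct}(\bar b,\bar v)\|_{\E_p}$, not quadratic in the unknown. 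This still closes the self-map and contraction estimates because $\alpha$ is small, but the bookkeeping should be done as in the paper (yielding a bound of the shape $\|(\bar a,\bar u)\|_{\E_p^c}\leq C(\alpha+R^2)$ rather than $CR^2$), since an unqualified quadratic estimate would fail.

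Second, Banach's fixed-point theorem only gives uniqueness within the small ball $\cB_\alpha$. The statement asserts uniqueness in all of $\E_p$, so an extra argument is needed. The paper supplies it: given two solutions in $\E_p$, one restricts to a short interval $[0,T]$ on which both $\int_0^T\|\nabla\bar u_i\|_{\B^{d/p}_{p,1}}\,\d t$ are below the smallness threshold, runs the same Lipschitz estimate with one of the solutions being the small one already constructed, concludes local uniqueness, and then bootstraps in time. Without this step, your proof would deliver uniqueness only among solutions with $\|\e^{ct}(\bar a,\bar u)\|_{\E_p}\leq M\alpha$.

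Apart from these two points, the proposal matches the paper's proof.
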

\begin{proof}
Throughout, we use the short notation ${\bf A}$ for ${A_{p,1,d/p-1}}.$
The proof of existence is based on the fixed point theorem in the space  $\E_p^c$ defined by 
$$\E_p^c:=\bigl\{ (a,u): \IR_+\times\Omega \to \IR\times\IR^d\  \hbox{ s.t. }\ 
(\e^{tc}a, \e^{tc}u)\in\E_p\bigr\}$$
 for the map $\Phi:(\bb,\bv)\mapsto(\ba,\bu),$ where 
$(\ba,\bu)$ stands for the solution in $\E_p^c$ to the linear system
 \begin{equation}\label{eq:CNSlagbis}
 \tfrac{\d}{\d t} \begin{pmatrix} \ba \\ \bu \end{pmatrix} + {\bf A} \begin{pmatrix} \ba \\ \bu \end{pmatrix} = \begin{pmatrix} \bbf \\ \bg \end{pmatrix}
\end{equation} 
supplemented with initial data $(a_0,u_0)$ and  
$$\begin{aligned} \bbf&:=(1-J_{\bv})\partial_t\bb
+D\bv: (\Id-\adj(DX_\bv)) - \bb D\bv:\adj(DX_\bv) ,\\
\bg&:=-a_0\partial_t\bv
+2\divergence\bigl(\wt\mu(\bb)\adj(DX_\bv)\cdot D_{A_\bv}(\bv)-\bar\mu D(\bv)\bigr)\\
&\hspace{2cm}+\nabla\bigl((\wt\lambda(\bb)\divergence_{A_\bv}\bv-\bar\lambda\divergence\bv)\bigr)
+(1-\Pi(\bb))\nabla\bb+\Pi(\bb)(\Id-\adj(DX_\bv))\cdot\nabla\bb.
\end{aligned}$$
We claim that there exists some $R\in(0,1)$ such that, whenever $(\bb,\bv)$ belongs 
to the closed ball $\bar B_{\E_p^c}(0,R) := \{ (b,v) \in \E_p^c : \| (b,v) \|_{\E_p^c} \leq R \},$ System~\eqref{eq:CNSlagbis} 
admits a solution in $\bar B_{\E_p^c}(0,R).$ 
 Now, from Theorem~\ref{Thm: Maximal regularity compressible}, we gather that there exists some $c>0$ depending
 only on $\Omega,$ $p,$ $\mu$ and $\mu'$ such that
\begin{equation}\label{eq:gwp}
\|(\bar a,\bar u)\|_{\E_p^c} \lesssim \|(a_0,u_0)\|_{\cX^{d/p-1}_{p,1}}
+ \|\e^{ct}(\bbf,\bg)\|_{\L^1(\IR_+;{\cX^{d/p-1}_{p,1}})}.\end{equation}
Hence  our problem reduces to proving suitable estimates for $\bbf$ and $\bg.$
To this end, we need the following two results proved in Appendix:
\begin{proposition}\label{p:product} The numerical product is
continuous from $\B^s_{p,1}(\Omega) \times \B^{d/p}_{p,1}(\Omega)$ to $\B^s_{p,1}(\Omega)$
whenever  $-\min(d/p,d/p')<s\leq d/p.$\end{proposition}
\begin{proposition}\label{p:compo}
Let $K:\IR\to\IR$ be a smooth function vanishing at $0,$ and $p\in[1,\infty).$ 
 Then,  there exists $C > 0$  such that for all functions $z$
 belonging to $\B^{d/p}_{p,1}(\Omega),$ the function $K(z)$ belongs
 to  $\B^{d/p}_{p,1}(\Omega)$ and satisfies 
 $$ \|K(z)\|_{\B^{d/p}_{p,1}(\Omega)}\leq C (1+ \|z\|_{\B^{d/p}_{p,1}(\Omega)})^k
  \|z\|_{\B^{d/p}_{p,1}(\Omega)}\with k:=\lceil d/p\rceil.$$
  Furthermore (without assuming $K(0)=0$), for all pairs $(z_1,z_2)$ of functions
  in  $\B^{d/p}_{p,1}(\Omega),$  we have
  $$ \|K(z_2)-K(z_1)\|_{\B^{d/p}_{p,1}(\Omega)}\leq C(1+ \|z_1\|_{\B^{d/p}_{p,1}(\Omega)}
  + \|z_2\|_{\B^{d/p}_{p,1}(\Omega)})^k
  \|z_2-z_1\|_{\B^{d/p}_{p,1}(\Omega)}\with k:=\lceil d/p\rceil.$$
  \end{proposition}
For notational simplicity, we omit from now on the dependency on $\Omega$ in the norms.
Assume that $R$ has been chosen  so small as
\begin{equation}\label{eq:smallDv}
\|D\bv\|_{\L^1(\IR_+;\B^{d/p}_{p,1})}\leq\eps\ll1.
\end{equation}
In particular, owing to the embedding 
\begin{equation}\label{eq:embedLinfty}
\B^{d/p}_{p,1}(\Omega)\hookrightarrow \L^\infty(\Omega),
\end{equation}
the range of $\bb$ is included in  a small neighborhood of $0$ and the
functions $\wt\mu,$ $\wt\lambda$, and  $\Pi$ may thus be
extended smoothly to the whole $\IR$ without changing the value of $\bg$.
This allows to apply Proposition~\ref{p:compo} whenever it is needed.
\medbreak
Now,  decompose $\bbf$ into
$$\begin{aligned}\bbf &= (1-J_\bv)\partial_t\bb +D\bv:(\Id-\adj(DX_\bv))
-\bb\, D\bv:\adj(DX_\bv)\\
&=\bbf^1+\bbf^2+\bbf^3.
\end{aligned}$$
 Proposition~\ref{p:product} ensures that the space $\B^{d/p}_{p,1}$
 is stable under products.  Hence
$$\begin{aligned}\|\bbf^1\|_{\B^{d/p}_{p,1}}&\lesssim
 \|1-J_\bv\|_{\B^{d/p}_{p,1}} \|\partial_t\bar b\|_{\B^{d/p}_{p,1}},\\
\|\bbf^2\|_{\B^{d/p}_{p,1}}&\lesssim\|D\bv\|_{\B^{d/p}_{p,1}}
\|\Id-\adj(DX_\bv)\|_{\B^{d/p}_{p,1}},\\
\|\bbf^3\|_{\B^{d/p}_{p,1}}&\lesssim
\|\bb\|_{\B^{d/p}_{p,1}} \|D\bv\|_{\B^{d/p}_{p,1}}
\bigl(1+\|\Id-\adj(DX_\bv)\|_{\B^{d/p}_{p,1}}\bigr)\cdotp\end{aligned}$$
In order to bound  the right-hand sides (as well as  the terms
in $\bg$  below), we will use repeatedly the following inequality 
that is based on Neumann expansion arguments,~\eqref{eq:smallDv} and 
on the fact that $\B^{d/p}_{p,1}$ is stable under products
 (see details in the Appendix of~\cite{D-Fourier}  for the $\IR^d$ situation): 
 \begin{equation}\label{eq:J}
 \sup_{t\geq0} \Bigl(\|1-J_\bv(t)\|_{\B^{d/p}_{p,1}}
 +\|A_\bv(t)-\Id\|_{\B^{d/p}_{p,1}}+
\|\adj(DX_\bv(t))-\Id\|_{\B^{d/p}_{p,1}}\Bigr)\lesssim \|D\bv\|_{\L^1(\IR_+;\B^{d/p}_{p,1})}.
 \end{equation}
In the end, we  get
\begin{multline}\label{eq:f}
\|\e^{ct}\bbf\|_{\L^1(\IR_+;\B^{d/p}_{p,1})}\lesssim
\bigl(\|\e^{ct}\partial_t\bb\|_{\L^1(\IR_+;\B^{d/p}_{p,1})}\\
+(1+\|D\bv\|_{\L^\infty(\IR_+;\B^{d/p}_{p,1})})\|\e^{ct}\bb\|_{\L^\infty(\IR_+;\B^{d/p}_{p,1})}
+\|\e^{ct}D\bv\|_{\L^1(\IR_+;\B^{d/p}_{p,1})}\bigr)
\|D\bv\|_{\L^1(\IR_+;\B^{d/p}_{p,1})}.
\end{multline}
Next, we have to bound in $\L^1(\IR_+;\B^{d/p-1}_{p,1})$ the five terms 
constituting $\bg,$ namely
$$\begin{array}{ll}
\bg^1:= -a_0\partial_t\bv,
&\quad \bg^2:=2\divergence\bigl(\wt\mu(\bb)\adj(DX_\bv)\cdot D_{A_\bv}(\bv)-\bar\mu D(\bv)\bigr),\\[1.5ex]
\bg^3:= \nabla\bigl((\wt\lambda(\bb)\divergence_{A_\bv}\bv-\bar\lambda\divergence\bv)\bigr),
&\quad \bg^4:=(1-\Pi(\bb))\nabla\bb,\\[1.5ex]
\bg^5:=\Pi(\bb)(\Id-\adj(DX_\bv))\cdot\nabla\bb.&\end{array}$$ 
For $\bg^1,$  a direct application of Proposition~\ref{p:product} 
yields, provided $p<2d$ and $d\geq2,$ 
\begin{equation}\label{eq:g1}
\|\e^{ct}\bg^1\|_{\L^1(\IR_+;\B^{d/p-1}_{p,1})}\lesssim \|a_0\|_{\B^{d/p}_{p,1}}
\|\e^{ct}\partial_t\bv\|_{\L^1(\IR_+;\B^{d/p-1}_{p,1})}.\end{equation}
Similarly, combining Propositions~\ref{p:product} and~\ref{p:compo} yields
$$
\|\e^{ct}\bg^4\|_{\L^1(\IR_+;\B^{d/p-1}_{p,1})}\lesssim \|\e^{ct}\bb\|_{\L^\infty(\IR_+;\B^{d/p}_{p,1})}
\|\nabla\bb\|_{\L^1(\IR_+;\B^{d/p-1}_{p,1})}$$
and since (argue  by extension)
\begin{equation}\label{eq:deriv}
\nabla: \B^{s+1}_{p,1}(\Omega)\to\B^s_{p,1}(\Omega) \quad \text{is a bounded operator},
\end{equation}
one can conclude  that 
\begin{equation}\label{eq:g4}
\|\e^{ct}\bg^4\|_{\L^1(\IR_+;\B^{d/p-1}_{p,1})}\lesssim \|\e^{ct}\bb\|_{\L^\infty(\IR_+;\B^{d/p}_{p,1})}
\|\bb\|_{\L^1(\IR_+;\B^{d/p}_{p,1})}.\end{equation}
To handle $\bg^2$, we use the decomposition
$$
\bg^2=2\divergence\Bigl((\wt\mu(\bb)-\bar\mu)\adj(DX_\bv)\cdot D_{A_\bv}(\bv)
+\bar\mu(\adj(DX_\bv)-\Id)\cdot D_{A_\bv}(\bv)+ \bar\mu (D_{A_\bv}(\bv)-D(\bv))\Bigr)\cdotp
$$
{}From  the definition of $D_{A_\bv},$~\eqref{eq:smallDv} and~\eqref{eq:J}, we gather that
$$\|\e^{ct}(D_{A_\bv}(\bv)-D(\bv))\|_{\L^1(\IR_+;\B^{d/p}_{p,1})}
\lesssim \|\e^{ct}D\bv\|_{\L^1(\IR_+;\B^{d/p}_{p,1})} \|D\bv\|_{\L^1(\IR_+;\B^{d/p}_{p,1})}.$$
Hence, combining with Propositions~\ref{p:product} and~\ref{p:compo},~\eqref{eq:J} and~\eqref{eq:deriv}, as~\eqref{eq:smallDv} is fulfilled, we get 
\begin{equation}\label{eq:g2}
\|\e^{ct}\bg^2\|_{\L^1(\IR_+;\B^{d/p-1}_{p,1})}\lesssim
\bigl(1+\|\bb\|_{\L^\infty(\IR_+;\B^{d/p}_{p,1})}\bigr) \|\e^{ct}D\bv\|_{\L^1(\IR_+;\B^{d/p}_{p,1})}\|D\bv\|_{\L^1(\IR_+;\B^{d/p}_{p,1})}.
\end{equation}
Bounding $\bg^3$ is exactly the same. 
Finally, we have
$$\bg^5= \bigl(1+ (\Pi(\bb)-1)\bigr)(\Id-\adj(DX_\bv))\cdot\nabla\bb$$
and thus, combining Propositions~\ref{p:product} and~\ref{p:compo} with~\eqref{eq:deriv}, 
one ends up with 
\begin{equation}\label{eq:g5}
\|\e^{ct}\bg^5\|_{\L^1(\IR_+;\B^{d/p-1}_{p,1})}\lesssim 
\bigl(1+\|\bb\|_{\L^\infty(\IR_+;\B^{d/p}_{p,1})}\bigr)\|D\bv\|_{\L^1(\IR_+;\B^{d/p}_{p,1})}
\|\e^{ct}\bb\|_{\L^1(\IR_+;\B^{d/p}_{p,1})}.\end{equation}
Recall that the embedding $\W^{1 , 1} (\IR_+ ; \B^{{d}/{p}}_{p , 1}) \hookrightarrow \L^{\infty} (\IR_+ ; \B^{{d}/{p}}_{p , 1})$ allows to control the $\L^{\infty}$-norms of quantities involving $\bb$ by their norm in $\E_p$. Now, plugging Inequalities~\eqref{eq:f},~\eqref{eq:g1},~\eqref{eq:g4}, 
\eqref{eq:g2}, and~\eqref{eq:g5} in~\eqref{eq:gwp} and 
using the definition of  the  norm in  $\E_p$ yields 
$$\|\e^{ct}(\ba,\bu)\|_{\E_p} \leq C\Bigl(\|(a_0,u_0)\|_{\cX^{d/p-1}_{p,1}}
+ (1+\|(\bb,\bv)\|_{\E_p})\|(\bb,\bv)\|_{\E_p}\|\e^{ct}(\bb,\bv)\|_{\E_p}\Bigr)\cdotp$$
Remembering~\eqref{eq:smalldata} and 
 $(\bb,\bv)\in \bar B_{\E_p^c}(0,R)$  with $R\in(0,1),$ one thus gets up to a change of $C,$
$$\|(\ba,\bu)\|_{\E_p^c} \leq C(\alpha +R^2).$$
Therefore, choosing $R=2C\alpha$ and assuming that $4C\alpha\leq1,$
one can conclude that $(\ba,\bu)\in \bar B_{\E_p^c}(0,R).$
\medbreak
To complete the proof of existence of a fixed point for $\Phi,$ it is only a matter of
exhibiting its properties of contraction. So let us 
consider $(\bb_i,\bv_i)\in \bar B_{\E_p^c}(0,R)$ and $(\ba_i,\bu_i):=\Phi(\bb_i,\bv_i),$ $i=1,2.$
Denote $(\bbf_i,\bg_i),$ $i=1,2$ the right-hand sides of System \eqref{eq:CNSlagbis}
corresponding to $(\bb_i,\bv_i).$ 
Then, from Theorem~\ref{thm:linear}, we gather
\begin{equation}\label{eq:gwp1}
\|(\da,\du)\|_{\E_p^c} \lesssim  \|\e^{ct}(\df,\dg)\|_{\L^1(\IR_+;{\cX^{d/p-1}_{p,1}})},\end{equation}
where $\da:=\ba_2-\ba_1,$ $\du:=\bu_2-\bu_1,$ $\df:=\bbf_2-\bbf_1$, and  $\dg:=\bg_2-\bg_1.$
\medbreak
Let us use the short notation $\divergence_i:=\divergence_{v_i}$ and so on and also introduce $\db:=\bb_2-\bb_1$ and $\dv:=\bv_2-\bv_1$. We see that 
$$\begin{aligned}
\df^1&= (1-J_1)\partial_t\db  + (J_1-J_2)\partial_t\bb_2 ,\\
\df^2&= D\bv_1:\bigl(\adj(DX_1)-\adj(DX_2)\bigr)+ D\dv:(\Id-\adj(DX_2)),\\
\df^3&=\bb_1\bigl(D\bv_1:(\adj(DX_1)-\adj(DX_2))-D\dv:\adj(DX_2)\bigr) - \db\, D\bv_2:\adj(DX_2).
\end{aligned}$$
Since we have
$$A_2(t)-A_1(t) = \sum_{k=1}^\infty(-1)^k \sum_{j=0}^{k-1} \biggl(\int_0^t D\bv_2\,\d\tau\biggr)^j
\biggl(\int_0^t D\dv\,\d\tau\biggr)  \biggl(\int_0^t D\bv_1\,\d\tau\biggr)^{k-1-j}$$
and similar identities\footnote{More details may be found in the appendix of~\cite{D-Fourier}.}
  for $J_2(t)-J_1(t)$ and $\adj(DX_2(t))-\adj(DX_1(t)),$
we get thanks to the stability of $\B^{d/p}_{p,1}$ under multiplication and 
to~\eqref{eq:smallDv} (remember that $R$ is small) that for all $t\geq0,$
\begin{multline}\label{eq:dA}
\|A_2(t)-A_1(t)\|_{\B^{d/p}_{p,1}}
+\|\adj(DX_2(t))-\adj(DX_1(t))\|_{\B^{d/p}_{p,1}}\\
+\|J_2^{\pm1}(t)-J_1^{\pm1}(t)\|_{\B^{d/p}_{p,1}}\lesssim
 \|D\dv\|_{\L^1(\IR_+;\B^{d/p}_{p,1})}.\end{multline}
 Hence,  using 
once more the stability of $\B^{d/p}_{p,1}$ under multiplication
eventually yields
\begin{multline}\label{eq:df} \| \e^{ct} \df\|_{\L^1(\IR_+; \B^{d/p}_{p,1})}\lesssim  \bigl(\|D\bv_1\|_{\L^1(\IR_+;\B^{d/p}_{p,1})}+\|D\bv_2\|_{\L^1(\IR_+;\B^{d/p}_{p,1})} \\+ \| \partial_t \bb_2 \|_{\L^1 (\IR_+ ; \B^{{d}/{p}}_{p , 1})} + \| \bb_1 \|_{\L^{\infty} (\IR_+ ; \B^{{d}/{p}}_{p , 1})} \bigr) 
 \cdot \bigl(\| \e^{ct} D\dv\|_{\L^1(\IR_+;\B^{d/p}_{p,1})} + \| \e^{ct} \partial_t \db \|_{\L^1 (\IR_+ ; \B^{{d}/{p}}_{p , 1})} \bigr). \end{multline}
 We compute:
 $$\begin{aligned}
\dg^1&:= -a_0\partial_t\dv,\\
\dg^2&:=2\divergence\Bigl((\wt\mu(\bb_2)-\wt\mu(\bb_1))\adj(DX_2)\cdot D_{A_2}(\bv_2)\\
&+\wt\mu(\bb_1)\bigl(\bigl(\adj(DX_1)-\Id\bigr)\cdot\bigl(D_{A_2}(\bv_2)- D_{A_1}(\bv_1)\bigr)
+\bigl(\adj(DX_2)-\adj(DX_1)\bigr)\cdot D_{A_2}(v_2)\bigr)\\
&\qquad\qquad+(\wt\mu(\bb_1)-\bar\mu)\bigl(D_{A_2}(\bv_2)- D_{A_1}(\bv_1)\bigr)
+\bar\mu\bigl(D_{A_2}(\bv_2)- D_{A_1}(\bv_1)-D(\dv)\bigr)\Bigr),\\
\dg^3&:= \nabla\Bigl(\bigl(\wt\lambda(\bb_2)-\wt\lambda(\bb_1)\bigr)\divergence_{A_2}\bv_2
+\bigl(\wt\lambda(\bb_1)-\bar\lambda\bigr)
\bigl(\bigl(\divergence_{A_2}\bv_2-\divergence_{A_2}\bv_1\bigr)
\\&\qquad\qquad\qquad+\bigl(\divergence_{A_2}\bv_1-\divergence_{A_1}\bv_1\bigr)\bigr)
+\bar\lambda\bigl(\divergence_{A_2}\bv_2- \divergence_{A_1}\bv_1-\divergence\dv\bigr)\Bigr),\\
\dg^4&:=(1-\Pi(\bb_1))\nabla\db+\bigl(\Pi(\bb_1)-\Pi(\bb_2)\bigr)\nabla\bb_2,\\
\dg^5&:=\Pi(\bb_1)(\Id-\adj(DX_1))\cdot\nabla\db + \Pi(\bb_1)(\adj(DX_1)-\adj(DX_2))\cdot\nabla\bb_2\\
&\hspace{7cm}+ (\Pi(\bb_2)-\Pi(\bb_1))(\Id-\adj(DX_2))\cdot\nabla\bb_2.
\end{aligned} $$ 
It is straightforward that 
\begin{equation}\label{eq:dg1}
\|\e^{c t} \dg^1\|_{\B^{d/p-1}_{p,1}}\leq C\|a_0\|_{\B^{d/p}_{p,1}} \| \e^{ct} \partial_t\dv\|_{\B^{d/p-1}_{p,1}}.
\end{equation}
Next, from Propositions~\ref{p:product} and~\ref{p:compo},  ~\eqref{eq:deriv} and Inequality~\eqref{eq:dA},
we easily get for $i=2,3,4,5,$
$$
\begin{aligned}
\|\e^{ct} \dg^i\|_{\L^1 (\IR_+ ; \B^{d/p-1}_{p,1})} 
&\lesssim \Big(\|\bb_2, \nabla \bv_1 , \nabla\bv_2\|_{\L^1(\IR_+ ; \B^{d/p}_{p,1})} + \| \bb_1 \|_{\L^{\infty} (\IR_+ ; \B^{{d}/{p}}_{p , 1})} \Big) \| \e^{ct} \db\|_{\L^1(\IR_+ ; \B^{d/p}_{p,1})} \\
 &\quad + \Big( \|\bb_1 , \bb_2, \nabla \bv_1, \nabla \bv_2\|_{\L^1(\IR_+ ; \B^{d/p}_{p,1})} + \|\bb_1\|_{\L^\infty(\IR_+ ; \B^{d/p}_{p,1})} \Big) \| \e^{ct} \db \|_{\L^{\infty}(\IR_+ ; \B^{d/p}_{p,1})} \\
 &\quad + \Big( \|\bb_1 , \bb_2, \nabla \bv_1, \nabla \bv_2\|_{\L^1(\IR_+ ; \B^{d/p}_{p,1})} + \|\bb_1\|_{\L^\infty(\IR_+ ; \B^{d/p}_{p,1})} \Big) \| \e^{ct} \nabla\dv\|_{\L^1(\IR_+ ; \B^{d/p}_{p,1})}.
 \end{aligned}
$$
Note again, that the embedding $\W^{1 , 1} (\IR_+ ; \B^{{d}/{p}}_{p , 1}) \hookrightarrow \L^{\infty} (\IR_+ ; \B^{{d}/{p}}_{p , 1})$ allows to control the $\L^{\infty}$-norms of quantities involving $\bb_1$ or $\bb_2$ by their norm in $\E_p$. Altogether, we conclude that 
$$\|(\da,\du)\|_{\E_p^c} \leq C(R+\alpha)\|(\db,\dv)\|_{\E_p^c}.$$
Since we chose $R$ of order $\alpha,$ we see that, indeed, the map $\Phi$ 
is contracting provided $\alpha$ is small enough. 
Then, Banach fixed point theorem ensures that $\Phi$ admits
a fixed point in $\bar B_{\E_p^c}(0,R).$
Hence, we have a solution for~\eqref{eq:CNSlag} with the desired property. 
\medbreak
In order to prove the uniqueness, consider two solutions $(\ba_1,\bu_1)$ and $(\ba_2,\bu_2)$  in $\E_p^c$ of~\eqref{eq:CNSlag} supplemented with  the same data $(\rho_0,u_0).$ 
Then, we have $(\ba_i,\bu_i)=\Phi((\ba_i,\bu_i)),$ $i=1,2,$  and one can 
repeat the previous computation on  any interval $[0,T]$ such that 
$$\max\biggl(\int_0^T\|\nabla\bu_1\|_{\B^{d/p}_{p,1}}\,\d t,
\int_0^T\|\nabla\bu_2\|_{\B^{d/p}_{p,1}}\,\d t\biggr)\leq \eps\ll1.$$
On such an interval, we obtain  (with obvious notation)
 $$\|(\da,\du)\|_{\E_p(T)} \leq  C \bigl(\|(\ba_1,\bu_1)\|_{\E_p(T)}+\|(\da,\du)\|_{\E_p(T)}\bigr)
 \|(\da,\du)\|_{\E_p(T)}.$$
 Since the function $t\mapsto \|(\da,\du)\|_{\E_p(t)}$ is continuous and
 vanishes at $0$ and because one can assume with no loss of generality
 that $(\ba_1,\bu_1)$ is the small solution constructed just above, 
 we get uniqueness on $[0,T].$
 Then, using a standard bootstrap argument yields uniqueness for all time. 
 \end{proof}


\section{Local existence for general data with no vacuum}\label{s:local}

For achieving the  local well-posedness  of the compressible Navier--Stokes equations,
 there is no need to take
the linear coupling of the density and velocity equations into consideration, and 
the sign of $P'$ does not matter.
Actually, in the Lagrangian formulation~\eqref{eq:CNSlag},
 it is  enough to solve the velocity  equation, since $J_\bu\bar\rho=\rho_0$ and 
$J_\bu$~may be computed from $\bar u.$ 
The pressure may be seen as a source term,  and combining 
Corollary~\ref{c:lame} with $s=d/p-1$ and $q=1,$
with suitable nonlinear estimates allows  to solve~\eqref{eq:CNSlag} locally in the critical  regularity setting. 

Clearly, a basic perturbative method relying on our reference linear system with  constant coefficients  is bound to  fail if the density variations are too large.
However, since, in our  functional setting, $\rho_0$ has to be
uniformly continuous in $\Omega,$  one can expect that difficulty  to be challengeable
if using a suitable localization argument. 
\medbreak
Here, for expository purpose,  we first present the proof 
of the local well-posedness in the easier case where $\rho_0$ is close to some positive
constant. Then, we  explain what has to be modified to tackle the general 
case where one just assumes  that it is bounded away from $0.$

\subsection{The case of small variations of density}

Our goal here is to establish the following result that  implies Theorem~\ref{Thm:local}
in the case of small  density variations.
 \begin{proposition}\label{p:local} Let the assumptions of 
 Theorem~\ref{Thm:local} be in force, and assume in addition 
 that, for a small enough $\alpha>0,$ we have
 \begin{equation}\label{eq:smalldata2} 
\|a_0\|_{\B^{d/p}_{p,1}(\Omega)}\leq \alpha.
\end{equation}
Then, System~\eqref{eq:CNSlag} admits a unique  solution $(\bar a,\bar u)$
on some interval $[0,T],$  such that $1+\bar a := J_\bu^{-1}\rho_0$ is bounded
away from zero on $[0,T]\times\Omega$ and 
$$(\ba,\bu)\in \W^{1 , 1}(0,T;\B^{d/p}_{p,1} (\Omega) \times \B^{{d}/{p} - 1}_{p,1} (\Omega ; \IR^d)) \cap \L^1(0,T; \B^{{d}/{p}}_{p,1} (\Omega) \times \B^{d/p-1}_{p,1} (\Omega ; \IR^d)). $$
\end{proposition}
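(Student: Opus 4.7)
The plan is to eliminate the density from the Lagrangian system by means of the mass identity $1+\bar a = \rho_0/J_{\bar u}$ (recall~\eqref{eq:density}) and to set up a Banach fixed-point argument for the velocity alone, viewing the momentum equation as a perturbation of the constant-coefficient Lam\'e system with viscosities $\bar\mu:=\mu(1)$ and $\bar\lambda:=\lambda(1)$. The finite-time maximal $\L^1$-regularity provided by Corollary~\ref{c:lame} (with $s=d/p-1$ and $q=1$) is what enables us to work at critical regularity. Since $\rho_0\partial_t\bar u=\partial_t\bar u+a_0\partial_t\bar u$, the variable mass factor is absorbed as a small perturbation on the right-hand side.

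Fix $T>0$ to be chosen later and let $\bar u_L$ denote the free Lam\'e evolution associated to $(\bar\mu,\bar\lambda)$ with data $u_0$, provided by Corollary~\ref{c:lame}, in the space
\begin{align*}
\mathcal{F}_p(T):= \cC_b([0,T];\B^{d/p-1}_{p,1}(\Omega;\IR^d))\cap\W^{1,1}(0,T;\B^{d/p-1}_{p,1}(\Omega;\IR^d))\cap \L^1(0,T;\B^{d/p+1}_{p,1}(\Omega;\IR^d)).
\end{align*}
Since $\bar u_L\in \L^1(\IR_+;\B^{d/p+1}_{p,1})$, the quantity $\eta(T):=\int_0^T\|D\bar u_L\|_{\B^{d/p}_{p,1}}\,\d t$ tends to $0$ as $T\to 0$, by dominated convergence. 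Given $R>0$ to be fixed, I would construct a fixed-point map $\Phi$ on the closed ball $\bar B_{\mathcal{F}_p(T)}(\bar u_L,R)$: to $\bar v$ in that ball, associate $\bar b:=\rho_0/J_{\bar v}-1$ (which, by the analogue of~\eqref{eq:J} on $[0,T]$ and smallness of $a_0$, $\eta(T)$, and $R$, remains small in $\L^\infty(0,T;\B^{d/p}_{p,1})$), and let $\Phi(\bar v):=\bar u$ be the unique solution in $\mathcal{F}_p(T)$ to
\begin{align*}
\partial_t\bar u-\bar\mu\Delta\bar u-(\bar\mu+\bar\lambda)\nabla\divergence\bar u=F(\bar b,\bar v),\qquad \bar u|_{\partial\Omega}=0,\qquad \bar u|_{t=0}=u_0,
\end{align*}
where $F(\bar b,\bar v)$ collects the mass-matrix perturbation $-a_0\,\partial_t\bar v$, the differences $2\divergence(\mu(1+\bar b)\adj(DX_{\bar v})\cdot D_{A_{\bar v}}(\bar v)-\bar\mu D(\bar v))$ and $\nabla(\lambda(1+\bar b)\divergence_{A_{\bar v}}\bar v-\bar\lambda\divergence\bar v)$, and the pressure source $-{}^T\!\adj(DX_{\bar v})\cdot\nabla P(1+\bar b)$.

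The core of the argument is to estimate $F(\bar b,\bar v)$ in $\L^1(0,T;\B^{d/p-1}_{p,1})$ exactly as in the proof of Proposition~\ref{p:global}, using Propositions~\ref{p:product} and~\ref{p:compo} together with the Neumann-series bounds for $J_{\bar v}-1$, $A_{\bar v}-\Id$, and $\adj(DX_{\bar v})-\Id$ (the finite-time analogue of~\eqref{eq:J}). This yields a control of the form
\begin{align*}
\|F(\bar b,\bar v)\|_{\L^1(0,T;\B^{d/p-1}_{p,1})}\leq C(\alpha+\eta(T)+R)\,\|\bar v\|_{\mathcal{F}_p(T)}+C\,T\bigl(1+\|\bar b\|_{\L^\infty(0,T;\B^{d/p}_{p,1})}\bigr).
\end{align*}
The last summand, which comes from the pressure term, is the only contribution that is not automatically small: it must be absorbed by the factor $T$ arising from an explicit temporal integration. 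Combined with the linear estimate~\eqref{eq:inegu}, one deduces that for $\alpha$, $T$, and $R$ suitably small (first fix $R$, then take $T$ so small that $\eta(T)+T\ll R$), $\Phi$ maps the ball into itself.

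Contraction of $\Phi$ follows from the same type of difference estimates carried out in the proof of Proposition~\ref{p:global}, based on the expansion formulas for $A_{\bar v_2}-A_{\bar v_1}$, $J_{\bar v_2}-J_{\bar v_1}$, and $\adj(DX_{\bar v_2})-\adj(DX_{\bar v_1})$; the absence of smallness of $u_0$ is compensated by working in a ball centred at $\bar u_L$ rather than at the origin. Banach's fixed-point theorem then provides a unique $\bar u\in\bar B_{\mathcal{F}_p(T)}(\bar u_L,R)$ solving~\eqref{eq:CNSlag}, and the positivity $\inf_{[0,T]\times\Omega}(1+\bar a)>0$ is automatic from the smallness of $\|1-J_{\bar u}\|_{\L^\infty}$ and $\|a_0\|_{\L^\infty}$. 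Uniqueness for arbitrary solutions in the prescribed class follows by the standard bootstrap argument used at the end of Proposition~\ref{p:global}. I expect the main obstacle to be the pressure source $\nabla P(1+\bar b)$: it is not small for free and lives exactly at the critical regularity $\B^{d/p-1}_{p,1}$, so extracting the factor of $T$ from an explicit temporal integration is what makes the whole scheme close.
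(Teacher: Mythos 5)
Your overall strategy coincides with the paper's: recast~\eqref{eq:CNSlag} as a perturbation of the constant-coefficient Lam\'e flow with viscosities $\bar\mu,\bar\lambda$, absorb $a_0\partial_t\bar u$ into the right-hand side, set up a Banach fixed point in a ball around the free Lam\'e evolution $u_L$, control the nonlinear terms via Propositions~\ref{p:product} and~\ref{p:compo} and the Neumann-series bounds, and gain a factor of $T$ on the pressure term. That is exactly what the paper does.

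However, your displayed estimate on $F$ does not, as written, close the self-map step. Since your $\F_p(T)$ includes $\cC_b([0,T];\B^{d/p-1}_{p,1})$, one has $\|\bar v\|_{\F_p(T)}\gtrsim\|u_0\|_{\B^{d/p-1}_{p,1}}$, which is \emph{not} small (the local theorem has no smallness hypothesis on $u_0$). Consequently the bound
\begin{equation*}
\|F\|_{\L^1(0,T;\B^{d/p-1}_{p,1})}\leq C(\alpha+\eta(T)+R)\,\|\bar v\|_{\F_p(T)}+CT\bigl(1+\|\bar b\|_{\L^\infty(0,T;\B^{d/p}_{p,1})}\bigr)
\end{equation*}
contains a term $\gtrsim CR\,\|u_0\|_{\B^{d/p-1}_{p,1}}$, which cannot be forced below $R$ when $\|u_0\|\geq 1/C$, no matter how small $\alpha,\eta(T),T$ are. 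The resolution (which is precisely what the paper exploits) is that the source $F$ depends on $\bar v$ only through $\partial_t\bar v$, $D\bar v$, and the time-integrals $\int_0^t D\bar v$; their $\L^1(0,T)$-norms are bounded by $R$ plus a quantity tending to $0$ as $T\to 0$ (by dominated convergence applied to $\partial_t u_L=-{\bf L}u_L$ and $\nabla u_L$), and never involve $\|\bar v\|_{\cC_b}$. This refinement yields the sharper bound $\|F\|_{\L^1(0,T;\B^{d/p-1}_{p,1})}\leq C(\alpha+R)(T+R)$, which closes the self-map with $R=\alpha$, $T\leq\alpha$, and $8C\alpha\leq 1$. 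Apart from this, your argument matches the paper's.
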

\begin{proof}
Throughout, we use the short notation ${\bf L}$ for ${\bf L}_{p,1,d/p-1}.$
Since the variations of density are small, 
one can look at  the velocity equation as follows: 
$$\displaylines{\quad
\partial_t\bar u+ {\bf L}\bar u=-a_0\partial_t \bar u
+2\divergence\bigl(\wt\mu(\bar a)\adj(DX_{\bar u})\cdot D_{A_{\bar u}}(\bar u)-\bar\mu D(\bar u)\bigr)\\
\hfill\cr\hfill+\nabla\bigl((\wt\lambda(\bar a)\divergence_{A_{\bar u}}\bar u-\bar\lambda\divergence\bar u)\bigr)
-{}^T\!\adj(DX_{\bar u})\cdot\nabla(P(1+\bar a))\quad}$$
with $\bar a$ given by 
$$\bar a= J^{-1}_\bu\rho_0-1= (J^{-1}_\bu-1)(1+a_0) + a_0.$$
To proceed, we introduce for $T>0,$ the space
$$\F_p(T):=\W^{1 , 1}(0,T;\B^{{d}/{p} - 1}_{p,1} (\Omega ; \IR^d)) \cap \L^1(0,T;\B^{d/p-1}_{p,1} (\Omega ; \IR^d))\cdotp$$
We consider the map 
$\Psi: \bv\mapsto \bu$ where  $\bu$ is the solution to 
$$\partial_t\bar u+ {\bf L}\bar u= \bh\quad\hbox{in }\ (0,T)\times\Omega\andf
u|_{t=0}=u_0\ \hbox{ in }\ \Omega,$$
with $\bh=\bh^1+\bh^2+\bh^3+\bh^4$ and 
$$\begin{aligned}
&\bh^1:=-a_0\partial_t \bv,\quad &&\bh^2:=2\divergence\bigl(\wt\mu(\bb)\adj(DX_\bv)\cdot D_{A_\bv}(\bv)-\bar\mu D(\bv)\bigr)\\
&\bh^3:=\nabla\bigl((\wt\lambda(\bb)\divergence_{A_{\bv}}\bv-\bar\lambda\divergence\bv)\bigr),\quad
&&\bh^4:=-{}^T\!\adj(DX_\bv)\cdot\nabla(P(1+\bb))\cdotp\end{aligned}$$
Above, the function $\bb$ is defined by 
\begin{equation}\label{def:bb}\bb= J^{-1}_\bv\rho_0-1= (J^{-1}_\bv-1)(1+a_0) + a_0.\end{equation}
We claim that there exists $\alpha>0$ in~\eqref{eq:smalldata2} such that 
for small enough  $R,T>0,$ the function $\Psi$ is a self-map on   $\bar B_{\F_p(T)}(u_L ,R),$ where $u_L:=\e^{-t{\bf L}}u_0$.
To justify  our claim,  we set $\wt v:=\bv-u_L$ and
look for $\bu$ under the form  $\bu:=u_L+\wt u$ with $\wt u$ satisfying
   $$\partial_t\wt u+ {\bf L}\wt u= \bh\quad\hbox{in }\ (0,T)\times\Omega\andf \wt u|_{t=0}=0\ \hbox{ in }\ \Omega.$$
Consequently,  Corollary~\ref{c:lame} yields some $C>0$ independent of $T>0$ such that
\begin{equation}\label{eq:wtu}
\|\wt u\|_{\F_p(T)} \leq C\|\bh\|_{\L^1(0,T;\B^{d/p-1}_{p,1})}
\andf \|u_L\|_{\F_p(T)} \leq C\|u_0\|_{\B^{d/p-1}_{p,1}}.\end{equation}
By Lebesgue's dominated convergence theorem, $\|\nabla u_L\|_{\L^1(0,T;\B^{d/p}_{p,1})}$
converges to $0$ as $T\to0.$ Hence, for any $R>0,$ one can find $T>0$ so that
\begin{equation}\label{eq:DuL}
\int_0^T\|\nabla u_L\|_{\B^{d/p}_{p,1}}\,\d t\leq \frac{R}{2}\cdotp
\end{equation}
Next, we have to bound $\bh^1$ to $\bh^4$ in $\L^1(0,T;\B^{d/p-1}_{p,1}).$
We shall use repeatedly Proposition~\ref{p:product} with $s\in\{d/p,d/p-1\}$ and Proposition
\ref{p:compo}, as well as the local-in-time version of~\eqref{eq:J}. 
First, it is obvious that 
$$\begin{aligned}
\|\bh^1\|_{\L^1(0,T;\B^{d/p-1}_{p,1})}&\lesssim
\|a_0\|_{\B^{d/p}_{p,1}}\|\partial_t\bv\|_{\L^1(0,T;\B^{d/p-1}_{p,1})}\\
&\lesssim \alpha R.\end{aligned}$$
In order to bound the next terms, we shall use the fact that, 
owing to the decomposition of $\bb$ in~\eqref{def:bb}, 
the product and composition results in Proposition~\ref{p:product} and~\ref{p:compo},
and the local-in-time version of~\eqref{eq:J}, we have
for all smooth functions $k$ vanishing at $0$ and $t\in[0,T],$ 
$$\begin{aligned}
\|k(\bb(t))\|_{\B^{d/p}_{p,1}}&\lesssim \|\bb(t)\|_{\B^{d/p}_{p,1}}\\
&\lesssim \|a_0\|_{\B^{d/p}_{p,1}}
+(1+\|a_0\|_{\B^{d/p}_{p,1}}) \|J^{-1}_\bv(t)-1\|_{\B^{d/p}_{p,1}}\\
&\lesssim \|a_0\|_{\B^{d/p}_{p,1}}
+(1+\|a_0\|_{\B^{d/p}_{p,1}}) \int_0^t\|\nabla\bv\|_{\B^{d/p}_{p,1}}\,\d\tau\\
&\lesssim \alpha+R.\end{aligned}$$
To bound $\bh^2,$ we use the decomposition
$$\displaylines{\wt\mu(\bb)\adj(DX_\bv)\cdot D_{A_\bv}(\bv)-\bar\mu D(\bv)
=(\wt\mu(\bb)-\bar\mu)\adj(DX_\bv)\cdot D_{A_\bv}(\bv)\hfill\cr\hfill
+\bar\mu(\adj(DX_\bv)-\Id)\cdot D_{A_\bv}(\bv)
+\bar\mu(D_{A_\bv}(\bv)- D(\bv)).}$$
Hence, using the aforementioned results and also~\eqref{eq:deriv}, we find that for all $t\in[0,T],$
$$\displaylines{\quad\|\bh^2\|_{\B^{d/p-1}_{p,1}}\lesssim
\|\bb\|_{\B^{d/p}_{p,1}}\|\adj(DX_\bv)\cdot D_{A_\bv}(\bv)\|_{\B^{d/p}_{p,1}}
\hfill\cr\hfill+\|(\adj(DX_\bv)-\Id)\cdot D_{A_\bv}(\bv)\|_{\B^{d/p}_{p,1}}+\|D_{A_\bv}(\bv)- D(\bv)\|_{\B^{d/p}_{p,1}},\quad}$$
whence we have
$$\begin{aligned}\|\bh^2\|_{\L^1(0,T;\B^{d/p-1}_{p,1})}&\lesssim 
\|\bb\|_{\L^\infty(0,T;\B^{d/p}_{p,1})}\|\nabla\bv\|_{\L^1(0,T;\B^{d/p}_{p,1})}
+\|\nabla\bv\|_{\L^1(0,T;\B^{d/p}_{p,1})}^2\\
&\lesssim R\,(\alpha + R).
\end{aligned}$$
Bounding $\bh^3$ is clearly the same. Finally,
 to handle  $\bh^4$ (that is, the pressure term), we assume with no loss of 
generality that  $P(1)=0,$ and use the decomposition
$$\bh^4=(\Id-{}^T\!\adj(DX_\bv))\cdot \nabla(P(1+\bb))
- \nabla(P(1+\bb)).$$
Hence 
$$\begin{aligned}\|\bh^4\|_{\L^1(0,T;\B^{d/p-1}_{p,1})}
&\lesssim\bigl(1+\|\Id-{}^T\!\adj(DX_\bv)\|_{\L^\infty(0,T;\B^{d/p}_{p,1})}\bigr)
\|P(1+\bb)\|_{\L^1(0,T;\B^{d/p}_{p,1})}
\\
&\lesssim \bigl(1+\|\nabla\bv\|_{\L^1(0,T;\B^{d/p}_{p,1})}\bigr)
\|\bb\|_{\L^1(0,T;\B^{d/p}_{p,1})}\\
&\lesssim T (\alpha +R).\end{aligned}$$
Reverting to~\eqref{eq:wtu}, we end up with
$$\|\wt u\|_{\F_p(T)}\leq C(\alpha+R)(T+R).$$
Consequently, if one takes $R=\alpha$ and assumes, in addition to~\eqref{eq:DuL}, 
that $T\leq\alpha,$ we obtain $$\|\wt u\|_{\F_p(T)}\leq 4C\alpha^2.$$
One can thus conclude that $\Psi$  is a self-map on $\bar B_{\F_p(T)}(u_L,R)$ provided
$8C\alpha\leq1.$ 
\medbreak
To complete the proof of existence of a fixed point for $\Psi,$ one has to
exhibit  its properties of contraction. Consider $\bv_i \in \bar B_{\F_p(T)}(u_L,R)$ and $\bu_i:=\Psi \bv_i,$ $i=1,2,$ with $R$ and $T$ as above.
Then, according to Corollary~\ref{c:lame}, we have
$$
\|\du\|_{\F_p(T)} \lesssim  \|\dh\|_{\L^1(0,T;\B^{d/p-1}_{p,1})},$$
where $\du:=\bu_2-\bu_1$, and so on. We see
that $\du$ fulfills (where $\dg^2$ and $\dg^3$ have been defined 
just above~\eqref{eq:dg1}): 
\begin{multline}\label{eq:dul}\partial_t\du+{\bf L}\du= 
-a_0\partial_t \dv+\dh^2+\dh^3\\
-{}^T\!\adj(DX_1)\cdot\nabla(P(1+\bb_2)-P(1+\bb_1))-{}^T\!(\adj(DX_2)-\adj(DX_1))\cdot\nabla(P(1+\bb_2)).
\end{multline}
Then, one has to perform  always the same type of computations as just above and 
in the previous section. The details are  omitted. One ends up with 
$$\|\du\|_{\F_p (T)} \leq CR\|\dv\|_{\F_p (T)},$$
which, provided $CR<1,$ allows to complete the proof of a fixed point
for $\Psi,$ and thus of a solution for~\eqref{eq:CNSlag}, in the desired regularity space. 

Proving uniqueness is similar as for  the global existence theorem, 
except that we now use~\eqref{eq:dul} with $\bv=\bu$ instead of the full system for $(\ba,\bu).$
In particular, there is no need to assume that the velocity of one of the solutions is small. 
Again, the details are left to the reader. \end{proof}


\subsection{The case of large  variations of density} 

This part is devoted to the proof of Theorem~\ref{Thm:local} in full generality. 
The main issue is  to adapt Corollary~\ref{c:lame} to the following system:
\begin{equation}\label{eq:lamebis}\left\{\begin{aligned} 
\rho\partial_tu-2\divergence(\mu D(u))-\nabla(\lambda \divergence u)= f&\quad\hbox{in }\ (0,T)\times\Omega,\\
u|_{\partial\Omega}=0&\quad\hbox{on }\ (0,T)\times\partial\Omega,\\
u|_{t=0}=u_0&\quad\hbox{in }\ \Omega,\end{aligned}\right.\end{equation}
where $\rho=\rho(x),$ $\lambda=\lambda(x)$, and $\mu=\mu(x)$ 
are given functions in $\B^{d/p}_{p,1}(\Omega),$ 
such that 
\begin{equation}\label{eq:ellipticity}
\inf_{x\in\Omega}\rho(x)>0,\quad \inf_{x\in\Omega}\mu(x)>0,\andf \inf_{x\in\Omega} (\lambda+2\mu)(x)>0.\end{equation}
\begin{proposition}\label{c:lamebis} Let $T>0.$  Let $1<p<\infty$ and $-1+1/p<s<1/p$
with $s\leq d/p-1.$ 
Take $u_0$ in $\B^s_{p,1}(\Omega;\IR^d)$  and $f$  in $\L^1(0,T;\B^s_{p,1}(\Omega;\IR^d)).$ Assuming~\eqref{eq:ellipticity}, 
System~\eqref{eq:lamebis}  admits 
 a unique solution $u\in\cC_b([0,T];\B^s_{p,1}(\Omega;\IR^d))$ in the space
 \begin{align*}
  u \in \W^{1 , 1} (0 , T ; \B^s_{p , 1} (\Omega ; \IR^d)) \cap \L^1(0,T;\B^{s+2}_{p,1}(\Omega;\IR^d))
 \end{align*}
and there exists a constant $C > 0$ depending only on $\rho,$ $\lambda,$ $\mu,$ 
$p,$ $s$ and $\Omega,$
such that 
\begin{equation}\label{eq:lamebis1}\sup_{t\in[0,T]} \|u(t)\|_{\B^s_{p,1}}+\int_0^T \bigl(\|\partial_tu\|_{\B^s_{p,1}}
+\|u\|_{\B^{s+2}_{p,1}}\bigr)\, \d t\leq 
C\biggl( \|u_0\|_{\B^s_{p,1}}+\int_0^T\|f\|_{\B^s_{p,1}}\,\d t\biggr)\cdotp\end{equation}\end{proposition}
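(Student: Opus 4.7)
The plan is to reduce \eqref{eq:lamebis} to the constant-coefficient case of Corollary~\ref{c:lame} via a freezing-of-coefficients plus localization argument. The key ingredients are: the embedding $\B^{d/p}_{p,1}(\Omega)\hookrightarrow\cC(\overline\Omega)$, which makes $\rho,\mu,\lambda$ uniformly continuous on $\overline\Omega$ and, by \eqref{eq:ellipticity}, bounded away from zero; the stability of $\B^{d/p}_{p,1}$ under products (Proposition~\ref{p:product}) and under composition by smooth functions of a variable staying in a bounded interval (Proposition~\ref{p:compo}); and the uniform-in-$\mu'/\mu$ constant at the end of Corollary~\ref{c:lame}. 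The hypothesis $s\leq d/p-1$ is exactly what allows Proposition~\ref{p:product} to turn products of $\B^{d/p}_{p,1}$ coefficients with $\B^{s+k}_{p,1}$ functions for $k\in\{0,1,2\}$ back into $\B^{s+k}_{p,1}$.

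First, I would divide \eqref{eq:lamebis} by $\rho$ and expand the two divergences to put the equation into the Lam\'e form
$$\partial_t u-\wt\mu\,\Delta u-\wt{\mu'}\,\nabla\divergence u=\wt f+\mathcal{R}(u),$$
with $\wt\mu:=\mu/\rho$, $\wt{\mu'}:=(\mu+\lambda)/\rho$, $\wt f:=f/\rho$, and $\mathcal{R}(u)$ collecting all first-order terms in $u$ with $\B^{d/p}_{p,1}$ coefficients (coming from $\rho^{-1}(\nabla\mu)\cdot D(u)$ and $\rho^{-1}(\nabla\lambda)\divergence u$), so that $\|\mathcal{R}(u)\|_{\B^s_{p,1}}\lesssim\|u\|_{\B^{s+1}_{p,1}}$. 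By Proposition~\ref{p:compo} and \eqref{eq:ellipticity}, $\wt\mu$ and $\wt{\mu'}$ lie in $\B^{d/p}_{p,1}(\Omega)$ and satisfy $\wt\mu>0$ and $\wt\mu+\wt{\mu'}>0$ uniformly, the ratio $\wt{\mu'}/\wt\mu=1+\lambda/\mu$ being confined to a compact subset of $(-1,\infty)$. Then, fixing $\eta>0$ small, cover $\overline\Omega$ by finitely many open sets $V_1,\dots,V_N$ of diameter so small that the oscillation of $\wt\mu$ and $\wt{\mu'}$ on each $V_k$ is less than $\eta$, pick $x_k\in V_k$, and take a subordinate smooth partition of unity $\{\chi_k\}$; set $u_k:=\chi_k u$. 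Each $u_k$ vanishes on $\partial\Omega$ and satisfies
$$\partial_t u_k-\wt\mu(x_k)\Delta u_k-\wt{\mu'}(x_k)\nabla\divergence u_k=\chi_k(\wt f+\mathcal{R}(u))+E_k(u)+C_k(u),$$
where $E_k(u):=\chi_k\bigl((\wt\mu-\wt\mu(x_k))\Delta u+(\wt{\mu'}-\wt{\mu'}(x_k))\nabla\divergence u\bigr)$ is bounded in $\B^s_{p,1}$ by $C\eta\|u\|_{\B^{s+2}_{p,1}}$ by Proposition~\ref{p:product}, and $C_k(u)$ is the commutator with $\chi_k$, a first-order operator in $u$ with smooth coefficients.

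Applying Corollary~\ref{c:lame} to each $u_k$ with constants $\wt\mu(x_k),\wt{\mu'}(x_k)$---using the uniform-in-$\mu'/\mu$ version---and summing over $k$, I obtain, after choosing $\eta$ small enough that the $\sum_k E_k(u)$ contribution is absorbed on the left,
$$\|u\|_{M_T}\leq C\bigl(\|u_0\|_{\B^s_{p,1}}+\|f\|_{\L^1(0,T;\B^s_{p,1})}\bigr)+C\|u\|_{\L^1(0,T;\B^{s+1}_{p,1})},$$
where $\|\cdot\|_{M_T}$ denotes the full maximal-regularity norm on the left of \eqref{eq:lamebis1}. The last term is killed by the interpolation $\|u\|_{\B^{s+1}_{p,1}}\leq\varepsilon\|u\|_{\B^{s+2}_{p,1}}+C_\varepsilon\|u\|_{\B^s_{p,1}}$ together with a Gronwall argument on $t\mapsto\|u(t)\|_{\B^s_{p,1}}$, producing \eqref{eq:lamebis1}. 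Existence then follows by the continuity method along the family $L_\tau:=(1-\tau)(-\Delta-\nabla\divergence)+\tau(-\wt\mu\Delta-\wt{\mu'}\nabla\divergence)$, $\tau\in[0,1]$, all of whose members obey the same uniform a priori bound and whose endpoint $\tau=0$ is solvable by Corollary~\ref{c:lame}; uniqueness is \eqref{eq:lamebis1} applied to the difference of two solutions.

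The main obstacle is the tension between the two remainders produced by localization: the second-order error $E_k(u)$ can be absorbed only after making $\eta$ small, which forces $\{V_k\}$ to have small diameter, hence $N$ large and the cutoffs $\chi_k$ to oscillate strongly, so that the commutator norms $\|C_k(u)\|_{\B^s_{p,1}}$ are not small. What saves the argument is that $C_k(u)$ is only first-order in $u$, so the interpolation inequality above lets us trade it for an arbitrarily small multiple of the top-order norm plus a lower-order norm controlled by Gronwall---no smallness of the commutator itself is needed. The other delicate point is the uniformity of the constant in Corollary~\ref{c:lame} across the finite family of frozen ratios $\wt{\mu'}(x_k)/\wt\mu(x_k)$, which is guaranteed by the last sentence of that corollary once one observes that, by \eqref{eq:ellipticity} and continuity, these ratios lie in a compact subset of $(-1,\infty)$.
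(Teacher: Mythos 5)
Your overall strategy mirrors the paper's — localize, freeze coefficients, absorb the second-order error by shrinking the localization scale, handle the first-order commutators by interpolation plus Gronwall, existence by the continuity method — and you correctly identify the key tension between the two remainders and its resolution. But two steps do not go through as written.

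First, you divide by $\rho$ and expand the divergences \emph{before} localizing, producing first-order remainders such as $\rho^{-1}\nabla\mu\cdot D(u)$. Contrary to what you state, their coefficients $\nabla\mu,\nabla\lambda$ lie only in $\B^{d/p-1}_{p,1}(\Omega)$ (since $\mu,\lambda\in\B^{d/p}_{p,1}(\Omega)$), so the bound $\|\mathcal{R}(u)\|_{\B^s_{p,1}}\lesssim\|u\|_{\B^{s+1}_{p,1}}$ rests on a bilinear estimate $\B^{d/p-1}_{p,1}\times\B^{s+1}_{p,1}\to\B^s_{p,1}$ that is \emph{not} Proposition~\ref{p:product}, which requires one of the two factors to be in $\B^{d/p}_{p,1}$. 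This works at the endpoint $s=d/p-1$ (then $D(u)\in\B^{d/p}_{p,1}$), but for $s<d/p-1$ and $p>d+1$ the remainder of the Bony decomposition no longer lands in $\B^s_{p,1}$, so the estimate breaks down within the stated range of parameters. The paper's proof avoids this entirely by localizing first and keeping the operator in divergence form: the corresponding error term reads $\divergence\bigl(\phi_k(\mu-\mu_k)D(u)\bigr)$, with $\mu,\lambda$ never differentiated, the inner product is bounded by Proposition~\ref{p:product} in $\B^{s+1}_{p,1}$, and the outer divergence drops one derivative.

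Second, your $\eta$ controls only the $\L^\infty$-oscillation of $\wt\mu,\wt{\mu'}$ on $V_k$, whereas the quantity produced by Proposition~\ref{p:product} when estimating $E_k(u)$ is $\|\chi_k(\wt\mu-\wt\mu(x_k))\|_{\B^{d/p}_{p,1}}$. Small $\L^\infty$-oscillation on $\supp\chi_k$ does \emph{not} imply smallness of this Besov norm: the cutoffs have $\B^{d/p}_{p,1}$-norm of order one while their gradients scale like $\delta^{-1}$. The paper establishes the required uniform-in-$x_k$ smallness in a dedicated proposition at the end of the Appendix (used to obtain \eqref{eq:unifB}), combining density of smooth functions with translation-dilation invariance of the $\B^{d/p}_{p,1}(\IR^d)$-norm. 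Without that lemma your absorption of $E_k(u)$ is unjustified.
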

\begin{proof}
The key idea is that  the embedding $\B^{d/p}_{p,1}(\Omega)\hookrightarrow\cC(\overline{\Omega})$
implies that the coefficients
of System~\eqref{eq:lamebis} are uniformly continuous on $\Omega,$ hence have
small variations on small  balls, so that one can take advantage of
Corollary~\ref{c:lame}, after localization of the system. 
\medbreak
To start with, as in~\cite{Danchin}, we introduce 
a covering $(B_k)_{1\leq k\leq K}$ of $\overline\Omega$ by balls of radius $\delta \in (0 , 1)$
and center $x_k \in \Omega,$ 
with finite multiplicity (independent of $\delta$),
and a partition of unity $(\phi_k)_{1\leq k\leq K}$ of smooth functions on $\IR^d$ such that:
\begin{itemize}
\item $\sum_{k=1}^K\phi_k\equiv1$ in $\Omega$;
\smallbreak
\item $\|\nabla^\alpha\phi_k\|_{\L^\infty(\IR^d)}\leq C_\alpha \delta^{-\alpha},\ \alpha\in\IN$;
\smallbreak\item  the support of $\phi_k$ is included in $B_k.$ 
\end{itemize}
This covering may be constructed from a smooth  function $\theta$ supported in the unit ball, such that
$$\sum_{k\in\IZ^d} \theta(x-k)=1\quad\hbox{on}\quad \IR^d.$$
It is just a matter of setting $\phi_k(x):=\theta(\delta^{-1}(x-\delta k))$ with $x_k=\delta k,$
then relabelling the family $(\phi_k),$ keeping only indices for which  ${\rm Supp}\, \phi_k\cap\Omega$ is nonempty. 
Clearly,  combining the bounds of $\nabla^\alpha\phi_k$ with the fact that 
 ${\rm Supp}\, \phi_k\subset B_k$ ensures that
 $$ \|\nabla^\alpha\phi_k\|_{\L^p(\IR^d)}\leq C'_\alpha \delta^{\frac dp-\alpha},\quad\alpha\in\IN$$ and thus,  by interpolation, 
 \begin{equation}\label{eq:boundphik}
 \|\phi_k\|_{\B^{d/p}_{p,1}(\IR^d)}\leq C \andf \|\nabla\phi_k\|_{\B^{d/p}_{p,1}(\IR^d)}\leq C\delta^{-1}. 
\end{equation}
We also need another two families  $(\check\phi_k)_{1\leq k\leq K}$ and   $(\wt\phi_k)_{1\leq k\leq K}$
such that  $\check\phi_k\equiv 1$ on 
 the support of $\phi_k$ and   $\wt\phi_k\equiv 1$ on 
 the support of $\check\phi_k,$  with $\check\phi_k$ and $\wt\phi_k$  supported in slightly larger balls  than $\phi_k,$
 and such that $\|\nabla^\alpha\check\phi_k\|_{\L^\infty}\leq C_\alpha \delta^{-\alpha}$
  and $\|\nabla^\alpha\wt\phi_k\|_{\L^\infty}\leq C_\alpha \delta^{-\alpha}$  hold.
\medbreak
Let   $\rho_k:=\rho(x_k),$ $u_k:=u\phi_k,$ $f_k= \rho_k f ,$
 $\lambda_k=\lambda(x_k)$, and $\mu_k=\mu(x_k).$ 
Then, we observe that $u_k$ satisfies:
\begin{equation}\label{eq:lamek}\left\{\begin{aligned} 
\rho_k\partial_tu_k- \mu_k\Delta u_k -(\lambda_k+\mu_k)\nabla \divergence u_k= F_k
&\quad\hbox{in }\ (0,T)\times\Omega,\\
u_k|_{\partial\Omega}=0&\quad\hbox{on }\ (0,T)\times\partial\Omega,\\
u_k|_{t=0}=u_{0,k}&\quad\hbox{in }\ \Omega,\end{aligned}\right.\end{equation}
with $u_{k , 0} := u_0 \phi_k$ and
$$\displaylines{F_k:= f_k+(\rho_k-\rho)\partial_t u_k+
2\divergence\bigl(\phi_k(\mu-\mu_k) D(u)\bigr) + \nabla\bigl(\phi_k(\lambda-\lambda_k)\divergence u\bigr)
\hfill\cr\hfill-2\mu D(u)\cdot\nabla\phi_k -\lambda \divergence u \,\nabla\phi_k
-\mu_k\divergence\bigl(u\otimes\nabla\phi_k +\nabla\phi_k\otimes u\bigr)
-\lambda_k\nabla(u\cdot\nabla\phi_k).}$$
Therefore, in light of Corollary~\ref{c:lame} and denoting $\wt\mu_k:=\mu_k/\rho_k,$ we have for all $t\in[0,T],$
\begin{equation}\label{eq:V1}
\|u_k(t)\|_{\B^{s}_{p,1}} +\int_0^t \bigl(\|\partial_t  u_k\|_{\B^{s}_{p,1}}
+ \wt\mu_k\|u_k\|_{\B^{s+2}_{p,1}}\bigr)\,\d\tau
\leq C\biggl(\|u_k(0)\|_{\B^{s}_{p,1}} +\rho_k^{-1}\int_0^t \|F_k\|_{\B^{s}_{p,1}}\,\d\tau\biggr)\cdotp
\end{equation}
Note that our ellipticity condition \eqref{eq:ellipticity} ensures that 
 the constant $C$  is independent of $k.$
\medbreak
Throughout, we fix some $\eps>0$ and take $\delta$ so that
for all $k\in\{1,\cdots,K\},$ 
\begin{equation}\label{eq:unifcont}
\max\bigl(\|1 - \rho/\rho_k\|_{\L^\infty(B_k)}, 
\mu_k^{-1}\|\mu-\mu_k\|_{\L^\infty(B_k)}, \mu_k^{-1}\|\lambda-\lambda_k\|_{\L^\infty(B_k)}\bigr)
\leq\eps.
\end{equation}
Actually, as we  have to perform estimates in Besov spaces, we need a stronger property, namely
\begin{equation}\label{eq:unifB}
\max\bigl(\|\wt\phi_k(1 - \rho/\rho_k)\|_{\B^{d/p}_{p,1}(\Omega)}, 
\mu_k^{-1}\|\wt\phi_k(\mu-\mu_k)\|_{\B^{d/p}_{p,1}(\Omega)},
 \mu_k^{-1}\|\wt\phi_k(\lambda-\lambda_k)\|_{\B^{d/p}_{p,1}(\Omega)}\bigr)
\leq\eps,
\end{equation}
which is proved at the end of the Appendix
\medbreak
Let us now estimate all the terms of $F_k.$ We have thanks to Proposition~\ref{p:product} and~\eqref{eq:unifB},
$$\begin{aligned}
\|(\rho_k-\rho)\partial_tu_k\|_{\B^{s}_{p,1}(\Omega)}&\leq C
\|\wt\phi_k(\rho_k-\rho)\|_{\B^{d/p}_{p,1}(\Omega)}
\|\partial_tu_k\|_{\B^{s}_{p,1}(\Omega)}\\&\leq C\eps\rho_k \|\partial_tu_k\|_{\B^{s}_{p,1}(\Omega)},
\end{aligned}
$$
and, using also~\eqref{eq:boundphik},  with the notation $\wt u_k:=\wt\phi_k u,$ 
$$
\begin{aligned}
\|\divergence\bigl(\phi_k(\mu-\mu_k) D(u)\bigr)\|_{\B^{s}_{p,1}(\Omega)}&\lesssim
\|\wt\phi_k(\mu-\mu_k)\|_{\B^{d/p}_{p,1}(\Omega)}
\|\phi_k\nabla u\|_{\B^{s+1}_{p,1}(\Omega)}\\
&\leq C\eps\mu_k\bigl( \|\nabla (\phi_k  u)\|_{\B^{s+1}_{p,1}(\Omega)} +  \| \nabla \phi_k\otimes \wt\phi_ku\|_{\B^{s+1}_{p,1}(\Omega)}\bigr)\\
&\leq C\eps\mu_k\bigl( \|u_k \|_{\B^{s+2}_{p,1}(\Omega)} +  \delta^{-1}\| \wt u_k\|_{\B^{s+1}_{p,1}(\Omega)}\bigr)\cdotp
\end{aligned}$$
The next term may be estimated in the same way.
 In order to estimate the term $\mu D(u)\cdot\nabla\phi_k,$ let us set  $\check u_k:=\check\phi_k u.$
 Applying Proposition~\ref{p:product} and  \eqref{eq:boundphik} yields
$$\begin{aligned}
\|\mu D(u)\cdot\nabla\phi_k\|_{\B^{s}_{p,1}(\Omega)}&\lesssim 
\|\mu\|_{\B^{d/p}_{p,1}(\Omega)} \|D(u)\cdot\nabla\phi_k\|_{\B^{s}_{p,1}(\Omega)}\\
&\lesssim \|\mu\|_{\B^{d/p}_{p,1}(\Omega)}\|\nabla\phi_k\|_{\B^{d/p}_{p,1}(\Omega)}
 \|\check\phi_k D(u)\|_{\B^{s}_{p,1}(\Omega)}\\
 &\lesssim \delta^{-1}\|\mu\|_{\B^{d/p}_{p,1}(\Omega)}
\bigl(\|\nabla(\check\phi_k u)\|_{\B^{s}_{p,1}(\Omega)}+    \|\wt\phi_k u\otimes \nabla \check\phi_k \|_{\B^{s}_{p,1}(\Omega)}\bigr)\\
&\lesssim  \delta^{-1}\|\mu\|_{\B^{d/p}_{p,1}(\Omega)}\bigl(\|\check u_k\|_{\B^{s+1}_{p,1}(\Omega)}+\delta^{-1}
\|\wt u_k\|_{\B^{s}_{p,1}(\Omega)}\bigr)\cdotp\end{aligned}$$
A similar estimate holds for $\lambda \divergence u \,\nabla\phi_k.$
Finally,
$$\begin{aligned}
\|\nabla(u\cdot\nabla\phi_k)\|_{\B^{s}_{p,1}(\Omega)}&\lesssim 
\|u\wt\phi_k\cdot\nabla\phi_k\|_{\B^{s+1}_{p,1}(\Omega)}\\
&\lesssim  \|\nabla\phi_k\|_{\B^{d/p}_{p,1}(\Omega)}
\| \wt u_k\|_{\B^{s+1}_{p,1}(\Omega)}\\
&\lesssim \delta^{-1}\|\wt u_k\|_{\B^{s+1}_{p,1}(\Omega)},
\end{aligned}$$
and the same holds for $\divergence\bigl(u\otimes\nabla\phi_k +\nabla\phi_k\otimes u\bigr)\cdotp$
 \medbreak
 Let us denote $\zeta^*:= 1+\|\lambda/\mu\|_{\L^\infty}.$ Then, atogether, reverting to~\eqref{eq:V1} and 
 assuming that $\eps$ has been chosen small enough (so 
 as to absorb the terms with  $\partial_tu_k$ and $\|u_k\|_{\B^{s+2}_{p,1}(\Omega)}$),  we end up  
  for all $k\in\{1,\cdots,K\}$ with
 \begin{multline}\label{eq:V2}
\|u_k(t)\|_{\B^{s}_{p,1}} +\int_0^t \bigl(\|\partial_t  u_k\|_{\B^{s}_{p,1}}
+ \wt\mu_k\|u_k\|_{\B^{s+2}_{p,1}}\bigr)\,\d\tau
\leq C\biggl(\|u_k(0)\|_{\B^{s}_{p,1}} +\int_0^t\|f_k\|_{\B^s_{p,1}}\,\d\tau\\ +
\wt\mu_k\delta^{-1}
\int_0^t \bigl(\zeta^*\|\wt u_k\|_{\B^{s+1}_{p,1}}+\mu_k^{-1}\|(\lambda,\mu)\|_{\B^{d/p}_{p,1}}\|\check u_k\|_{\B^{s+1}_{p,1}}
\bigr)\d\tau +  \rho_k^{-1}\delta^{-2}\int_0^t\|(\lambda,\mu)\|_{\B^{d/p}_{p,1}}\|\wt  u_k\|_{\B^{s}_{p,1}}\,\d\tau\biggr)\cdotp
\end{multline}
Let us introduce the notation:
$$\|z\|_{\B^{s,\psi}_{p,1}}:= \sum_{k=1}^K \|\psi_k z\|_{\B^{s}_{p,1}(\Omega)}\quad\hbox{for }\ \psi\in\{\phi,\check\phi,\wt\phi\}.$$
Then, summing up on $k\in\{1,\cdots,K\}$ in~\eqref{eq:V2}
and denoting $\wt\mu_*:=\inf_\Omega \mu/\rho,$ 
$\wt\mu^*:=\sup_\Omega \mu/\rho$ and $\rho_* :=\inf_\Omega \rho,$ we conclude that
 \begin{multline}\label{eq:V3}\|u(t)\|_{\B^{s,\phi}_{p,1}} +\int_0^t \bigl(\|\partial_t  u\|_{\B^{s,\phi}_{p,1}}
+ \wt\mu_*\|u\|_{\B^{s+2,\phi}_{p,1}}\bigr)\,\d\tau
\leq C\biggl(\|u_0\|_{\B^{s,\phi}_{p,1}} +\int_0^t\|f\|_{\B^{s,\phi}_{p,1}}\,\d\tau\\
+\delta^{-1}\wt\mu^*\zeta^*\int_0^t\|u\|_{\B^{s+1,\wt\phi}_{p,1}}\,\d\tau
+\delta^{-1}\rho_*^{-1}\int_0^t\|(\lambda,\mu)\|_{\B^{d/p}_{p,1}}
\bigl(\|u\|_{\B^{s+1,\check\phi}_{p,1}}
+\delta^{-1}\|u\|_{\B^{s,\wt\phi}_{p,1}}\bigr)\,\d\tau\biggr)\cdotp
\end{multline}
Since the properties of the support of the families $(\wt\phi_k)$ and $(\check\phi_k)$ guarantee that
$$\wt\phi_k=\sum_{k'\sim k} \wt\phi_k \phi_{k'}\andf \check\phi_k=\sum_{k'\sim k} \check\phi_k \phi_{k'},$$
we may write   for all  $-\min(d/p,d/p')<\sigma\leq d/p,$ owsing to Proposition \ref{p:product}
and  Inequality \eqref{eq:boundphik}, 
$$\|\wt u_k\|_{\B^\sigma_{p,1}}\leq  C \sum_{k'\sim k} \|\wt\phi_k\|_{\B^{d/p}_{p,1}}\| u_{k'}\|_{\B^\sigma_{p,1}}\leq  C
 \sum_{k'\sim k} \| u_{k'}\|_{\B^\sigma_{p,1}}.$$
 A similar property is true for $\check u_k.$ Hence 
 $$ \|u\|_{\B^{\sigma,\wt\phi}_{p,1}}\lesssim \|u\|_{\B^{\sigma,\phi}_{p,1}}\andf \|u\|_{\B^{\sigma,\check\phi}_{p,1}}\lesssim \|u\|_{\B^{\sigma,\phi}_{p,1}}.$$
 This means that $\wt\phi$ and $\check\phi$ may be replaced by $\phi$ in the right-hand side of \eqref{eq:V3}
 (up to a change of $C$ of course). 
 Now, the terms of \eqref{eq:V3} involving the index $s+1$ may be bounded by interpolation as follows  for all $A>0$ and $\eps>0$ :
 $$\begin{aligned}
 A\|u\|_{\B^{s+1,\phi}_{p,1}}&\leq C\sum_k A\|u_k\|_{\B^s_{p,1}}^{1/2}\|u_k\|_{\B^{s+2}_{p,1}}^{1/2}\\
 &\leq \eps\wt\mu_*\sum_k\|u_k\|_{\B^{s+2}_{p,1}} + C\eps^{-1}\wt\mu_*^{-1}A^2\sum_k \|u_k\|_{\B^s_{p,1}}\\
 &=\eps\wt\mu_*\|u\|_{\B^{s+2,\phi}_{p,1}} + C\eps^{-1}\wt\mu_*^{-1}A^2\|u\|_{\B^{s,\phi}_{p,1}},
 \end{aligned}$$
with $C$ independent of $A$ and $\eps.$
Hence, taking either $A=C\delta^{-1}\wt\mu^*\zeta^*$ 
or $A=C\delta^{-1}\rho_*^{-1}\|(\lambda,\mu)\|_{\B^{d/p}_{p,1}},$
 Inequality \eqref{eq:V3} entails (observing that the last term of it
 can be dominated by the other ones resulting from 
 the computations just above),  
$$\displaylines{
\|u(t)\|_{\B^{s,\phi}_{p,1}} +\int_0^t \bigl(\|\partial_t  u\|_{\B^{s,\phi}_{p,1}}
+ \wt\mu_*\|u\|_{\B^{s+2,\phi}_{p,1}}\bigr)\,\d\tau
\leq C\biggl(\|u_0\|_{\B^{s,\phi}_{p,1}} +\int_0^t\|f\|_{\B^{s,\phi}_{p,1}}\,\d\tau\hfill\cr\hfill
+\rho_*^{-1}\wt \mu_*^{-1}\delta^{-2}\bigl(\rho_*(\zeta^*)^2 (\wt\mu^*)^2
+\rho_*^{-1}\|(\lambda,\mu)\|_{\B^{d/p}_{p,1}}^2\bigr)
\int_0^t\|u\|_{\B^{s,\phi}_{p,1}}\,\d\tau\biggr)\cdotp}$$
Since,  $\rho_*\wt\mu^*\leq \mu^*,$ and thus $\rho_*\wt\mu^*\lesssim \|\mu\|_{\B^{d/p}_{p,1}},$  applying Gronwall lemma eventually leads to 
 \begin{multline}\label{eq:V4}\|u(t)\|_{\B^{s,\phi}_{p,1}} +\int_0^t \bigl(\|\partial_t  u\|_{\B^{s,\phi}_{p,1}}
+ \wt\mu_*\|u\|_{\B^{s+2,\phi}_{p,1}}\bigr)\,\d\tau\\
\leq C\biggl(\|u_0\|_{\B^{s,\phi}_{p,1}} +\int_0^t \|f\|_{\B^{s,\phi}_{p,1}}\,\d\tau\biggr)
\exp\Bigl(C\wt\mu_*^{-1}\delta^{-2}\rho_*^{-2}(\zeta^*)^2 \|(\lambda,\mu)\|_{\B^{d/p}_{p,1}}^2t\Bigr)\cdotp\end{multline}
Since the covering is finite,  the norms $\|\cdot\|_{\B^{\sigma,\phi}_{p,1}}$ are actually equivalent to the
Besov norms  $\|\cdot\|_{\B^{\sigma}_{p,1}(\Omega)}$ (with bounds depending on $K$ of course), which eventually ensures
the desired inequality~\eqref{eq:lamebis1}. 
\medbreak
In order to prove the existence of a solution to~\eqref{eq:lamebis}
in the space $\F^s_p(T)$ corresponding to the statement of Proposition~\ref{c:lamebis}, 
one may adapt the  continuity method used in~\cite[Thm.~2.2]{Danchin}. 

For all $\theta\in[0,1],$ we define the linear operator $\cL_\theta$ acting on time-dependent vector fields $u$ by:
$$
\cL_\theta u:=\rho_\theta\partial_tu-2\divergence(\mu_\theta D(u))-\nabla(\lambda_\theta\divergence u) $$
with $\rho_\theta:=(1-\theta) +\theta\rho,$ $\mu_\theta:=1-\theta + \theta \mu$ and $\lambda_\theta:=\theta\lambda.$
Note that the ellipticity condition \eqref{eq:ellipticity} is ensured uniformly for $\theta\in[0,1]$ and that 
the value of $\delta$ and of $C$ may be chosen independent of $\theta$ in Inequality \eqref{eq:V4}
(hence Inequality \eqref{eq:lamebis1}  corresponding to System \eqref{eq:lamebis} 
with coefficients $\rho_\theta,$ $\mu_\theta$ and $\lambda_\theta$ is uniform with respect to $\theta$ as well).  

We denote by $\cE$ the set of parameters $\theta\in[0,1]$ such that  for all data $u_0$ and $f$ satisfying the hypotheses
of Proposition \ref{c:lamebis},  System \eqref{eq:lamebis} 
with coefficients $\rho_\theta,$ $\mu_\theta$ and $\lambda_\theta$ has  a solution in $\F^s_p(T).$
Corollary \ref{c:lame} guarantees that  $0$ is in $\cE.$ 
Now consider any $\theta_0\in \cE$ and data $u_0,$ $f.$  
Solving 
$$\cL_\theta u=f,\qquad u|_{\partial\Omega}=0,\qquad u|_{t=0}=u_0$$  in $\F_p^s(T)$  amounts to finding 
a fixed point  in $\F_p^s(T)$ for the map $\Phi: v\mapsto u$ such that $u$ is a solution in  $\F_p^s(T)$ of 
\begin{equation}\label{eq:cL}\cL_{\theta_0} u = f+(\cL_{\theta_0}-\cL_\theta)v,\qquad u|_{\partial\Omega}=0,\qquad u|_{t=0}=u_0.\end{equation}
Obviously, we have
$$(\cL_{\theta_0}-\cL_\theta)v=(\theta-\theta_0)\Bigl( (1-\rho)\partial_tv
+2\divergence\bigl((\mu-1) D(v)\bigr)+\divergence\bigl(\lambda \divergence v\bigr)\Bigr)\cdotp$$
Hence, using Proposition \ref{p:product} eventually leads to 
$$ \|(\cL_{\theta_0}-\cL_\theta)v \|_{\B^s_{p,1}}\leq C|\theta-\theta_0|\bigl(\|\partial_tv\|_{\B^s_{p,1}}+\|v\|_{\B^{s+2}_{p,1}}\bigr)\cdotp$$
The constant $C$ depends of course of $\rho,$ $\lambda$ and $\mu,$ but is independent of $\theta$ and $\theta_0.$ 
Now, since $\theta_0\in\cE,$   equation \eqref{eq:cL} is solvable in $\F_p^s(T)$ and
estimate \eqref{eq:lamebis1} combined with the above computation gives us  
$$\begin{aligned}
\|\Phi(v)\|_{\F^s_p(T)} &\leq C\biggl(\|u_0\|_{\B^s_{p,1}}+\int_0^T \|f\|_{\B^s_{p,1}}\,dt+\int_0^T \|(\cL_{\theta_0}-\cL_\theta)v \|_{\B^s_{p,1}}\,\d t\biggr)\\
&\leq C\bigl(\|u_0\|_{\B^s_{p,1}}+|\theta-\theta_0| \|v\|_{\F^s_p(T)}\bigr)\cdotp\end{aligned}$$
The same computation leads, for all pair $(v_1,v_2)$ in $\F^s_p(T)$ to 
$$\|\Phi(v_2)-\Phi(v_1)\|_{\F^s_p(T)}  \leq C\,|\theta-\theta_0| \, \|v_2-v_1\|_{\F^s_p(T)}.$$
Hence, setting $\eps=1/2C,$ one can conclude by the contracting mapping argument that
$\Phi$ admits a fixed  point $u$ in $\F^s_p(T)$  whenever $|\theta-\theta_0|\leq\eps.$ 
Since $\eps$ is independent of $\theta_0,$ we deduce that $1$ is in the set $\cE,$ which completes the proof of existence.\end{proof}

\begin{proof}[Proof of Theorem ~\ref{Thm:local}] As in the previous parts, we shall rather prove
the result in Lagrangian coordinates.
Having  Proposition~\ref{c:lamebis} at hand, it  suffices to  modify the fixed 
point map $\Psi$ introduced a couple of pages ago accordingly. 
More precisely, we observe that we want the Lagrangian velocity $\bu$ to satisfy
$$\left\{\begin{aligned}
&{\bf L}_{\rho_0}\bu
= 2\divergence\bigl(\mu(\bar\rho_\bu)\adj(DX_\bu)\cdot D_{A_\bu}(\bu)-\mu_0 D(\bu)\bigr)
+\nabla\bigl(\lambda(\rho_\bu)\divergence_{A_\bu}\bu-\lambda_0\divergence\bu\bigr)\\&\hspace{10cm}-{}^T\!\adj(DX_\bu)\cdot\nabla(P(\bar\rho_\bu)),\\
&\bu|_{\partial\Omega}=0,\\
&\bu|_{t=0}=u_0\end{aligned}\right.
$$
with  $\lambda_0:=\lambda(\rho_0),$ $\mu_0:=\mu(\rho_0),$ ${\bf L}_{\rho_0}\bu:=\rho_0\partial_t\bu -2\divergence(\mu_0 D(\bu))-\nabla(\lambda_0\divergence\bu)$ and  $\bar\rho_\bu:=\rho_0 J^{-1}_\bu.$
\medbreak
Define $\Psi: \F_p(T)\to\F_p(T)$ to be the map $\bv\mapsto \bu$ with $\bu$ the solution in $\F_p(T)$ 
provided by Proposition~\ref{c:lamebis} that corresponds to the right-hand side of the above system 
with $\bv$ instead of $\bu.$ 
Denote by $u_L^{\rho_0}$ the solution to ${\bf L}_{\rho_0}u=0$ with initial data $u_0$ given  by Proposition~\ref{c:lamebis}. 

Then, by following the proof of Proposition~\ref{p:local}, it is not difficult to check that $\Psi$ satisfies the 
conditions of the contraction mapping theorem  on some ball $\bar B_{\F_p(T)}(u_L^{\rho_0},R)$ 
provided $R$ and $T$ are small enough.  In fact, the main changes are that 
the term corresponding to $\bar h^1$ is no longer present (hence we do not need to assume $\rho_0$ to be close to some constant)
and that one has to bound in $\B^{d/p}_{p,1}$ terms like $\mu(\bar\rho_\bv)-\mu_0.$ However, 
owing to Propositions \ref{p:product} and  \ref{p:compo}, and to Inequality \eqref{eq:J}, we may write
$$\begin{aligned}
\| \mu(\bar\rho_\bv(t))-\mu_0\|_{\B^{d/p}_{p,1}}&\lesssim \|\bar\rho_\bv(t)-\rho_0\|_{\B^{d/p}_{p,1}} \\
&\lesssim \|\rho_0\|_{\B^{d/p}_{p,1}}  \|J_\bv^{-1}(t)-1\|_{\B^{d/p}_{p,1}}\\
& \lesssim \|\rho_0\|_{\B^{d/p}_{p,1}}\int_0^t\|D\bv\|_{\B^{d/p}_{p,1}}\,\d\tau
\end{aligned}$$
hence the proof may be easily completed. The details are left to the reader.   
\end{proof}


\appendix 
\begin{appendix}
\section{Results on the Lam\'e operator}
As a first, for the convenience of the reader, we recall the proof of  regularity estimates 
in Sobolev spaces for the Lam\'e operator. 
\begin{proof}[Proof of Proposition~\ref{Prop: Regularity for reduced operator on Lp}]  The first step is to prove that there exists  a constant $C > 0$ such that all solutions 
$u \in \W^{k + 2 , p} (\Omega ; \IC^d)$ to the equation
\begin{align*}
\left\{ \begin{aligned}
 - \mu \Delta u - z \nabla \divergence u &= f && \text{in } \Omega \\
 u &= 0 && \text{on } \partial \Omega
\end{aligned} \right.
\end{align*}
for some $f \in \W^{k , p} (\Omega ; \IC^d)$ satisfy
\begin{equation}\label{eq: Application of ADN}
 \| u \|_{\W^{k + 2 , p} (\Omega ; \IC^d)} \leq C \big( \| f \|_{\W^{k , p} (\Omega ; \IC^d)} + \| u \|_{\L^p (\Omega ; \IC^d)} \big)\cdotp
\end{equation}
In dimension $d=1,$ the result readily follows by integration. 
In the multi-dimensional case, it is a consequence of the theory of Agmon, Douglis, and Nirenberg
 (more precisely~\cite[Thm.~10.5]{Agmon_Douglis_Nirenberg}). 
 To verify the assumptions therein, define the symbol of $L$ by
\begin{align*}
 \cS (\xi) := \mu |\xi|^2 \Id + z \xi \otimes \xi \qquad (\xi \in \IR^d).
\end{align*}
\begin{lemma}
\label{Lem: ADN ellipticity}
Let $\mu > 0$ and $z \in \IC$ with $\mu + \Re(z) > 0$. Let $\delta \in (0 , 1)$ be any number that satisfies $\delta \mu  + \Re(z) \geq 0$. Then, for each $\xi \in \IR^d$ the determinant of $\cS (\xi)$ satisfies
\begin{align*}
 \mu^d (1 - \delta)^d 2^{- \frac{d}{2}} \lvert \xi \rvert^{2 d} \leq \lvert \det(\cS (\xi)) \rvert \leq \big( \mu + \lvert z \rvert \big)^d \lvert \xi \rvert^{2d}.
\end{align*}
\end{lemma}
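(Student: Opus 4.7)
The plan is to exploit the rank-one perturbation structure of $\cS(\xi) = \mu|\xi|^2 \Id + z\, \xi \otimes \xi$, whose spectrum is immediate by inspection. First, I dispose of $\xi = 0$, where both sides of the claimed inequalities vanish. For $\xi \neq 0$, the line $\spann(\xi)$ is invariant, since $\cS(\xi)\xi = \mu|\xi|^2 \xi + z|\xi|^2 \xi = (\mu + z)|\xi|^2 \xi$, and so is the orthogonal complement $\xi^\perp$, on which $\cS(\xi)$ acts as $\mu|\xi|^2$ times the identity. Thus $\cS(\xi)$ is diagonalizable with eigenvalue $\mu|\xi|^2$ of multiplicity $d-1$ and eigenvalue $(\mu+z)|\xi|^2$ of multiplicity one, which yields the exact formula
\[
  \det \cS(\xi) = \mu^{d-1}\,(\mu + z)\,|\xi|^{2d}.
\]

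Both inequalities then reduce to elementary estimates on the complex number $\mu + z$. For the upper bound, the triangle inequality gives $|\mu + z| \leq \mu + |z|$, and since $\mu \leq \mu + |z|$, I conclude that $|\det \cS(\xi)| \leq \mu^{d-1}(\mu + |z|)|\xi|^{2d} \leq (\mu + |z|)^d |\xi|^{2d}$. For the lower bound, the inequality $|\mu + z| \geq \Re(\mu + z) = \mu + \Re(z)$ combined with the assumption $\delta\mu + \Re(z) \geq 0$ yields $|\mu + z| \geq (1 - \delta)\mu$, and hence $|\det \cS(\xi)| \geq (1-\delta)\mu^d |\xi|^{2d}$. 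Since $\delta \in (0,1)$ forces $(1-\delta)^{d-1} \leq 1$ and $2^{-d/2} \leq 1$, one has $(1-\delta)^d 2^{-d/2} \leq (1-\delta)$, which gives the (slightly weaker) form stated in the lemma.

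There is essentially no obstacle: the argument is a one-line eigenvalue computation followed by elementary bounds on $|\mu + z|$. The only mild curiosity is that the stated lower bound $\mu^d (1-\delta)^d 2^{-d/2}|\xi|^{2d}$ is noticeably weaker than the sharp bound $\mu^d (1-\delta)|\xi|^{2d}$ that naturally drops out of the eigenvalue calculation; presumably this form is chosen because it is what is needed downstream to verify the root-separation/ellipticity hypothesis of Agmon--Douglis--Nirenberg used to establish~\eqref{eq: Application of ADN}.
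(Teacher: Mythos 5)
Your proof is correct, and it takes a genuinely different (and cleaner) route than the paper's. The paper takes an arbitrary eigenvector $\eta$ of $\cS(\xi)$, uses that $\eta$ must also be an eigenvector of the real symmetric matrix $\xi\otimes\xi$ to deduce $\alpha = \mu|\xi|^2 + z[\xi\cdot\eta]^2$, and then applies the elementary inequality $\sqrt{2}\,|w+\beta|\ge |w|+\beta$ (valid for $\Re(w)\ge 0$, $\beta\ge 0$) to lower-bound \emph{every} eigenvalue modulus by $\tfrac{\mu(1-\delta)}{\sqrt{2}}|\xi|^2$; multiplying $d$ such bounds produces the factor $(1-\delta)^d 2^{-d/2}$. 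You instead observe that $\cS(\xi)$ is a rank-one perturbation of a scalar matrix, read off the spectrum exactly (eigenvalue $(\mu+z)|\xi|^2$ once and $\mu|\xi|^2$ with multiplicity $d-1$), and hence get the closed-form $\det\cS(\xi)=\mu^{d-1}(\mu+z)|\xi|^{2d}$. This yields the sharper lower bound $\mu^d(1-\delta)|\xi|^{2d}$, from which the stated weaker bound follows since $(1-\delta)^{d}2^{-d/2}\le 1-\delta$. The trade-off: the paper's argument is more robust in that it never needs the multiplicity structure, only that each eigenvalue has the form $\mu|\xi|^2 + z t$ with $t\in[0,|\xi|^2]$; your argument is shorter, gives the exact determinant, and exposes that the $2^{-d/2}$ in the lemma is an artifact of the paper's uniform eigenvalue bound rather than a genuine loss. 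Both suffice for the downstream Agmon--Douglis--Nirenberg ellipticity check.
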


\begin{proof}
The result for $z=0$ being obvious, assume from now on 
that $z\not=0.$
Let $M$ denote the matrix $M := \xi \otimes \xi$. Because  $M$ is real and symmetric, 
 $\cS(\xi)$ is diagonalizable. Let $\eta \in \IC^d$ be a unit eigenvector to $\cS (\xi)$ with corresponding eigenvalue $\alpha \in \IC$. Then,
\begin{align*}
 \alpha \eta = \cS (\xi) \eta = \mu \lvert \xi \rvert^2 \eta + z M \eta, \quad\hbox{hence}\quad
 z^{-1} (\alpha - \mu \lvert \xi \rvert^2) \eta = M \eta.
\end{align*}
Hence, $\eta$ is an eigenvector to $M$ with corresponding eigenvalue $z^{-1} (\alpha - \mu \lvert \xi \rvert^2)$. Since $M$ is real and symmetric, $\eta$ and $z^{-1} (\alpha - \mu \lvert \xi \rvert^2)$ must be real. Thus, keeping in mind that $|\eta|=1,$  we get
\begin{align*}
 \alpha  = \mu \lvert \xi \rvert^2  + z M \eta \cdot \eta = \mu \lvert \xi \rvert^2  + z [\xi \cdot \eta]^2.
\end{align*}
Let $\delta \in (0 , 1)$ be such that $\delta \mu + \Re(z) \geq 0$ holds. This combined with $\mu > 0$ and some trigonometry yields
\begin{align*}
 \lvert \alpha \rvert = \Big\lvert \mu \big( \lvert \xi \rvert^2 - \delta [\xi \cdot \eta]^2  \big) + (\delta \mu + z) [\xi \cdot \eta]^2\Big\rvert &\geq \frac{1}{\sqrt{2}} \big( \mu (1 - \delta) \lvert \xi \rvert^2 + (\delta \mu + \Re(z)) [\xi \cdot \eta]^2  \big) \\
 &\geq \frac{\mu (1 - \delta)}{\sqrt{2}} \lvert \xi \rvert^2.
\end{align*}
Consequently, the determinant of $\cS (\xi)$ satisfies
\begin{align*}
 \lvert \det (\cS (\xi)) \rvert \geq \mu^d (1 - \delta)^d 2^{- \frac{d}{2}} \lvert \xi \rvert^{2 d}.
\end{align*}
The other inequality follows from
\begin{align*}
 \lvert \alpha \rvert \leq \mu \lvert \xi \rvert^2 + \bigl|z [\xi \cdot \eta]^2\bigr| \leq \big( \mu + \lvert z \rvert \big) \lvert \xi \rvert^2. &\qedhere
\end{align*}
\end{proof}

If $d \geq 3$, then Lemma~\ref{Lem: ADN ellipticity} implies that the operator $- \mu \Delta - z \nabla \divergence$ is elliptic in the sense of Agmon, Douglis, and Nirenberg, and 
we get~\eqref{eq: Application of ADN}. For $d=2,$ one needs to verify the following supplementary condition.
\begin{lemma}
Let $d = 2$ and let $\xi , \xi^{\prime} \in \IR^2$ be linearly independent. Then, $\det (\cS (\xi + \tau \xi^{\prime}))$ regarded as a polynomial in the complex variable $\tau$ has exactly two roots with positive and two roots with negative imaginary part.
\end{lemma}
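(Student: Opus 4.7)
The plan is to compute $\det(\cS(\xi + \tau \xi'))$ explicitly as a polynomial in $\tau$, and then locate its roots. In dimension $d=2$, the analysis carried out in the proof of Lemma~\ref{Lem: ADN ellipticity} shows that $\cS(\eta)$ has eigenvalues $\mu|\eta|^2$ (with eigenspace orthogonal to $\eta$) and $(\mu+z)|\eta|^2$ (along $\eta$), so
$$\det(\cS(\eta)) = \mu(\mu+z)\,|\eta|^4.$$
Extending the quadratic form $\eta\mapsto|\eta|^2$ complex-bilinearly, I will substitute $\eta = \xi + \tau\xi'$ for $\tau\in\IC$ to get
$$\det(\cS(\xi+\tau\xi')) = \mu(\mu+z)\, Q(\tau)^2, \qquad Q(\tau) := |\xi'|^2\tau^2 + 2(\xi\cdot\xi')\tau + |\xi|^2.$$
Note that $\mu(\mu+z)\neq 0$: indeed $\mu>0$ and $\Re(\mu+z) = \mu+\Re(z)>0$ by assumption, so the polynomial $\det(\cS(\xi+\tau\xi'))$ has degree exactly $4$ in $\tau$.

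Next, I will inspect the roots of the quadratic $Q$. Its discriminant equals
$$4(\xi\cdot\xi')^2 - 4|\xi|^2|\xi'|^2,$$
which is strictly negative by Cauchy--Schwarz and the hypothesis that $\xi,\xi'\in\IR^2$ are linearly independent. Since $Q$ has real coefficients and negative discriminant, its two roots are complex conjugates $\tau_+$ and $\tau_-=\overline{\tau_+}$ with $\Im(\tau_+)>0$ and $\Im(\tau_-)<0$. Explicitly,
$$\tau_\pm = \frac{-\xi\cdot\xi' \pm \ii\sqrt{|\xi|^2|\xi'|^2 - (\xi\cdot\xi')^2}}{|\xi'|^2}\cdotp$$

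Finally, because $\det(\cS(\xi+\tau\xi')) = \mu(\mu+z)\,Q(\tau)^2$, the four roots of this degree-$4$ polynomial are precisely $\tau_+$ and $\tau_-$, each with multiplicity $2$. Counted with multiplicity, exactly two lie in the upper half-plane and exactly two in the lower half-plane, which is the desired conclusion. There is no real obstacle here: the argument is a direct computation once the complex-bilinear extension of $|\cdot|^2$ is fixed and the factorization $\det(\cS(\eta)) = \mu(\mu+z)|\eta|^4$ is in hand; the linear independence of $\xi$ and $\xi'$ enters only through the strict inequality in Cauchy--Schwarz that forces non-real roots.
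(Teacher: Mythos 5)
Your proof is correct and follows essentially the same route as the paper: both establish the factorization $\det(\cS(\xi+\tau\xi')) = \mu(\mu+z)\bigl[(\xi+\tau\xi')\cdot(\xi+\tau\xi')\bigr]^2$ and then use linear independence of $\xi,\xi'$ to rule out real roots. You make the root structure slightly more explicit (two conjugate roots, each of multiplicity two, via the discriminant and Cauchy--Schwarz) whereas the paper argues abstractly that a real root would force $\xi,\xi'$ to be linearly dependent, but these are the same observation.
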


\begin{proof}
The determinant of $\cS (\xi + \tau \xi^{\prime})$ is calculated as
\begin{align}
\label{Eq: Supplemenaty condition}
 \det (\cS (\xi + \tau \xi^{\prime})) = \mu (\mu + z) \big[ (\xi + \tau \xi^{\prime}) \cdot (\xi + \tau \xi^{\prime}) \big]^2.
\end{align}
Due to the assumptions on $\mu$ and $z$, the prefactor cannot be zero. If there would be a real root to the equation $\det(\cS(\xi + \tau \xi^{\prime})) = 0$, then $\xi$ and $\xi^{\prime}$ would have to be linearly dependent, a contradiction. Thus,~\eqref{Eq: Supplemenaty condition} determines a fourth order polynomial in $\tau$ with real coefficients and no real roots. Hence, there must be two roots with positive and with negative imaginary part.
\end{proof}
\smallbreak
Let us now go to the existence part of the proposition. 
Clearly,  the case $p = 2$ follows from 
Proposition~\ref{Prop: Higher regularity for reduced operator on L2}.
The case $p>2$ will be also a consequence of it. 
Indeed, then  $L_p$ is injective as it is the part of $L_2$ in $\L^p (\Omega ; \IC^d)$. 
Next, to prove the surjectivity of $L_p,$  let us first consider $p_1 \geq 2$ satisfying
 $1 / p_1 - 1 / 2 \leq 2 / d,$ and let $f \in \C^{\infty} (\overline{\Omega} ; \IC^d)$. By Proposition~\ref{Prop: Higher regularity for reduced operator on L2} there exists a unique $u \in \dom(L_2)$ with $L_2 u = f$ and $u \in \W^{k + 4 , 2} (\Omega ; \IC^d)$. By Sobolev's embedding theorem, we conclude that $u \in \W^{k + 2 , p_1} (\Omega ; \IC^d)$. Thus, by virtue of Inequality~\eqref{eq: Application of ADN} we discover that there exists a constant $C > 0$ such that
\begin{align*}
 \| u \|_{\W^{k + 2 , p_1} (\Omega ; \IC^d)} \leq C \big( \| f \|_{\W^{k , p_1} (\Omega ; \IC^d)} + \| u \|_{\L^{p_1} (\Omega ; \IC^d)} \big)\cdotp
\end{align*}
Moreover, by Sobolev's embedding theorem and again by Proposition~\ref{Prop: Higher regularity for reduced operator on L2} followed by H\"older's inequality together with the boundedness of $\Omega$, we derive
\begin{align}
\label{Eq: Bootstrapped elliptic regularity}
\begin{aligned}
 \| u \|_{\W^{k + 2 , p_1} (\Omega ; \IC^d)} &\leq C \big( \| f \|_{\W^{k , p_1} (\Omega ; \IC^d)} + \| u \|_{\W^{2 , 2} (\Omega ; \IC^d)} \big) \\
 &\leq C \big( \| f \|_{\W^{k , p_1} (\Omega ; \IC^d)} + \| f \|_{\L^2 (\Omega ; \IC^d)} \big) \\
 &\leq C \| f \|_{\W^{k , p_1} (\Omega ; \IC^d)}.
\end{aligned}
\end{align}
To proceed let $p_2 \geq p_1$ with $1 / p_1 - 1 / p_2 \leq 2 / d$. By Proposition~\ref{Prop: Higher regularity for reduced operator on L2}, we now find $u \in \W^{k + 6 , 2} (\Omega ; \IC^d) \hookrightarrow \W^{k + 2 , p_2} (\Omega ; \IC^d)$. Inequality~\eqref{eq: Application of ADN} followed by Sobolev's embedding theorem then provide the estimate
\begin{align*}
 \| u \|_{\W^{k + 2 , p_2} (\Omega ; \IC^d)} \leq C \big( \| f \|_{\W^{k , p_2} (\Omega ; \IC^d)} + \| u \|_{\W^{2 , p_1} (\Omega ; \IC^d)} \big)\cdotp
\end{align*}
Combining this with~\eqref{Eq: Bootstrapped elliptic regularity} in the case $k = 0$, H\"older's inequality, and the boundedness of $\Omega$ we conclude that
\begin{align*}
 \| u \|_{\W^{k + 2 , p_2} (\Omega ; \IC^d)} \leq C \| f \|_{\W^{k , p_2} (\Omega ; \IC^d)}.
\end{align*}
Bootstrapping this argument delivers the stated estimate of the proposition for all $p \geq 2$. By density, we get~\eqref{Eq: Elliptic regularity} for all $f \in \W^{k , p} (\Omega ; \IC^d).$ Taking  $k = 0$ gives the surjectivity of $L_p$. 
\medbreak
Let us next consider the case $1 < p < 2.$ Then, the invertibility of its adjoint (as 
according to  Lemma~\ref{Lem: Duality}, it is equal to $(L_2^*)_{p'}$, 
and $L_2^*$ is $L_2$  with $z$ replaced by $\overline{z}$), and standard annihilator relations imply that $L_p$ is injective and has dense range. \par
Next, for $f \in \L^2 (\Omega ; \IC^d) \hookrightarrow \L^p (\Omega ; \IC^d),$ let $u \in \dom(L_2)$ be such that $L_2 u = f$. In this case, we already know that $u \in \W^{2 , 2} (\Omega ; \IC^d) \hookrightarrow \W^{2 , p} (\Omega ; \IC^d)$ is valid, and Inequality~\eqref{eq: Application of ADN} implies
\begin{align}
\label{Eq: A priori estimate p < 2}
 \| u \|_{\W^{2 , p} (\Omega ; \IC^d)} \leq C \big( \| f \|_{\L^p (\Omega ; \IC^d)} + \| u \|_{\L^p (\Omega ; \IC^d)} \big).
\end{align}
As $u \in \dom(L_p),$ there exists by definition a sequence $(u_n)_{n \in \IN} \subset \dom(L_2)$ which converges in $\L^p (\Omega ; \IC^d)$ to $u$ and for which $f_n := L_2 u_n$ converges in $\L^p (\Omega ; \IC^d)$ to $f := L_p u$. Estimate~\eqref{Eq: A priori estimate p < 2} implies then that $(u_n)_{n \in \IN}$ converges in $\W^{2 , p} (\Omega ; \IC^d).$ Hence~\eqref{Eq: A priori estimate p < 2} is valid for all $u \in \dom(L_p)$. \par
One can show that~\eqref{Eq: Elliptic regularity 2} with $k=0$ is valid by contradiction. Assuming the contrary, we obtain the existence of a sequence $(u_n)_{n \in \IN} \subset \dom(L_p)$ with $f_n := L_p u_n$ such that for all $n \in \IN$
\begin{align*}
 \| u_n \|_{\W^{2 , p} (\Omega ; \IC^d)} = 1 \qquad \text{and} \qquad \| f_n \|_{\L^p (\Omega ; \IC^d)} \to 0 \quad \text{as} \quad n \to \infty.
\end{align*}
By compactness (and by going over to a subsequence), $(u_n)_{n \in \IN}$ converges in $\L^p (\Omega ; \IC^d)$ to some $u \in \L^p (\Omega ; \IC^d)$. The closedness of $L_p$ then implies $u \in \dom(L_p)$ and $L_p u = 0$. Since we already know that $L_p$ is injective, it follows that $u = 0$. Now,~\eqref{Eq: A priori estimate p < 2} gives a contradiction and thus we infer that~\eqref{Eq: Elliptic regularity 2} for $k = 0$ is valid. This estimate in turn implies that the range of $L_p$ is closed and since it is dense in $\L^p (\Omega ; \IC^d)$, we deduce that $0 \in \rho(L_p)$. \par
Next, let $f \in \dom(L_p)$ and $u \in \dom(L_p^2)$ with $L_p u = f$. By definition, there exists $(f_n)_{n \in \IN} \subset \dom(L_2)$ with $f_n \to f$ and $L_2 f_n \to L_p f$ in $\L^p (\Omega ; \IC^d)$ as $n \to \infty$. By~\eqref{Eq: A priori estimate p < 2} it holds
\begin{align*}
 \| f_n - f_m \|_{\W^{2 , p} (\Omega ; \IC^d)} \leq C \big( \| L_2 (f_n - f_m) \|_{\L^p (\Omega ; \IC^d)} + \| f_n - f_m \|_{\L^p (\Omega ; \IC^d)} \big)\cdotp
\end{align*}
Thus, $(f_n)_{n \in \IN}$ is a Cauchy sequence in $\W^{2 , p} (\Omega ; \IC^d)$. Define $u_n := L_2^{-1} f_n \in \dom(L_2^2)$ and observe that $u_n \to u$ in $\L^p (\Omega ; \IC^d)$ as $n \to \infty$. Since $\dom(L_2^2) \hookrightarrow \W^{4 , 2} (\Omega ; \IC^d) \hookrightarrow \W^{4 , p} (\Omega ; \IC^d)$, Inequality~\eqref{eq: Application of ADN} guarantees that
\begin{align*}
 \| u_n - u_m \|_{\W^{4 , p} (\Omega ; \IC^d)} \leq C \big( \| f_n - f_m \|_{\W^{2 , p} (\Omega ; \IC^d)} + \| u_n - u_m \|_{\L^p (\Omega ; \IC^d)} \big)\cdotp
\end{align*}
In the limit, this implies that $u \in \W^{4 , p} (\Omega ; \IC^d)$ and
\begin{align*}
 \| u \|_{\W^{4 , p} (\Omega ; \IC^d)} \leq C \big( \| f \|_{\W^{2 , p} (\Omega ; \IC^d)} + \| u \|_{\L^p (\Omega ; \IC^d)} \big)\cdotp
\end{align*}
As above,~\eqref{Eq: Elliptic regularity 2} for $k = 1$ follows from a contradiction argument. The case $k \geq 2$ follow the same strategy by iterating this argument.
\medbreak
Finally, using what we just proved in the case $k=0$ in the definition of $\dom(L_2)$  ensures 
that $\dom(L_p)\hookrightarrow \W^{2,p}(\Omega;\IC^d)\cap\W^{1,p}_0(\Omega;\IC^d),$
and the reverse embedding is obvious.
\end{proof}

The following lemma clarifies the relationships between $L_p,$ $\cL_p$ and $\wt\cL_p.$
\begin{lemma}
\label{Lem: Extrapolation lemma}
Let $1 < p < \infty$. Under the notations in~\eqref{eq:Fi} with $r=p',$ and~\eqref{eq:cLp}, 
the following statements hold true:
\begin{enumerate}
 \item \label{Item: Recovering extended operator} For all $u \in \dom(L_p)$ it holds 
 \begin{align*}
  \Phi^{-1} \widetilde{\cL}_p \Phi u = L_p u.
 \end{align*}
 \item \label{Item: Represenation of inverse} For all $f \in \L^p (\Omega ; \IC^d)$ it holds
 \begin{align*}
  \Phi^{-1} \cL_p^{-1} \Phi f = L_p^{-1} f.
 \end{align*}
 \item \label{Item: Similarity of operators} For    $T:=\widetilde{\cL}_p\Phi,$  we have that $T:\L^p (\Omega ; \IC^d) \to X^{-1}_p$ is an isomorphism and that
 \begin{align*}
   \cL_p= T L_p T^{-1}.
 \end{align*}
 \item \label{Item: Part is adjoint} If ${\bf L}_p$ denotes the part of $\cL_p$ in $\L^{p^{\prime}} (\Omega ; \IC^d)^{\prime}$, then it holds
\begin{align*}
 {\bf L}_p = (L_2^*)_{p^{\prime}}^{\prime}.
\end{align*}
 \item \label{Item: Recovering original operator} It holds 
 \begin{align*}
  \Phi^{-1} {\bf L}_p \Phi = L_p.
 \end{align*}
\end{enumerate}
\end{lemma}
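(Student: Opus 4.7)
The five statements are intertwined, so I would prove them in the order (1), (3), (2), (4), (5), since each relies on the previous ones. The unifying idea is that $\wt\cL_p$ was defined as the antidual of the bounded bijection $(L_2^*)_{p'} : \dom((L_2^*)_{p'}) \to \L^{p'}(\Omega;\IC^d)$; once the duality identity is unpacked, everything else is bookkeeping using the invertibility of $L_p$, $(L_2^*)_{p'}$, and the canonical isomorphism $\Phi$, together with Lemma~\ref{Lem: Duality}.

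For (1), the plan is to evaluate $\wt\cL_p \Phi u$ as an element of $\dom((L_2^*)_{p'})'$: for $v\in\dom((L_2^*)_{p'})$ one has by definition
\[
 \langle \wt\cL_p \Phi u , v\rangle \;=\; \langle \Phi u, (L_2^*)_{p'} v\rangle \;=\; \int_\Omega u\cdot\overline{(L_2^*)_{p'} v}\,\d x.
\]
When $p\geq 2$, we have $\dom(L_p)\subset\dom(L_2)$ and $\dom((L_2^*)_{p'})\subset\dom(L_2^*)$, so the $\L^2$-adjoint identity gives $\int u\cdot\overline{L_2^* v}\,\d x = \int L_p u\cdot\bar v\,\d x = \langle \Phi L_p u, v\rangle$. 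Hence $\wt\cL_p\Phi u=\Phi L_p u\in \L^{p'}(\Omega;\IC^d)'$, which yields (1). For $1<p<2$, I use that $L_p$ is the closure of $L_2$ in $\L^p$, apply the identity to an approximating sequence $u_n\in\dom(L_2)$, and pass to the limit using continuity of $\Phi$ and of $\wt\cL_p$ (the latter being bounded between Banach spaces).

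Statement (3) follows quickly: $\Phi:\L^p\to\L^{p'}(\Omega;\IC^d)'$ is the canonical isomorphism, and $\wt\cL_p:\L^{p'}(\Omega;\IC^d)'\to X_p^{-1}$ is an isomorphism as the dual of the bounded bijection $(L_2^*)_{p'}$ (which is invertible by the analogue of Proposition~\ref{Prop: Regularity for reduced operator on Lp} applied with $\bar z$ in place of $z$). Therefore $T=\wt\cL_p\Phi$ is an isomorphism. The identity $\cL_p=TL_pT^{-1}$ is then checked by taking $\fu\in\dom(\cL_p)=\L^{p'}(\Omega;\IC^d)'$, setting $w:=L_p^{-1}\Phi^{-1}\fu\in\dom(L_p)$, and using (1) to compute $Tw=\wt\cL_p\Phi w=\Phi L_p w=\fu$, so that $T^{-1}\fu=w$ and $TL_p T^{-1}\fu = \wt\cL_p\Phi(\Phi^{-1}\fu)=\cL_p\fu$. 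Statement (2) is then immediate by applying $\cL_p$ to $\Phi L_p^{-1} f$ and using (1).

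The delicate step is (4), where the distinction between the antidual $(L_2^*)_{p'}^{\circ}=\wt\cL_p$ of $(L_2^*)_{p'}$ as a \emph{bounded} operator and the adjoint $(L_2^*)_{p'}^{\prime}$ of the same operator viewed as a \emph{densely defined, closed unbounded} operator on $\L^{p'}$ has to be handled carefully. The observation is that, for $\fu\in\L^{p'}(\Omega;\IC^d)'$, the functional $v\mapsto\langle\fu,(L_2^*)_{p'}v\rangle$ on $\dom((L_2^*)_{p'})$ belongs to $\L^{p'}(\Omega;\IC^d)'$ (i.e.\ extends continuously for the $\L^{p'}$-norm) if and only if $\fu\in\dom((L_2^*)_{p'}^{\prime})$, in which case that extension is precisely $(L_2^*)_{p'}^{\prime}\fu$. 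This exactly matches the definition of the part ${\bf L}_p$ of $\cL_p=\wt\cL_p|_{\L^{p'}(\Omega;\IC^d)'}$ in $\L^{p'}(\Omega;\IC^d)'$, proving (4). Finally, (5) combines (4) with the identity $((L_2^*)_{p'})^*=L_p$ from Lemma~\ref{Lem: Duality} and the footnote relation $A^*=\Phi^{-1}A^{\prime}\Phi$ between the $\L^{p'}$-adjoint and the antidual.
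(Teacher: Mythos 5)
Your overall strategy matches the paper's: unwind the antidual pairing defining $\wt\cL_p$, reduce to the $\L^2$-adjoint identity via the density/closure structure of $L_p$ and $(L_2^*)_{p'}$, prove (3) by composing the two isomorphisms $\Phi$ and $\wt\cL_p$, and finish (5) by combining (4) with Lemma~\ref{Lem: Duality} and the footnote identity $A^*=\Phi^{-1}A'\Phi$. The reordering (1),(3),(2),(4),(5) is harmless, and your treatment of (4) correctly isolates the criterion that characterizes $\dom((L_2^*)_{p'}^{\prime})$ inside $\L^{p'}(\Omega;\IC^d)'$; this is also the content that the paper's one-line proof of (1) implicitly invokes, so making it explicit in your (1) is a genuine improvement in self-containedness.

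There is, however, a slip in the case analysis of (1). For $p\geq 2$ you assert $\dom((L_2^*)_{p'})\subset\dom(L_2^*)$ and then directly apply the $\L^2$-adjoint relation. This is only correct for $p=2$. Once $p>2$ one has $p'<2$, and by the paper's convention $(L_2^*)_{p'}$ is then the \emph{closure} of $L_2^*$ in $\L^{p'}$, not its part; its domain is $\W^{2,p'}\cap\W^{1,p'}_0$, which is not contained in $\dom(L_2^*)=\W^{2,2}\cap\W^{1,2}_0$ when $p'<2$. The fix is the same approximation you already carry out for $u$ when $1<p<2$, applied now to $v$: take $v_n\in\dom(L_2^*)$ with $v_n\to v$ and $L_2^*v_n\to(L_2^*)_{p'}v$ in $\L^{p'}$, use $\int_\Omega u\cdot\overline{L_2^*v_n}\,\d x=\int_\Omega L_2u\cdot\bar v_n\,\d x$, and pass to the limit using $u, L_pu\in\L^p$. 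In short, whichever of $L_p$ and $(L_2^*)_{p'}$ is a closure is the one that needs approximating, and exactly one of them is (except at $p=2$). A smaller point: in (3) you only verify $\dom(\cL_p)\subset\dom(TL_pT^{-1})$; the reverse inclusion is equally needed, and follows at once from (1): if $w:=T^{-1}\fu\in\dom(L_p)$, then $\fu=Tw=\wt\cL_p\Phi w=\Phi L_pw\in\L^{p'}(\Omega;\IC^d)'=\dom(\cL_p)$.
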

\begin{proof}
\begin{enumerate}
 \item Let $u \in \dom(L_p)$. Then, by virtue of the definition of $\widetilde{\cL}_p$, the definition given in~\eqref{Eq: Lp adjoint}, and Lemma~\ref{Lem: Duality}, we have
\begin{align*}
 \Phi^{-1} \widetilde{\cL}_p \Phi u = \Phi^{-1} (L_2^*)_{p^{\prime}}^{\circ} \Phi u = (L^*_2)_{p^{\prime}}^* u = L_p u.
\end{align*}
 \item This is just a reformulation of~\eqref{Item: Recovering extended operator}.
 \item Notice that since $L_p$ maps into $\L^p (\Omega ; \IC^d)$ it holds $\dom(\widetilde{\cL}_p \Phi L_p \Phi^{-1} \widetilde{\cL}_p^{-1}) = \dom(L_p \Phi^{-1} \widetilde{\cL}_p^{-1})$. \par
Let $\fu \in \dom(L_p \Phi^{-1} \widetilde{\cL}_p^{-1})$. Since $L_p$ is invertible, there exists $f \in \L^p (\Omega ; \IC^d)$ such that
\begin{align*}
 L_p^{-1} f = \Phi^{-1} \widetilde{\cL}_p^{-1} \fu.
\end{align*}
Applying~\eqref{Item: Represenation of inverse} delivers $f = \Phi^{-1} \fu$ and it follows that $\fu \in \dom(\cL_p)$. Furthermore, another application of~\eqref{Item: Represenation of inverse} yields
\begin{align*}
TL_pT^{-1}u= \widetilde{\cL}_p \Phi L_p \Phi^{-1} \widetilde{\cL}_p^{-1} \fu = \widetilde{\cL}_p \fu = \cL_p \fu.
\end{align*}
\indent Conversely, let $\fu \in \dom(\cL_p) = \L^{p^{\prime}} (\Omega ; \IC^d)^{\prime}$. Then~\eqref{Item: Recovering extended operator} implies
\begin{align*}
 \fu = \widetilde{\cL}_p \Phi \Phi^{-1} \widetilde{\cL}_p^{-1} \Phi \Phi^{-1} \fu = \widetilde{\cL}_p \Phi L_p^{-1} \Phi^{-1} \fu.
\end{align*}
It follows that $\Phi^{-1} \widetilde{\cL}_p^{-1} \fu \in \dom(L_p)$ and thus that $\fu \in \dom(L_p \Phi^{-1} \widetilde{\cL}_p^{-1})$.
 \item Let $\fu \in \dom({\bf L}_p)$. Then by definition of the part of an operator, it holds $\fu \in \L^{p^{\prime}} (\Omega ; \IC^d)^{\prime}$ and $(L^*_2)_{p^{\prime}}^{\circ} \fu \in \L^{p^{\prime}} (\Omega ; \IC^d)^{\prime}$. In particular, there exists $w \in \L^{p^{\prime}} (\Omega ; \IC^d)^{\prime}$ such that for all $v \in \dom((L_2^*)_{p^{\prime}})$ it holds
\begin{align*}
 \langle (L^*_2)_{p^{\prime}}^{\circ} \fu , v \rangle_{\dom((L_2^*)_{p^{\prime}})^{\prime} , \dom((L_2^*)_{p^{\prime}})} = \langle w , v \rangle_{(\L^{p^{\prime}})^{\prime} , \L^{p^{\prime}}}.
\end{align*}
Consequently, 
\begin{align*}
 \langle \fu , (L_2^*)_{p^{\prime}} v \rangle_{(\L^{p^{\prime}})^{\prime} , \L^{p^{\prime}}} = \langle (L^*_2)_{p^{\prime}}^{\circ} \fu , v \rangle_{\dom((L_2^*)_{p^{\prime}})^{\prime} , \dom((L_2^*)_{p^{\prime}})} = \langle w , v \rangle_{(\L^{p^{\prime}})^{\prime} , \L^{p^{\prime}}}. 
\end{align*}
This implies that $\fu \in \dom((L_2^*)_{p^{\prime}}^{\prime})$ and that $(L_2^*)_{p^{\prime}}^{\prime} \fu = w$. \par
Conversely, let $\fu \in \dom((L_2^*)_{p^{\prime}}^{\prime})$. By definition, it holds $\fu \in \L^{p^{\prime}} (\Omega ; \IC^d)^{\prime}$ and there exists $w \in \L^{p^{\prime}} (\Omega ; \IC^d)^{\prime}$ such that for all $v \in \dom((L_2^*)_{p^{\prime}})$ it holds
\begin{align*}
 \langle \fu , (L_2^*)_{p^{\prime}} v \rangle_{(\L^{p^{\prime}})^{\prime} , \L^{p^{\prime}}} = \langle w , v \rangle_{(\L^{p^{\prime}})^{\prime} , \L^{p^{\prime}}}.
\end{align*}
Thus,
\begin{align*}
 \langle (L^*_2)_{p^{\prime}}^{\circ} \fu , v \rangle_{\dom((L_2^*)_{p^{\prime}})^{\prime} , \dom((L_2^*)_{p^{\prime}})} = \langle \fu , (L_2^*)_{p^{\prime}} v \rangle_{(\L^{p^{\prime}})^{\prime} , \L^{p^{\prime}}} = \langle w , v \rangle_{(\L^{p^{\prime}})^{\prime} , \L^{p^{\prime}}}. 
\end{align*}
It follows that $(L^*_2)_{p^{\prime}}^{\circ} \fu \in \L^{p^{\prime}} (\Omega ; \IC^d)^{\prime}$ and thus $u \in \dom({\bf L}_p)$.
 \item This readily follows by combining~\eqref{Item: Part is adjoint} with~\eqref{Eq: Lp adjoint} and Lemma~\ref{Lem: Duality}. \qedhere
\end{enumerate}
\end{proof}

\begin{lemma}
\label{Lem: Domain of part}
For all $1 < p < \infty$, $1 \leq q \leq \infty$, and $- 1 < s < 1$ it holds with equivalent norms
that $\dom({\bf L}_{p , q , s}) = Y^{s + 1}_{p , q}.$

Furthermore, if $\theta\in(0,1)$ and $s+\theta<1$, then  the part of ${\bf L}_{p , q , s}$ on $X^{s + \theta}_{p , q} \simeq \B_{p , q}^{2 (s  + \theta)} (\Omega ; \IC^d)$ coincides with ${\bf L}_{p , q , s + \theta}$. 
\end{lemma}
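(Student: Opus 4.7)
The plan is to transport the elliptic regularity of $L_p$ (Proposition~\ref{Prop: Regularity for reduced operator on Lp}) to mapping properties of $\cL_p$ on the extrapolation scale $\{X^k_p\}$, and then to read off $\dom({\bf L}_{p,q,s})$ by real interpolation.

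First, I would establish that $\cL_p$ induces a (continuous) isomorphism $X^k_p \to X^{k-1}_p$ for each $k \in \{0, 1, 2\}$. The case $k = 0$ is built into the definition of $\cL_p$ via $\widetilde{\cL}_p \in \Isom(X^0_p, X^{-1}_p)$. For $k = 1, 2$, Lemma~\ref{Lem: Extrapolation lemma}~\eqref{Item: Recovering extended operator} gives, for $\fu = \Phi u \in X^k_p$ with $u \in \dom(L_p^k)$, the identity $\cL_p \fu = \Phi L_p u$ with $L_p u \in \dom(L_p^{k-1})$, and the norms $\|\fu\|_{X^k_p} = \|L_p^k u\|_{\L^p}$ and $\|\cL_p \fu\|_{X^{k-1}_p} = \|L_p^{k-1}(L_p u)\|_{\L^p}$ coincide; invertibility follows from $0 \in \rho(L_p)$. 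Applying real interpolation at parameter $(s+1)/2 \in (0,1)$ to the bounded maps $\cL_p : X^0_p \to X^{-1}_p$ and $\cL_p : X^2_p \to X^1_p$ (both with bounded inverses) then yields an isomorphism
\[
\cL_p : Y^{s+1}_{p,q} \longrightarrow X^s_{p,q},
\]
while interpolating the embeddings $X^0_p \hookrightarrow X^{-1}_p$ and $X^2_p \hookrightarrow X^1_p$ at the same parameter gives $Y^{s+1}_{p,q} \hookrightarrow X^s_{p,q}$.

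Next, I would unwind the definition of the part: $\fu \in \dom({\bf L}_{p,q,s})$ iff $\fu \in \dom(\cL_p) = X^0_p$, $\fu \in X^s_{p,q}$, and $\cL_p \fu \in X^s_{p,q}$. The embedding $Y^{s+1}_{p,q} \hookrightarrow X^s_{p,q}$ renders the second requirement automatic once $\fu$ lies in $Y^{s+1}_{p,q}$, so the conditions reduce to $\fu \in X^0_p$ together with $\cL_p \fu \in X^s_{p,q}$. The isomorphism above identifies this set with $Y^{s+1}_{p,q}$, and the graph norm $\|\fu\|_{X^s_{p,q}} + \|\cL_p \fu\|_{X^s_{p,q}}$ is comparable to $\|\cL_p \fu\|_{X^s_{p,q}} \asymp \|\fu\|_{Y^{s+1}_{p,q}}$, yielding the equivalence of norms.

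For the furthermore clause, I would apply the first claim at both levels $s$ and $s + \theta$. The part of ${\bf L}_{p,q,s}$ on $X^{s+\theta}_{p,q}$ has domain
\[
\{\fu \in Y^{s+1}_{p,q} \cap X^{s+\theta}_{p,q} : \cL_p \fu \in X^{s+\theta}_{p,q}\}.
\]
Since $X^{s+\theta}_{p,q} \hookrightarrow X^s_{p,q}$, the conditions $\fu \in X^0_p$ and $\cL_p \fu \in X^{s+\theta}_{p,q}$ already force $\fu \in Y^{s+1}_{p,q}$, so the isomorphism at level $s + \theta$ identifies the whole set with $Y^{s+\theta+1}_{p,q} = \dom({\bf L}_{p,q,s+\theta})$. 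Both operators act by the same rule $\cL_p$, so they coincide. The main technical point throughout is the careful bookkeeping of the various interpolation embeddings among the $X$- and $Y$-scales.
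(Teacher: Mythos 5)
Your proof is correct and takes essentially the same approach as the paper: transport the elliptic regularity of $L_p$ to mapping properties of $\cL_p$ on the $X^k_p$ scale, interpolate to get an isomorphism $\cL_p \colon Y^{s+1}_{p,q} \to X^s_{p,q}$, then unwind the definition of the part. The one small variation is that you obtain the embedding $Y^{s+1}_{p,q} \hookrightarrow X^s_{p,q}$ by interpolating the embeddings $X^0_p \hookrightarrow X^{-1}_p$ and $X^2_p \hookrightarrow X^1_p$, whereas the paper invokes its concrete Besov-space identification (Proposition~\ref{Prop: Identification ground spaces}) for this step; your route stays entirely in the abstract framework and is, if anything, a touch cleaner, but it is not a genuinely different argument.
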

\begin{proof}
First of all, recall that $\cL_p$ is invertible and that its inverse is a bounded operator
\begin{align}
\label{Eq: First interpolation endpoint}
 \cL_p^{-1} : X_p^{-1} \to X_p^0.
\end{align}
If $\ff \in X^1_p$, then $\ff$ can be written as $\ff = \Phi f$ for some $f \in \dom(L_p)$. Now, Lemma~\ref{Lem: Extrapolation lemma}~\eqref{Item: Represenation of inverse} implies
\begin{align*}
 \cL_p^{-1} \ff = \Phi L_p^{-1} \Phi^{-1} \ff = \Phi L_p^{-1} f.
\end{align*}
By virtue of Proposition~\ref{Prop: Regularity for reduced operator on Lp}, we thus have
\begin{align*}
 \| \cL_p^{-1} \ff \|_{X^2_p} = \| L_p^2 L_p^{-1} f \|_{\L^p (\Omega ; \IC^d)} \leq C \| L_p^{-1} f \|_{\W^{4 , p} (\Omega ; \IC^d)} \leq C \| f \|_{\W^{2 , p} (\Omega ; \IC^d)} \leq C \| \ff \|_{X^1_p}.
\end{align*}
It follows that $\cL_p^{-1}$ gives rise to a bounded operator
\begin{align}
\label{Eq: Second interpolation endpoint}
 \cL_p^{-1} : X_p^1 \to X_p^2.
\end{align}
Interpolating~\eqref{Eq: First interpolation endpoint} and~\eqref{Eq: Second interpolation endpoint} reveals that for all $- 1 < s < 1$, $1 < p < \infty$, and $1 \leq q \leq \infty$ the operator $\cL_p^{-1}$ is a bounded operator 
\begin{align}
\label{Eq: Interpolated mapping properties}
 \cL_p^{-1} : X^s_{p , q} \to Y^{s + 1}_{p , q}.
\end{align}
\indent Let $\fu \in \dom({\bf L}_{p , q , s})$. Then, by~\eqref{Eq: Interpolated mapping properties}
\begin{align*}
 \fu = \cL_p^{-1} \cL_p \fu \in Y^{s + 1}_{p , q}.
\end{align*}
Moreover, since $\cL_p \fu = {\bf L}_{p , q , s} \fu$, the boundedness stated in~\eqref{Eq: Interpolated mapping properties} implies that there exists $C > 0$ such that
\begin{align*}
 \| \fu \|_{Y^{s + 1}_{p , q}} \leq C \| {\bf L}_{p , q , s} \fu \|_{X^s_{p , q}}.
\end{align*}
\indent Conversely, let $\fu \in Y^{s + 1}_{p , q}$. Since $\dom (\cL_p) = \L_{p^{\prime}} (\Omega ; \IC^d)^{\prime}$ and since $Y^{s + 1}_{p , q} \hookrightarrow X^0_p = \L_{p^{\prime}} (\Omega ; \IC^d)^{\prime}$ (cf.~\eqref{Eq: Embedding Y^t}), we have $\fu \in \dom(\cL_p)$. By~\eqref{Eq: Interpolated mapping properties}, we find $\cL_p \fu \in X^s_{p , q}$ and the only information we need, to conclude that $\fu \in \dom({\bf L}_{p , q , s})$, is that $\fu \in X^s_{p , q}$. This, however, follows by Proposition~\ref{Prop: Identification ground spaces}. Finally, the inequality follows from 
\begin{align*}
 \| {\bf L}_{p , q , s} \fu \|_{X^s_{p , q}} = \| \cL_p \fu \|_{X^s_{p , q}} \leq C \| \fu \|_{Y^{s + 1}_{p , q}}. 
\end{align*}
Finally,  to prove the second part of the lemma, we use that the domain of the part of ${\bf L}_{p , q , s}$ on $X^{s + \theta}_{p , q}$ is by definition given as
\begin{align*}
 \big\{ \fu \in \dom({\bf L}_{p , q , s}) \cap X^{s + \theta}_{p , q} &: {\bf L}_{p , q , s} \fu \in X^{s + \theta}_{p , q} \big\} \\
 &= \big\{ \fu \in \dom(\cL_p) \cap X^{s}_{p , q} \cap X^{s + \theta}_{p , q} : \cL_p \fu \in X^{s}_{p , q} \cap X^{s + \theta}_{p , q} \big\} \\
 &= \big\{ \fu \in \dom(\cL_p) \cap X^{s + \theta}_{p , q} : \cL_p \fu \in X^{s + \theta}_{p , q} \big\} \\
 &= \dom({\bf L}_{p , q , s + \theta}). \qedhere
\end{align*}
\end{proof}


\section{Some results for Besov spaces in domains}


\begin{proof}[Proof of Proposition~\ref{p:product}] Consider two real valued functions 
$u\in \B^s_{p,1}(\Omega) $ and $v\in\B^{d/p}_{p,1}(\Omega).$
We want to prove that $uv$ lies in $\B^s_{p,1}(\Omega),$
if $-\min(d/p,d/p')<s\leq d/p.$ 
The result  is classical for $\Omega=\IR^d$
and the general domain case follows from the definition of Besov spaces by restriction
given in Section \ref{s:lame}. 
Indeed,  if $u\in \B^s_{p,1}(\Omega)$ and $v\in \B^{d/p}_{p,1}(\Omega),$ then 
for any extension $\wt u\in \B^s_{p,1}(\IR^d)$ and $\wt v\in \B^{d/p}_{p,1}(\IR^d)$
of $u$ and $v$ on $\IR^d,$ we may write 
$$\|\wt u\,\wt v\|_{\B^s_{p,1}(\IR^d)}\lesssim \|\wt u\|_{\B^s_{p,1}(\IR^d)}
\|\wt v\|_{\B^{d/p}_{p,1}(\IR^d)}.$$
As $\wt u\,\wt v$ is  an extension of $uv$ on $\IR^d,$ taking the infimum on 
all extensions gives the result. 
\end{proof}

\begin{proof}[Proof of Proposition~\ref{p:compo}] 
Looking at the proof of~\cite[Prop.~1.7]{D-Chambery} and using the embedding
of ${\B^{d/p}_{p,1}(\IR^d)}$ in $\L^\infty(\IR^d),$ we see that in the $\IR^d$ case, 
we do have the result with the estimate  
$$\|K(z)\|_{\B^{d/p}_{p,1}(\IR^d)}\leq C (1+ \|z\|_{\B^{d/p}_{p,1}(\IR^d)})^k
  \|z\|_{\B^{d/p}_{p,1}(\IR^d)}\with k:=\lceil d/p\rceil.$$
  The result in a general domain then follows, considering 
  all the extensions $\wt z\in \B^{d/p}_{p,1}(\IR^d)$ of $z\in \B^{d/p}_{p,1}(\Omega),$ 
  then taking the infimum. 
  
  The second part of the proposition follows from  the first part, the following formula:
  $$K(z_2)-K(z_1)=K'(0)(z_2-z_1)+\int_0^1\bigl(K'(z_1+\tau(z_2-z_1))-K'(0)\bigr)(z_2-z_1)\,\d\tau$$
    and Proposition~\ref{p:product}.
    \end{proof}

Property~\eqref{eq:unifB} is a consequence of the following proposition.
    \begin{proposition} Let $f$ be in $\B^{d/p}_{p,1}(\IR^d)$ for some $1\leq p\leq\infty.$ 
    Let $\psi$ be a smooth function, supported in the unit ball of $\IR^d.$ 
    Denote $\psi_{\delta, x_0}:=\psi(\delta^{-1}(\cdot-x_0))$ for $\delta>0$ and $x_0\in\IR^d.$     
    Then, 
    $$\lim_{\delta\to0} \|\psi_{\delta, x_0}\: (f - f(x_0))\|_{\B^{d/p}_{p,1}}=0\quad\hbox{uniformly with respect to } x_0.$$
    \end{proposition}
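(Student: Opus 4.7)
The strategy is a density-plus-scaling argument exploiting the scaling invariance of the critical Besov space $\B^{d/p}_{p,1}(\IR^d).$ First, for any $u \in \B^{d/p}_{p,1}(\IR^d),$ $\delta \in (0,1]$ and $x_0 \in \IR^d,$
\[
 \|u(\delta^{-1}(\cdot-x_0))\|_{\B^{d/p}_{p,1}} \leq C\bigl(\delta^{d/p}\|u\|_{L^p} + \|u\|_{\B^{d/p}_{p,1}}\bigr),
\]
the key point being the invariance $\|u(\delta^{-1}\cdot)\|_{\dot\B^{d/p}_{p,1}}=\|u\|_{\dot\B^{d/p}_{p,1}}$ at the scaling-critical index. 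Applied to $\psi,$ this yields a uniform bound $\sup_{\delta \in (0,1],\, x_0 \in \IR^d} \|\psi_{\delta,x_0}\|_{\B^{d/p}_{p,1}} \leq M.$

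Given $\eps > 0,$ I would pick $g \in \mathcal{S}(\IR^d)$ with $\|f-g\|_{\B^{d/p}_{p,1}} < \eps$ (density of Schwartz functions holding since the third index is $1 < \infty$) and split
\[
 \psi_{\delta,x_0}(f-f(x_0)) = \psi_{\delta,x_0}(f-g) + \psi_{\delta,x_0}(g-g(x_0)) + \psi_{\delta,x_0}(g(x_0)-f(x_0)).
\]
The first summand is bounded in $\B^{d/p}_{p,1}$ by $C\|\psi_{\delta,x_0}\|_{\B^{d/p}_{p,1}}\|f-g\|_{\B^{d/p}_{p,1}} \leq CM\eps$ thanks to Proposition~\ref{p:product} with $s=d/p$ (so that $\B^{d/p}_{p,1}$ is an algebra). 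For the third summand, the embedding $\B^{d/p}_{p,1} \hookrightarrow L^\infty$ gives $|g(x_0)-f(x_0)| \leq C\eps,$ which multiplied by $\|\psi_{\delta,x_0}\|_{\B^{d/p}_{p,1}}$ is again $\leq CM\eps.$ Both bounds are uniform in $x_0$ and $\delta \in (0,1].$

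For the middle term, I would perform the change of variable $y=\delta^{-1}(x-x_0)$ to write $\psi_{\delta,x_0}(x)(g(x)-g(x_0)) = (\psi\,G_{x_0,\delta})(\delta^{-1}(x-x_0))$ with $G_{x_0,\delta}(y):= g(x_0+\delta y) - g(x_0).$ The function $\psi G_{x_0,\delta}$ is smooth and supported in the closure of the unit ball; the identities $\partial^\alpha G_{x_0,\delta}(y) = \delta^{|\alpha|}\partial^\alpha g(x_0+\delta y)$ for $|\alpha|\geq 1$ combined with $G_{x_0,\delta}(0)=0$ yield $\|\psi G_{x_0,\delta}\|_{C^N(\IR^d)} \leq C_{g,N}\,\delta$ for all $\delta \in (0,1],$ uniformly in $x_0.$ Feeding this back into the scaling display and invoking the embedding of compactly supported $C^N$-functions into $\B^{d/p}_{p,1}$ (for $N$ large enough), one concludes $\|\psi_{\delta,x_0}(g-g(x_0))\|_{\B^{d/p}_{p,1}} \leq C(g)\,\delta,$ which tends to zero as $\delta \to 0$ uniformly in $x_0.$ Combining the three pieces gives the claim.

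The principal technical obstacle is the embedding $\{h \in C^N(\IR^d)\colon \supp h \subset \overline{B(0,1)}\} \hookrightarrow \B^{d/p}_{p,1}$ for $N$ large. This is settled by a Littlewood--Paley decomposition: Bernstein-type inequalities bound each high-frequency block by $C\,2^{-jN}\|h\|_{C^N}$ while the low-frequency block is controlled by $\|h\|_{L^p} \lesssim \|h\|_{L^\infty},$ so summing $\sum_j 2^{jd/p}\|\Delta_j h\|_{L^p}$ converges precisely when $N > d/p.$ Once this lemma is in hand, the argument above is essentially routine upon first choosing $g$ to fix the density error and then letting $\delta \to 0.$
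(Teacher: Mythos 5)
Your proof is correct and has the same overall skeleton as the paper's: approximate $f$ by a smooth $g$ to precision $\eps$, split $\psi_{\delta,x_0}(f-f(x_0))$ into the three pieces $\psi_{\delta,x_0}(g-g(x_0))$, $\psi_{\delta,x_0}(f-g)$, and $(g(x_0)-f(x_0))\psi_{\delta,x_0}$, and use the algebra property, the embedding $\B^{d/p}_{p,1}\hookrightarrow\L^\infty$, and the uniform (in $\delta\in(0,1]$ and $x_0$) bound on $\|\psi_{\delta,x_0}\|_{\B^{d/p}_{p,1}}$ to dispose of the last two pieces. The difference is in the mechanism for the middle term. The paper estimates $\|\nabla^\alpha(\psi_{\delta,x_0}(g-g(x_0)))\|_{\L^p}$ at scale $\delta$ directly (Leibniz plus the mean value theorem on the $\delta$-ball gives $O(\delta^{1+d/p-\alpha})$) and then converts these Sobolev bounds into the Besov estimate via the interpolation inequality $\|h\|_{\B^{d/p}_{p,1}}\leq C\|h\|_{\L^p}^{1-d/(p\alpha)}\|h\|_{\W^{\alpha,p}}^{d/(p\alpha)}$, getting a rate $\delta^{(1+d/p)(1-d/(p\alpha))}$. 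You instead zoom out to the unit scale by writing $\psi_{\delta,x_0}(g-g(x_0))=(\psi G_{x_0,\delta})(\delta^{-1}(\cdot-x_0))$, observe that $\|\psi G_{x_0,\delta}\|_{C^N}\lesssim\delta$ uniformly in $x_0$, and then rely on the scale invariance of the critical norm together with the embedding $C^N_c(\overline{B(0,1)})\hookrightarrow\B^{d/p}_{p,1}$ for $N>d/p$, obtaining $O(\delta)$. Both routes are sound; the paper's avoids invoking the $C^N_c$-into-Besov embedding at the cost of a weaker (but still $o(1)$) rate, while yours makes the scaling invariance structurally explicit and gives the sharper linear rate. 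One minor cosmetic point: your scaling display lists a term $\delta^{d/p}\|u\|_{\L^p}$ separately, but for $\delta\leq1$ the low-frequency pieces are already controlled by $\|u\|_{\B^{d/p}_{p,1}}$ alone (since $\|u\|_{\L^p}\lesssim\|u\|_{\B^{d/p}_{p,1}}$), so the bound is correct though slightly over-decorated.
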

    \begin{proof}
    Let us first establish the result for $g$ a smooth function with bounded derivatives at all order. Let without loss of generality $\delta \in (0 , 1)$.
    We first notice, owing to the mean value theorem and the fact that  $\psi_{\delta,x_0}$ is supported in a ball of radius $\delta$ that
    $$\|\nabla^\alpha\psi_{\delta, x_0} (g-g(x_0))\|_{\L^p}\leq C    \|\nabla g\|_{\L^\infty} \delta^{1+\frac dp-\alpha}\quad \hbox{for all } \ \alpha\in\IN.$$
    Next, we see that for any couple $(\beta,\gamma)$ of integers with $\gamma\geq1,$
    $$    \|\nabla^\beta\psi_{\delta, x_0}\:\nabla^\gamma(g-g(x_0))\|_{\L^p} \leq C   \|\nabla^\gamma g\|_{\L^\infty} \delta^{\frac dp-\beta}.$$
    Consequently, in light of Leibniz formula, for all integer $\alpha$
    there exists a constant $C_\alpha > 0$ depending only on $g$ and such that for all $x_0\in\IR^d$ and $\delta\in(0,1),$ 
   $$\|\nabla^\alpha(\psi_{\delta, x_0} \,(g-g(x_0)))\|_{\L^p}\leq C_\alpha \delta^{\frac dp+1-\alpha}.$$ 
    If $\alpha \in \IN$ is such that $d/p < \alpha \leq d/p + 1$, the exponent of $\delta$ in the previous inequality is non-negative. Moreover, combining the derived estimates with the following interpolation inequality 
    $$    \|h\|_{\B^{d/p}_{p,1}}\leq C\|h\|_{\L^p}^{1-\frac d{p\alpha}}\|h\|_{\W^{\alpha,p}}^{\frac d{p\alpha}}$$
    and the assumption that $\delta < 1$ yields that there exists  a constant $C_g > 0$ depending only on $g,$ $p$ and $d$ such that 
    \begin{equation}\label{eq:psi}\|\psi_{\delta, x_0} \, (g-g(x_0))\|_{\B^{d/p}_{p,1}}\leq C_g\delta^{(1 + \frac{d}{p})(1 - \frac{d}{p \alpha})}\quad\hbox{for all }\ \delta\in(0,1)\andf x_0\in\IR^d.
    \end{equation}
    Let us now prove the proposition for a general function $f$ in $\B^{d/p}_{p,1}.$ Fix some $\eps>0$ and 
    take $g$ smooth with bounded derivatives at all order such that $\|f-g\|_{\B^{d/p}_{p,1}}\leq\eps.$
    We have 
    $$
    \|\psi_{\delta, x_0} \, (f-f(x_0))\|_{\B^{d/p}_{p,1}}\leq
    \|\psi_{\delta, x_0} \, (g-g(x_0))\|_{\B^{d/p}_{p,1}} +   \|\psi_{\delta, x_0} \,(f-g)\|_{\B^{d/p}_{p,1}} +|f(x_0)-g(x_0)|
    \| \psi_{\delta,x_0} \|_{\B^{d/p}_{p,1}}.$$
    Using Proposition~\ref{p:product}, Inequality~\eqref{eq:psi} and the embedding $\B^{d/p}_{p,1}\hookrightarrow \L^\infty,$ we thus have
    $$\|\psi_{\delta, x_0} \, (f-f(x_0))\|_{\B^{d/p}_{p,1}}\leq C_g\delta^{(1 + \frac{d}{p})(1 - \frac{d}{p \alpha})} +  C\|\psi_{\delta, x_0}\|_{\B^{d/p}_{p,1}}\|f-g\|_{\B^{d/p}_{p,1}}.$$
        Using the invariance (up to an harmless constant) of the norm in $\B^{d/p}_{p,1}(\IR^d)$ by translation and dilation, 
        and the definition of $g,$ we end up with 
         $$    \|\psi_{\delta, x_0} \, (f-f(x_0))\|_{\B^{d/p}_{p,1}}\leq  C_g\delta^{(1 + \frac{d}{p})(1 - \frac{d}{p \alpha})} +  C\eps,$$
         which ensures    $$    \|\psi_{\delta, x_0} \, (f-f(x_0))\|_{\B^{d/p}_{p,1}}\leq 2 C\eps$$ provided $\delta$ is small enough.     
  \end{proof}
\end{appendix}


\begin{bibdiv}
\begin{biblist}

\bibitem{Agmon_Douglis_Nirenberg}
\textsc{S.~Agmon}, \textsc{A.~Douglis}, and \textsc{L.~Nirenberg}.
\newblock {\em Estimates near the boundary for solutions of elliptic partial differential equations satisfying general boundary conditions. II\/}.
\newblock Comm.\@ Pure Appl.\@ Math.\@ \textbf{17} (1964), 35--92.

\bibitem{Amann}
\textsc{H.~Amann}.
\newblock {\em Nonhomogeneous linear and quasilinear elliptic and parabolic boundary value problems\/}.
\newblock In: Function spaces, differential operators and nonlinear analysis, H.\@ Schmeisser, H.\@ Triebel (Eds.) 1993, pp.~9--126.

\bibitem{Bahouri_Chemin_Danchin}
\textsc{H.~Bahouri}, \textsc{J.-Y.~Chemin}, and \textsc{R.~Danchin}.
\newblock Fourier analysis and nonlinear partial differential equations. Grundlehren der mathematischen Wissenschaften.
\newblock No.~343, Springer, Heidelberg, 2011.

\bibitem{Arendt_Batty_Hieber_Neubrander}
\textsc{W.~Arendt}, \textsc{C.~J.~K. Batty}, \textsc{M.~Hieber}, and \textsc{F.~Neubrander}.
\newblock Vector-valued {L}aplace {T}ransforms and {C}auchy {P}roblems. Monographs in Mathematics, vol.~96,
\newblock Birkh{\"a}user, Basel-Boston-Berlin, 2001.

\bibitem{Charve_Danchin}
\textsc{F.~Charve}, \textsc{R.~Danchin}.
\newblock {\em A global existence result for the compressible Navier-Stokes
equations in the critical  $L^p$ framework\/}.
\newblock Arch. Rational Mech. Anal. \textbf{198} (2010), 233--271.

\bibitem{Chen_Miao_Zhang}
\textsc{Q.~Chen},  \textsc{C.~Miao}, \textsc{Z.~Zhang}.
\newblock {\em Global well-posedness for compressible
Navier-Stokes equations with highly oscillating initial data\/}.
\newblock Comm. Pure Appl. Math. \textbf{63} (2010), no.~9,  1173--1224.

\bibitem{Danchin0}
\textsc{R.~Danchin}.
\newblock {\em Global existence in critical spaces for compressible  Navier--Stokes equations\/}.
\newblock Inventiones Mathematicae \textbf{141} (2000), no.~3, 579--614.

\bibitem{Danchin}
\textsc{R.~Danchin}.
\newblock {\em On the solvability of the compressible Navier--Stokes system in bounded domains\/}.
\newblock Nonlinearity \textbf{23} (2010), 383--407.

\bibitem{D-Fourier}
\textsc{R.~Danchin}.
\newblock {\em A Lagrangian approach for the compressible Navier-Stokes equations\/}.
\newblock Annales de l'Institut Fourier \textbf{64} (2014), no.~2, 753--791.

\bibitem{D-Chambery}
\textsc{R.~Danchin}.
\newblock {\em Fourier analysis methods for compressible flows\/}.
\newblock Panoramas \& Synth\`eses \textbf{49} (2016), 43--106.



\bibitem{DHMT}
\textsc{R.~Danchin}, \textsc{M.~Hieber}, \textsc{P.B.~Mucha} and \textsc{P.~Tolksdorf}.
\newblock {\em Free Boundary Problems via Da Prato-Grisvard Theory\/}.
\newblock arXiv:2011.07918.

\bibitem{DaPrato-Grisvard}
\textsc{G.~Da Prato} and \textsc{P.~Grisvard}.
\newblock {\em Sommes d'op\'erateurs lin\'eaires et \'equations diff\'erentielles op\'erationelles\/}.
\newblock J.\@ Math.\@ Pures Appl.\@ (9) \textbf{54} (1975), no.~3, 305--387.

\bibitem{Denk_Hieber_Pruess}
\textsc{R.~Denk}, \textsc{M.~Hieber}, and \textsc{J.~Pr\"uss}.
\newblock {\em $\cR$-boundedness, Fourier multipliers and problems of elliptic and parabolic type\/}.
\newblock Mem.\@ Amer.\@ Math.\@ Soc.\@ \textbf{166} (2003), no.~788.

\bibitem{Dore}
\textsc{G.~Dore}.
\newblock {\em Maximal regularity in $L^p$ spaces for an abstract Cauchy problem\/}.
\newblock Adv.\@ Differential Equations \textbf{5} (2000), no.~1-3, 293--322.

\bibitem{Engel_Nagel}
\textsc{K.-J.~Engel} and \textsc{R.~Nagel}.
\newblock One-parameter semigroups for linear evolution equations. Graduate Texts in Mathematics, vol.~194.
\newblock Springer, New York, 2000.

\bibitem{Farwig_Sohr}
\textsc{R.~Farwig} and \textsc{H.~Sohr}.
\newblock {\em Generalized resolvent estimates for the Stokes system in bounded and unbounded domains\/}.
\newblock J.\@ Math.\@ Soc.\@ Japan \textbf{46} (1994), no.~4, 607--643.

\bibitem{FK} 
\textsc{H.~Fujita} and \textsc{T.~Kato}.
\newblock {\em On the Navier-Stokes initial value problem I\/}.
\newblock  Archive for Rational Mechanics and Analysis  \textbf{16} (1964),  269--315.

\bibitem{Geissert_Heck_Hieber}
\textsc{M.~Gei\ss{}ert}, \textsc{H.~Heck}, and \textsc{M.~Hieber}.
\newblock {\em On the equation $\divergence u = g$ and Bogovski\u{\i}'s operator in Sobolev spaces of negative order\/}.
\newblock Partial differential equations and functional analysis, 113--121, Oper.\@ Theory Adv.\@ Appl.\@ \textbf{168}, Birkh\"auser, Basel, 2006.

\bibitem{Haase}
\textsc{M.~Haase}.
\newblock The {F}unctional {C}alculus for {S}ectorial {O}perators. Operator Theory: Advances and Applications, vol.~169,
\newblock Birkh{\"a}user, Basel, 2006.

\bibitem{Janson_Nilsson_Peetre}
\textsc{S.~Janson}, \textsc{P.~Nilsson}, and \textsc{J.~Peetre}.
\newblock {\em Notes on Wolff's note on interpolation spaces. With an appendix by Misha Zafran\/}.
\newblock Proc.\@ London Math.\@ Soc.\@ (3) \textbf{48} (1984), no.~2, 283--299.

\bibitem{Kotschote} 
\textsc{M. Kostchote}.
\newblock{\em Dynamical Stability of Non-Constant Equilibria for the Compressible Navier-Stokes Equations in Eulerian Coordinates\/}.
\newblock  Communications in Math. Phys. \textbf{328} (2014), 809--847.

\bibitem{Kozono_Sohr}
\textsc{H. Kozono} and \textsc{H. Sohr}.
\newblock {\em New a priori estimates for the Stokes equations in exterior domains\/}.
\newblock Indiana Univ.\@ Math.\@ J.\@ \textbf{40} (1991), no.~1, 1--27.

\bibitem{Kunstmann_Weis}
\textsc{P.C. Kunstmann} and \textsc{L.~Weis}.
\newblock {\em Maximal {$L_p$}-regularity for parabolic equations, {F}ourier multiplier theorems and {$H^\infty$}-functional calculus\/}.
\newblock In Functional analytic methods for evolution equations, Lecture Notes in Mathematics, vol.~1855, Springer, Berlin, 2004, 65--311.

\bibitem{McLean}
\textsc{W.~McLean}.
\newblock Strongly elliptic systems and boundary integral equations.
\newblock Cambridge University Press, Cambridge, 2000.

\bibitem{Matsumura_Nishida}
\textsc{A.~Matsumura}, and \textsc{T.~Nishida}.
\newblock{\em The initial value problem for the equations of motion of viscous
and heat-conductive gases\/}. 
\newblock J. Math. Kyoto Uni. \textbf{20} (1980), 67--104.

\bibitem{MMM} 
\textsc{D.~Mitrea}, \textsc{M.~Mitrea},  and \textsc{S.~Monniaux}.
\newblock  {\em The Poisson problem for the exterior derivative operator with Dirichlet boundary condition in nonsmooth domains\/.} 
\newblock Commun. Pure Appl. Anal.  \textbf{7} (2008), no.~6,   1295--1333.

\bibitem{Mitrea_Monniaux}
\textsc{M.~Mitrea} and \textsc{S.~Monniaux}.
\newblock {\em Maximal regularity for the Lam\'e system in certain classes of non-smooth domains\/}.
\newblock J.\@ Evol.\@ Equ.\@ \textbf{10} (2010), no.~4, 811--833.

\bibitem{Mucha} 
\textsc{P.B.~Mucha}. 
\newblock {\em The Cauchy problem for the compressible Navier-Stokes equations in the $L_p$
 -framework\/}. 
\newblock Nonlinear Anal., {\bf 52} (2003), no.~4, 1379--1392.

\bibitem{MZ02}
\textsc{P.B.~Mucha} and \textsc{W.~ Zaj\c aczkowski}.
\newblock {\em On a $L_p$-estimate for the linearized compressible Navier-Stokes equations with the Dirichlet boundary conditions. J. Differential Equations\/}. {\bf 186} (2002), no.~2, 377--393.

\bibitem{MZ04}
\textsc{P.B.~Mucha} and \textsc{W.~ Zaj\c aczkowski}.
\newblock{\em Global existence of solutions of the Dirichlet problem for the compressible Navier-Stokes equations\/}. 
\newblock Z. Angew. Math. Mech. \textbf{84} (2004), no.~6, 417--424.

\bibitem{Nash}
\textsc{J.~Nash}.
\newblock{\em Le probl\`eme de Cauchy pour les \'equations diff\'erentielles d'un fluide g\'en\'eral\/}.
\newblock Bulletin de la Soc. Math. de France \textbf{90} (1962), 487--497.

\bibitem{Serrin} 
\textsc{J.~Serrin}.
\newblock {\em On the uniqueness of compressible fluid motions\/}. 
\newblock Archiv. Ration. Mech. Anal. \textbf{3} (1959), 271--288. 

\bibitem{Solonnikov}
\textsc{V.~Solonnikov}.
\newblock {\em Solvability of the initial boundary value problem for the equations of motion of a viscous
compressible fluid\/}.
\newblock  J. Sov. Math. \textbf{14} (1980),  1120--1132

\bibitem{Strohmer} 
\textsc{G. Str\"ohmer}.
\newblock {\em About compressible viscous fluid flow in a bounded region\/}.
\newblock Pac. J. Math. \textbf{143} (1990), 359--375.

\bibitem{Tolksdorf}
\textsc{P.~Tolksdorf}.
\newblock {\em $\cR$-sectoriality of higher-order elliptic systems on general bounded domains\/}.
\newblock J.\@ Evol.\@ Equ.\@ \textbf{18} (2018), no.~2, 323--349.

\bibitem{Triebel}
\textsc{H.~Triebel}.
\newblock Interpolation {T}heory, {F}unction {S}paces, {D}ifferential
  {O}perators.  North-Holland Mathematical Library, vol.~18,
\newblock North-Holland Publishing, Amsterdam, 1978.

\bibitem{Triebel_Lipschitz}
\textsc{H.~Triebel}.
\newblock {\em Function spaces in Lipschitz domains and on Lipschitz manifolds. Characteristic functions as pointwise multipliers}. 
\newblock Rev.\@ Mat.\@ Comlut.\@ \textbf{15} (2002), no.~2, 475--524.

\bibitem{Wolff}
\textsc{T.~H.~Wolff}.
\newblock {\em A note on interpolation spaces\/}.
\newblock In: Harmonic analysis (Minneapolis, Minn., 1981), pp.~199--204, vol.~908, Springer, Berlin-New York, 1982.

\end{biblist}
\end{bibdiv}

\end{document}